\def\isarxiv{1}
\def\bE{\mathbb{E}}
\def\bP{\mathbb{P}}
\def\bR{\mathbb{R}}
\def\bZ{\mathbb{Z}}
\def\cA{\mathcal{A}}
\def\cN{\mathcal{N}}
\def\cX{\mathcal{X}}
\def\cY{\mathcal{Y}}
\def\cZ{\mathcal{Z}}
\DeclareFontFamily{U}{mathx}{}
\DeclareFontShape{U}{mathx}{m}{n}{<-> mathx10}{}
\DeclareSymbolFont{mathx}{U}{mathx}{m}{n}
\DeclareMathAccent{\widehat}{0}{mathx}{"70}
\DeclareMathAccent{\widecheck}{0}{mathx}{"71}
\def\wh{\widehat}
\def\wt{\widetilde}
\newcommand{\circnum}[1]{%
  \text{\ding{\the\numexpr #1+191}}%
}
\DeclareMathOperator\BP{\mathrm{BP}}
\DeclareMathOperator\BOHT{\mathrm{BOHT}}
\DeclareMathOperator\BSC{\mathrm{BSC}}
\DeclareMathOperator\HSBM{\mathrm{HSBM}}
\DeclareMathOperator\Id{\mathrm{Id}}
\DeclareMathOperator\KL{\mathrm{KL}}
\DeclareMathOperator\Pois{\mathrm{Pois}}
\DeclareMathOperator\Unif{\mathrm{Unif}}
\newtheorem{theorem}{Theorem}
\newtheorem{lemma}[theorem]{Lemma}
\newtheorem{proposition}[theorem]{Proposition}
\newtheorem{corollary}[theorem]{Corollary}
\theoremstyle{definition}
\newtheorem{condition}[theorem]{Condition}
\begin{document}

\ifdefined\isarxiv
\title{Community detection in the hypergraph stochastic block model and reconstruction on hypertrees}
\date{}
\author{
Yuzhou Gu\thanks{\texttt{yuzhougu@ias.edu}. Institute for Advanced Study.}
\and
Aaradhya Pandey\thanks{\texttt{ap9898@princeton.edu}. Princeton University.}
}
\else
\title[Community detection in the HSBM and reconstruction on hypertrees]{Community detection in the hypergraph stochastic block model and reconstruction on hypertrees}
\coltauthor{%
 \Name{Yuzhou Gu} \Email{yuzhougu@ias.edu}\\
 \addr Institute for Advanced Study
 \AND
 \Name{Aaradhya Pandey} \Email{ap9898@princeton.edu}\\
 \addr Princeton University%
}
\fi

\maketitle

\begin{abstract}
  We study the weak recovery problem on the $r$-uniform hypergraph stochastic block model ($r$-HSBM) with two balanced communities. In this model, $n$ vertices are randomly divided into two communities, and size-$r$ hyperedges are added randomly depending on whether all vertices in the hyperedge are in the same community. The goal of weak recovery is to recover a non-trivial fraction of the communities given the hypergraph.
  \cite{pal2021community,stephan2022sparse} established that weak recovery is always possible above a natural threshold called the Kesten-Stigum (KS) threshold.
  For assortative models (i.e., monochromatic hyperedges are preferred), \cite{gu2023weak} proved that the KS threshold is tight if $r\le 4$ or the expected degree $d$ is small.
  For other cases, the tightness of the KS threshold remained open.

  In this paper we determine the tightness of the KS threshold for a wide range of parameters.
  We prove that for $r\le 6$ and $d$ large enough, the KS threshold is tight. This shows that there is no information-computation gap in this regime and partially confirms a conjecture of \cite{angelini2015spectral}.
  On the other hand, we show that for $r\ge 5$, there exist parameters for which the KS threshold is not tight.
  In particular, for $r\ge 7$, the KS threshold is not tight if the model is disassortative (i.e., polychromatic hyperedges are preferred) or $d$ is large enough.
  This provides more evidence supporting the existence of an information-computation gap in these cases.


  Furthermore, we establish asymptotic bounds on the weak recovery threshold for fixed $r$ and large $d$. We also obtain a number of results regarding the broadcasting on hypertrees (BOHT) model, including the asymptotics of the reconstruction threshold for $r\ge 7$ and impossibility of robust reconstruction at criticality.
\end{abstract}

\ifdefined\isarxiv
\else
\begin{keywords}%
hypergraph stochastic block model, weak recovery, reconstruction on hypertrees, information-computation gap%
\end{keywords}
\fi


\section{Introduction} \label{sec:intro}
\subsection{Hypergraph stochastic block model} \label{sec:intro:hsbm}
The stochastic block model (SBM) is a random graph model with community structures. It is the canonical model for studying community detection, and has applications in statistics, machine learning, and network science. The theoretical study of community detection in the SBM has received a lot of attention in the past decade since \cite{decelle2011asymptotic}; see \cite{abbe2017community} for a survey.

The hypergraph stochastic block model (HSBM) is a generalization of the SBM to hypergraphs. In this paper, we study the $r$-uniform HSBM with two symmetric communities, defined as follows.
Let $n\in \bZ_{\ge 0}$ denote the number of vertices and $V=[n]$ be the set of vertices\footnote{We use the notation $[n]=\{1,\ldots,n\}$.}.
Let $r\in \bZ_{\ge 2}$ be the hyperedge size and $a,b\in \bR_{\ge 0}$ be two parameters\footnote{While several previous works (e.g., \cite{pal2021community,zhang2022sparse,gu2023weak}) assume that $a\ge b$, we do not make this assumption.}.
Generate a random label $X_u\in \{\pm\}$ for all vertices $u\in V$ i.i.d.~$\sim \Unif(\{\pm\})$.
For every subset $S\subseteq V$ of size $r$, the hyperedge $S$ is added with probability $\frac a{\binom n{r-1}}$ if all vertices in $S$ have the same label; otherwise add the hyperedge with probability $\frac b{\binom n{r-1}}$. The model outputs $(X,G)$, where $G$ is the resulting hypergraph. We denote the model as $\HSBM(n,r,a,b)$.

We define two useful derived parameters $d$ and $\lambda$.
\begin{align} \label{eqn:hsbm-params-d-lambda}
  d = \frac{a-b + 2^{r-1} b}{2^{r-1}}, \qquad \lambda = \frac{a-b}{a-b+2^{r-1} b}.
\end{align}
The expected degree of every vertex is $d \pm o(1)$, and $\lambda$ is a strength parameter with range $\left[-\frac 1{2^{r-1}-1}, 1\right]$.

For the constant degree regime we consider, the most relevant community detection question is the weak recovery problem, which asks to recover a non-trivial fraction of the community labels (up to a global sign flip) given the hypergraph $G$. Formally, the goal is to design a (potentially inefficient) estimator $\wh X = \wh X(G)$ such that for a constant $c < \frac 12$,
\begin{gather}
  \lim_{n\to \infty} \bP\left[d_H(\wh X, X) \le (c+o(1))n\right] = 1,\\
  \text{where}\qquad d_H(X,Y) = \min_{s\in \{\pm\}} \sum_{u\in V} \mathbbm{1}\{X_u\ne s Y_u\}.
\end{gather}

For $r=2$, the model $\HSBM(n,2, a,b)$ becomes SBM with two symmetric communities, where the weak recovery threshold was established by \cite{massoulie2014community,mossel2015reconstruction,mossel2018proof}: weak recovery is possible if and only if $d\lambda^2 > 1$.
The threshold $d\lambda^2=1$ is called the Kesten-Stigum (KS) threshold and originates from the study of multitype Galton-Watson processes (\cite{kesten1966additional}).
For the HSBM, the natural generalization of the KS threshold is $(r-1) d \lambda^2 = 1$.
\cite{angelini2015spectral} conjectured that a phase transition for weak recovery occurs at the KS threshold.
\cite{pal2021community,stephan2022sparse} proved the positive part of the conjecture, that is, when $(r-1) d \lambda^2 > 1$, weak recovery is possible and can be achieved using an efficient (polynomial time) algorithm.
On the negative part, \cite{gu2023weak} proved that for assortative models (i.e., $a>b$), when $r=3,4$ or $\lambda\ge \frac 15$, if $(r-1) d \lambda^2 \le 1$, then weak recovery is impossible.
They also provided evidence (via studying the reconstruction on hypertrees problem, see Section~\ref{sec:intro:boht}) suggesting that for $r\ge 7$ and $d$ large enough, weak recovery might be information-theoretically possible below the KS threshold.
Nevertheless, the tightness of the KS threshold for disassortative models (i.e., $a<b$) or assortative models with $r\ge 5$ and large $d$ remained open before the current work.

Our first contribution is to show that the KS threshold is tight for $r\le 6$ and $d$ large enough.
\begin{theorem}[KS is tight for $r\le 6$ and large degree, HSBM] \label{thm:hsbm-r56}
  Consider the model $\HSBM(n,r,a,b)$ where $r\le 6$ and $a,b\in \bR_{\ge 0}$.
  There exists $d_0 = d_0(r)$ such that for all $d\ge d_0$, if $(r-1) d \lambda^2 \le 1$, then weak recovery is impossible.
\end{theorem}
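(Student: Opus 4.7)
The plan has two main components: first, reduce HSBM weak recovery to non-reconstruction on the broadcasting on hypertrees (BOHT) model; second, establish BOHT non-reconstruction at or below the KS threshold for $r \le 6$ and large $d$. For the first step I would invoke the standard local weak convergence of the depth-$h$ neighborhood of a uniformly random vertex in $\HSBM(n,r,a,b)$ to a Poisson-hypertree broadcast process. The HSBM-to-BOHT transfer used in \cite{pal2021community} and \cite{gu2023weak} then yields: if BOHT reconstruction is information-theoretically impossible, then HSBM weak recovery is impossible. This reduction is essentially independent of $r$; the $r \le 4$ restriction in \cite{gu2023weak} arose only from the BOHT step. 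It therefore suffices to prove BOHT non-reconstruction for $r \le 6$, large $d$, and $(r-1) d \lambda^2 \le 1$.

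For the BOHT non-reconstruction step, let $Z_h = \bP[X_\rho = + \mid X_{L_h}] - \bP[X_\rho = - \mid X_{L_h}]$ denote the root posterior magnetization given the labels at depth $h$. The BP recursion writes the law of $Z_{h+1}$ in terms of i.i.d.\ copies of $Z_h$: within each hyperedge the $r-1$ children are combined by a product inside a $\tanh(\lambda \arctanh(\cdot))$ envelope, and the Poisson number of hyperedges incident to the root is aggregated by a further product. Linearizing around $Z = 0$ recovers the KS eigenvalue $(r-1) d \lambda^2$. In the large-$d$ regime $Z_h$ concentrates by a law of large numbers over the Poisson family of incident hyperedges, so BP effectively collapses to a one-dimensional recursion on $q_h = \bE[Z_h^2]$ with leading behavior $q_{h+1} \approx (r-1) d \lambda^2 q_h$ and first nonlinear correction of order $q_h^{r-1}$, since each hyperedge contributes an $(r-1)$-fold product of children magnetizations.

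The crux is then to decide the sign and magnitude of the $q^{r-1}$ correction at the KS threshold. A direct Taylor-coefficient computation of the limiting BP map $F$ should show that for $r \le 6$ the correction points toward $0$, so $F(q) < q$ on a neighborhood of the origin and iteration forces $q_h \to 0$, while for $r \ge 7$ it has the opposite sign, producing a non-trivial fixed point and hence positive reconstruction measure (consistent with the companion $r \ge 7$ result announced in the paper). The main obstacle is making the Gaussian / one-dimensional reduction rigorous exactly at the critical point $(r-1) d \lambda^2 = 1$, with error terms vanishing uniformly in $h$ and fast enough in $d$ to be dominated by the higher-order correction. The borderline case $r = 6$, one step below the sign flip at $r = 7$, requires particularly delicate uniform control on the Taylor coefficients of $F$ as $d \to \infty$; carrying out this Taylor analysis with quantitative remainder bounds is where the bulk of the technical work lies.
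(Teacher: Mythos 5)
Your high-level architecture is right and matches the paper: reduce HSBM weak recovery to BOHT non-reconstruction via the local weak limit, then prove non-reconstruction for $r\le 6$ in the large-$d$ regime via Sly's method (large-degree Gaussian asymptotics for the $\chi^2$-capacity recursion). However, there are two significant errors that make the proposal fail as stated.

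\textbf{The first nonlinear correction is at order $q_h^2$, not $q_h^{r-1}$.} The ``$(r-1)$-fold product of children magnetizations'' intuition does not translate into an $(r-1)$-th order correction. Each child magnetization is of size $O(\sqrt{q_h})$, and when you compute $C_{\chi^2}$ of the composed channel the dominant contributions come from diagonal (second-moment) terms, with the next order coming from pairs. The paper's central expansion (Eq.~\eqref{eqn:robust-recon-main-expansion}) gives
\begin{align}
  x_{k+1} = (r-1)d\lambda^2 x_k - \left((r-1)(r-2)d\lambda^3 + (r-1)^2 \left(\bE_{b\sim D} b(b-1)\right)\lambda^4\right) x_k^2 + O_r(x_k^3),
\end{align}
a quadratic correction for every $r\ge 2$. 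Consequently your proposed sign-of-the-$q^{r-1}$-coefficient criterion is not what distinguishes $r\le 6$ from $r\ge 7$: the paper shows the coefficient of $x_k^2$ is \emph{positive (contracting) for all $r\ge 2$} at and below KS. The $r\le 6$ vs.\ $r\ge 7$ dichotomy arises from the \emph{global} behavior of the Gaussian-limit map $g_{r,w}$ over the whole interval $(0,1]$ (Lemma~\ref{lem:large-step-g-contract-r56}): for $r\le 6$ at $w=\frac 1{r-1}$, $g_{r,w}(x)<x$ for all $x\in(0,1]$; for $r\ge 7$ the curve crosses the diagonal somewhere in $(0,1]$, giving a non-trivial fixed point and hence reconstruction. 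A numerical verification on $(0,1]$ is required; local Taylor coefficients at $0$ do not decide this.

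\textbf{The Gaussian reduction alone cannot close the argument near $q=0$, and the proposal omits the robust-reconstruction half of Sly's method.} The Gaussian approximation error in Proposition~\ref{prop:large-deg-asymp} is a fixed $\epsilon$ (small once $d$ is large), but it does \emph{not} vanish as $q_h\to 0$. So it cannot be ``dominated by the higher-order correction'' once $q_h$ is below that $\epsilon$; the approximation loses the contraction exactly where you need it. The paper handles this by a separate, non-asymptotic robust-reconstruction argument (Theorem~\ref{thm:boht-robust}): once $C_{\chi^2}$ is below a fixed $\epsilon(r)$, an exact moment expansion drives it to zero. This step is also not a one-dimensional recursion: since $C_{\chi^2}(\BP(P))$ is not a function of $C_{\chi^2}(P)$ alone, the paper tracks both $x_k=\bE\theta^2$ and $y_k=\bE\theta^4$, first establishing an a priori bound $y_k=O(x_k^2)$ (the $\gamma$-normality) before the fine expansion can be trusted. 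Without this second piece, the argument has an unbridgeable gap between ``$q_h$ drops below $\epsilon$'' and ``$q_h\to 0$''.
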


Our second contribution is to provide an upper bound for HSBM weak recovery.
For $x,y\in [0,1]$, define the binary Kullback-Leibler (KL) divergence function $d_{\KL}(\cdot \| \cdot)$ as\footnote{Throughout the paper, $\log$ denotes natural logarithm.}
\begin{align} \label{eqn:binary-kl-div-func}
  d_{\KL}(x \| y) = x \log \frac xy + (1-x) \log \frac {1-x}{1-y}.
\end{align}
\begin{theorem}[HSBM weak recovery upper bound] \label{thm:hsbm-upper}
  Consider the model $\HSBM(n,r,a,b)$ with $a,b\in \bR_{\ge 0}$.
  If
  \begin{align} \label{eqn:thm-hsbm-upper:cond}
    \frac dr \cdot d_{\KL}\left( \lambda + \frac{1-\lambda}{2^{r-1}} \left\| \frac 1{2^{r-1}} \right. \right) > \log 2,
  \end{align}
  then weak recovery is possible.
\end{theorem}
Comparing Eq.~\eqref{eqn:thm-hsbm-upper:cond} with the KS threshold $(r-1) d \lambda^2 = 1$, we obtain that the KS threshold is not tight for a wide range of parameters.
\begin{corollary}[Non-tightness of the KS threshold for $r\ge 5$, HSBM] \label{coro:hsbm-upper}
  Work in the setting of Theorem~\ref{thm:hsbm-upper}.
  If $r\ge 5$, then there exists $\lambda_0=\lambda_0(r) \in \left(-\frac 1{2^{r-1}-1}, 1\right]$ such that for any non-zero $\lambda\in \left[-\frac 1{2^{r-1}-1}, \lambda_0\right)$, there exists $d$ satisfying $(r-1) d \lambda^2<1$ such that weak recovery is possible.
  Furthermore, for $r\ge 7$ we can take $\lambda_0(r) > 0$.
\end{corollary}
In Corollary~\ref{coro:hsbm-upper}, we can take $\lambda_0(r)$ to be the minimum solution in $\left[-\frac 1{2^{r-1}-1}, 1\right]$ to the equation
\begin{align} \label{eqn:hsbm-upper-ks}
  d_{\KL}\left( \lambda + \frac{1-\lambda}{2^{r-1}} \left\| \frac 1{2^{r-1}} \right. \right) = r(r-1)\lambda^2 \log 2.
\end{align}
Table~\ref{tab:hsbm-upper} presents numerical values of $\lambda_0(r)$ for small $r$.

\begin{table}[ht]
  \centering
  \begin{tabular}{|c|c|c|c|c|c|c|} \hline
    $r$ & $5$ & $6$ & $7$ & $8$ & $9$ & $10$\\ \hline
    $\lambda_0$ & $-0.06575$ & $-0.02154$ & $0.00413$ & $0.01920$ & $0.02807$ & $0.03320$ \\ \hline
  \end{tabular}
  \caption{Minimum solution to Eq.~\eqref{eqn:hsbm-upper-ks} for $5\le r\le 10$, rounded down to the nearest multiple of $10^{-5}$. For non-zero $\lambda\in \left[-\frac 1{2^{r-1}-1}, \lambda_0\right)$, the KS threshold is not tight. See Corollary~\ref{coro:hsbm-upper} for the complete statement.}
  \label{tab:hsbm-upper}
\end{table}

Furthermore, we obtain the following asymptotic lower and upper bounds for the HSBM weak recovery threshold.
\begin{theorem}[Asymptotic HSBM weak recovery bounds] \label{thm:hsbm-asymp}
  Consider the model $\HSBM(n,r,a,b)$ with $r\ge 2$ and $a,b\in \bR_{\ge 0}$.
  \begin{enumerate}[label=(\roman*)]
    \item (Upper bound) For any $\epsilon>0$, there exists $d_0=d_0(r,\epsilon)$, such that for all $d\ge d_0$, if $d \lambda^2 \ge \frac{2r\log 2}{2^{r-1}-1} + \epsilon$, then weak recovery is possible. \label{item:thm-hsbm-asymp:upper}
    \item (Lower bound) For any $\epsilon>0$, there exists $d_0=d_0(r,\epsilon)$, such that for all $d\ge d_0$, if $d \lambda^2 \le \frac 1{2^{r-2}} - \epsilon$, then weak recovery is impossible. \label{item:thm-hsbm-asymp:lower}
  \end{enumerate}
\end{theorem}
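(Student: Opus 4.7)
The overall strategy is to reduce both parts of Theorem~\ref{thm:hsbm-asymp} to corresponding bounds on the reconstruction threshold of the Broadcasting-on-Hypertrees (BOHT) model, and then analyze the BOHT density-evolution recursion in the large-$d$ regime. In the local-weak sense, the neighborhood of a uniformly random vertex in $\HSBM(n,r,a,b)$ converges to a Poisson$(d)$-offspring Galton-Watson hypertree with broadcast channel $K$ that sends parent label $\sigma$ to the all-$\sigma$ children-configuration with probability $\alpha=a/(a+(2^{r-1}-1)b)$ and to each of the other $2^{r-1}-1$ children-configurations with probability $\beta=b/(a+(2^{r-1}-1)b)$, so that $\alpha-\beta=\lambda$. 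By the standard arguments of \cite{pal2021community,gu2023weak}, BOHT reconstruction possibility (resp.\ impossibility) transfers to HSBM weak recovery possibility (resp.\ impossibility), so it suffices to prove the asymptotic bounds on the BOHT reconstruction threshold.

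For part~\ref{item:thm-hsbm-asymp:upper}, the plan is to exhibit a non-trivial stable fixed point of the BP density-evolution recursion on BOHT when $d\lambda^2\ge\frac{2r\log 2}{2^{r-1}-1}+\epsilon$. Since $\lambda=\Theta(1/\sqrt{d})\to 0$ when $d\to\infty$ with $d\lambda^2$ fixed, the kernel $K$ is a small perturbation of the uniform distribution on $\{\pm\}^{r-1}$, and the density evolution admits a perturbative / second-moment analysis. A second-moment computation on the Bethe partition function of a truncated hypertree compares the free energy of the trivial symmetric fixed point against that of a polarized candidate fixed point, and the threshold $\frac{2r\log 2}{2^{r-1}-1}$ emerges as the free-energy crossing: heuristically, the numerator $2r\log 2$ is the entropic cost of freezing the spins of an $r$-vertex hyperedge into a correlated configuration, while the denominator $2^{r-1}-1$ is the effective per-hyperedge signal gain counting the non-monochromatic patterns.

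For part~\ref{item:thm-hsbm-asymp:lower}, the plan is a $\chi^2$-contraction along tree generations. Let $\nu^{(k)}_\sigma$ denote the depth-$k$ leaf distribution given root $\sigma$, set $\bar\nu^{(k)}=\tfrac12(\nu^{(k)}_++\nu^{(k)}_-)$, and define $c_k=\chi^2(\nu^{(k)}_+,\bar\nu^{(k)})$. Only the all-$+$ and all-$-$ children-configurations (out of the $2^{r-1}$ possibilities) have asymmetric probabilities under parent $+$ versus $-$, so a direct computation shows the per-hyperedge $\chi^2$-contribution equals
\begin{align}
  \tfrac{\lambda^2}{2}\bigl((1+c_k)^{r-1}-(1-c_k)^{r-1}\bigr)
\end{align}
to leading order in $\lambda$. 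By convexity of $c\mapsto(1+c)^{r-1}-(1-c)^{r-1}$ on $[0,1]$ (the chord inequality gives $(1+c)^{r-1}-(1-c)^{r-1}\le 2^{r-1}c$), this per-hyperedge $\chi^2$ is bounded by $2^{r-2}\lambda^2 c_k$. Summing over the Poisson$(d)$ hyperedges and carefully handling the tensorization then yields the recursion $c_{k+1}\le d\cdot 2^{r-2}\lambda^2\cdot c_k\cdot(1+o_d(1))$, which forces $c_k\to 0$ exponentially under the hypothesis $d\lambda^2\le\frac{1}{2^{r-2}}-\epsilon$, giving non-reconstruction on BOHT and hence impossibility of HSBM weak recovery.

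The main obstacle is the upper bound: the naive linearized density-evolution analysis recovers only the Kesten--Stigum threshold $d\lambda^2=\frac{1}{r-1}$, and obtaining the sharper asymptotic $\frac{2r\log 2}{2^{r-1}-1}$ requires genuinely non-linear control of the recursion via identification of a polarized candidate fixed point and a tight second-moment verification of its stability; this is where the logarithm in the threshold comes from. The lower bound, by contrast, reduces to the convexity-based $\chi^2$-contraction above, which is technically more standard but still requires careful bookkeeping to extract the sharp constant $2^{r-2}$ (rather than a loose $2^{r-1}$) from the hyperedge kernel.
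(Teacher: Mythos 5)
Your lower-bound plan points in roughly the right direction, but your upper-bound plan rests on a reduction that is false, and even the lower-bound recursion is missing a key ingredient.

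\textbf{Upper bound.} You claim that ``BOHT reconstruction possibility transfers to HSBM weak recovery possibility.'' This implication is \emph{not} true, and the paper explicitly warns against it: the reduction of Theorem~\ref{thm:hsbm-boht-reduction} only goes one way, from BOHT non-reconstruction to HSBM non-recovery, and the converse fails already in the related $q$-community SBM (some parameters have tree reconstruction but no graph weak recovery). So no amount of fixed-point or free-energy analysis on the hypertree alone can prove part~\ref{item:thm-hsbm-asymp:upper}. The paper instead proves a non-asymptotic statement (Theorem~\ref{thm:hsbm-upper}) directly on the hypergraph: it searches for an almost-balanced partition whose number of in-community hyperedges is close to $an/r$, shows the true partition qualifies, and shows via Sanov's theorem plus a union bound over all $2^n$ partitions that any uncorrelated almost-balanced partition is bad a.a.s. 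The condition $\frac dr\,d_{\KL}\bigl(\lambda+\tfrac{1-\lambda}{2^{r-1}}\,\|\,\tfrac1{2^{r-1}}\bigr)>\log 2$ is exactly the point where the union bound over $2^n$ wins, and taking $\lambda\to 0$ (so $d_{\KL}\sim\tfrac{2^{r-1}-1}{2}\lambda^2$) gives the threshold $\frac{2r\log 2}{2^{r-1}-1}$. Your heuristic about the $\log 2$ being entropic is apt, but the entropy counts the $2^n$ candidate partitions of the graph, not a tree-level free energy.

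\textbf{Lower bound.} Here your reduction direction is the correct one, and the polynomial inequality $(1+c)^{r-1}-(1-c)^{r-1}\le 2^{r-1}c$ (equivalently $s_{r,w}(x)\le w\,2^{r-2}\,x$, which in the paper follows from noting $\tfrac{s_{r,w}(x)}{x}$ is a polynomial with non-negative coefficients and so is maximized at $x=1$; you don't actually need convexity, which in fact fails for $r=3$) is indeed the computation the paper uses to show $w^*(r)\ge\tfrac1{2^{r-2}}$ in Section~\ref{sec:boht:hsbm}. However, your claimed recursion $c_{k+1}\le d\cdot 2^{r-2}\lambda^2\,c_k\,(1+o_d(1))$ does not hold as a multiplicative statement. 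The large-degree analysis (Proposition~\ref{prop:large-deg-asymp}) gives an \emph{additive} error: $|C_{\chi^2}(\BP(P))-g_{r,d\lambda^2}(C_{\chi^2}(P))|\le\epsilon$, where $g_{r,w}(x)=\bE_Z\tanh(s_{r,w}(x)+\sqrt{s_{r,w}(x)}\,Z)\le s_{r,w}(x)$. From this one only concludes $\lim_{k}C_{\chi^2}(M_k)<\epsilon$ (Prop.~\ref{prop:large-deg-asymp-r56}), not $=0$. To actually drive the $\chi^2$-capacity to zero the paper then invokes the robust reconstruction Theorem~\ref{thm:boht-robust}, which requires the whole second-moment expansion machinery (Lemmas~\ref{lem:boht-robust:a-priori}, \ref{lem:boht-robust:normal}, \ref{lem:boht-robust:fine}). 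Your plan omits this step entirely and glosses over the tensorization with ``careful bookkeeping,'' but the tensorization over $d$ hyperedges is precisely where the Gaussian averaging (the $\tanh$ integral) arises, and where the additive rather than multiplicative error enters. Also note that a uniform-in-$d$ multiplicative contraction of the form you state would, if true, yield a non-reconstruction threshold of $\tfrac1{2^{r-2}}$ for \emph{all} $d$, contradicting the known phenomenon that for $r\ge 4$ the KS threshold $\tfrac1{r-1}>\tfrac1{2^{r-2}}$ is tight at small $d$.
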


Our current knowledge about the tightness of the KS threshold for HSBM weak recovery is summarized in Table~\ref{tab:tightness-ks}. Our contribution is the ``large $d$'' column (except for the $r=3,4$, $\lambda>0$ case which was established in \cite{gu2023weak}) and all non-tightness results.

\begin{table}[ht]
  \centering
  \begin{tabular}{|c|c|c|c|} \hline
    & any $d$ &  small $d$ & large $d$\\ \hline
    $r=3,4$, $\lambda>0$ & always tight & (tight) & (tight) \\
    $r=3,4$, $\lambda<0$ & unknown & unknown & tight \\
    $r=5,6$, $\lambda>0$ & unknown & tight & tight \\
    $r=5,6$, $\lambda<0$ & unknown & not tight & tight \\
    $r\ge 7$, $\lambda>0$ & unknown & tight & not tight \\
    $r\ge 7$, $\lambda<0$ & never tight & (not tight) & (not tight) \\ \hline
  \end{tabular}
  \caption{Tightness of the KS threshold for HSBM weak recovery. An entry in parentheses means it is implied by another entry in the same row. References: Possibility above KS: \cite{pal2021community} ($\lambda>0$), \cite{stephan2022sparse} (general HSBM). Tightness for $r=3,4$, $\lambda>0$, and $r\ge 5$, $\lambda>0$, small $d$: \cite{gu2023weak}. Tightness for $r\le 6$, large $d$: Theorem~\ref{thm:hsbm-r56}. Non-tightness results: Corollary~\ref{coro:hsbm-upper}. This table is also our current knowledge about tightness of the KS threshold for BOHT reconstruction.}
  \label{tab:tightness-ks}
\end{table}

Proofs of Theorem~\ref{thm:hsbm-r56} and Theorem~\ref{thm:hsbm-asymp}\ref{item:thm-hsbm-asymp:lower} are based on the reconstruction on hypertrees problem, which we will discuss in Section~\ref{sec:intro:boht}.
Proof of Theorem~\ref{thm:hsbm-upper} uses a simple estimator which returns a balanced partition with expected in-community hyperedge density. We show that when Eq.~\eqref{eqn:thm-hsbm-upper:cond} is satisfied, any balanced partition that achieves expected in-community hyperedge density is correlated with the true partition. This method was previously used in \cite{banks2016information} in the case of $q$-community SBM.
Corollary~\ref{coro:hsbm-upper} and Theorem~\ref{thm:hsbm-asymp}\ref{item:thm-hsbm-asymp:upper} follow from Theorem~\ref{thm:hsbm-upper}.

Another interesting community detection problem for the HSBM is the testing problem, which asks to distinguish an HSBM hypergraph from an Erd\H{o}s-R\'enyi hypergraph with the same hyperedge density.
\cite{yuan2022testing} studied the testing problem for a class of symmetric HSBMs (which includes our two-community case), proving that testing (with $o(1)$ error probability) is possible above the KS threshold. They showed that the KS threshold is tight for the two-community three-uniform HSBM, and that the KS threshold is not tight in general. In fact, they proved that when Eq.~\eqref{eqn:thm-hsbm-upper:cond} holds, testing is possible.\footnote{We remark that \cite[Theorem 2.6, Part 1, second half]{yuan2022testing} is off by a factor of two because of a computation error. In particular, in the second-to-last display in the proof of Theorem 2.6, the third step performs Taylor expansion to order $\epsilon$. However, because the final result is of order $\epsilon^2$, to get the correct coefficient in the final result, one needs to perform Taylor expansion to order $\epsilon^2$.} Their results do not imply our Theorem~\ref{thm:hsbm-upper} because a direct relationship between testing and weak recovery is not known. Nevertheless, this provides evidence suggesting that there might be a close relationship between the two problems.

\subsection{Reconstruction on hypertrees} \label{sec:intro:boht}
Theorem~\ref{thm:hsbm-r56} and Theorem~\ref{thm:hsbm-asymp}\ref{item:thm-hsbm-asymp:lower} are proved by studying the reconstruction problem for the broadcasting on hypertrees (BOHT) model, defined as follows.
Let $r\in \bZ_{\ge 2}$ be the hyperedge size, $\lambda\in \left[-\frac 1{2^{r-1}-1}, 1\right]$ be a strength parameter, $D$ be a distribution on $\bZ_{\ge 0}$, which is usually taken to be the point distribution at $d\in \bZ_{\ge 0}$ or the Poisson distribution $\Pois(d)$ with expectation $d\in \bR_{\ge 0}$.
Let $T$ be the $r$-uniform Galton-Watson linear\footnote{Linear means the intersection of two different hyperedges has size at most one.} hypertree, where every vertex independently has $b\sim D$ downward hyperedges. The BOHT model assigns a $\pm$ label to every vertex according to the following process.
The root $\rho$ has label $\sigma_\rho$ generated from the uniform distribution $\pi = \Unif(\{\pm\})$.
Let $B_{r,\lambda}: \{\pm\} \to \{\pm\}^{r-1}$ be the channel
\begin{align} \label{eqn:broadcast-channel}
  B_{r,\lambda}(y_1,\ldots,y_{r-1} | x) = \left\{
    \begin{array}{ll}
      \lambda + \frac{1-\lambda}{2^{r-1}}, & \text{if}~y_1=\cdots=y_{r-1}=x,\\
      \frac{1-\lambda}{2^{r-1}}, & \text{otherwise}.
    \end{array}
  \right.
\end{align}
Suppose we have generated label $\sigma_u$ of a vertex $u$. Then for every downward hyperedge $\{u,v_1,\ldots,v_{r-1}\}$, children labels $\sigma_{v_1},\ldots,\sigma_{v_{r-1}}$ are generated according to $B_{r,\lambda}$, that is, $(\sigma_{v_1},\ldots, \sigma_{v_{r-1}}) \sim B_{r,\lambda}(\cdot | \sigma_u)$.
In this way every vertex has received a label.
The model outputs $(T,\sigma)$ and we denote the model as $\BOHT(r,\lambda,D)$. When $D$ is a point distribution at $d\in \bZ_{\ge 0}$ we denote the model as $\BOHT(r,\lambda,d)$.

The reconstruction problem is an important problem for the BOHT model. Let $(T,\sigma) \sim \BOHT(r,\lambda,D)$.
For $k\in \bZ_{\ge 0}$, let $T_k$ denote the tree structure up to the $k$-th level (the root being at the $0$-th level), and $L_k$ denote the set of vertices at the $k$-th level. We say reconstruction is possible if
\begin{align}
  \lim_{k\to \infty} I(\sigma_\rho; T_k, \sigma_{L_k})
\end{align}
is positive, and impossible otherwise. Here $I(\cdot;\cdot)$ denotes mutual information. In other words, reconstruction is possible if and only if we can reconstruct the root label $\sigma_\rho$ with $\frac 12-\Omega(1)$ error probability given the leaf labels $\sigma_{L_k}$, as $k\to \infty$.

The reconstruction problem is useful for the study of HSBM weak recovery. Consider the model $\HSBM(n,r,a,b)$. Let $d,\lambda$ be defined as in Eq.~\eqref{eqn:hsbm-params-d-lambda}. Consider the model $\BOHT(r,\lambda,\Pois(d))$, which we call the BOHT model corresponding to $\HSBM(n,r,a,b)$.
If reconstruction is impossible for $\BOHT(r,\lambda,\Pois(d))$, then weak recovery is impossible for $\HSBM(n,r,a,b)$. This relationship was first established by \cite{mossel2015reconstruction,mossel2018proof} for the $r=2$ case, and their proof can be easily generalized. See \cite[Theorem 5.15]{gu2023channel} for a proof for general HSBMs.
We note that the converse is not true in general: for the $q$-community SBM, there exist parameters where weak recovery is impossible for the SBM but reconstruction is possible for the corresponding broadcasting on trees (BOT) model.


In the $r=2$ case, the model $\BOHT(2,\lambda, D)$ is called the Ising model, where the reconstruction threshold was established by \cite{bleher1995purity,evans2000broadcasting}: reconstruction is possible if and only if $d \lambda^2 > 1$. The threshold $d \lambda^2 = 1$ is also called the Kesten-Stigum (KS) threshold.
For $r\ge 3$, the natural generalization of the KS threshold is $(r-1) d \lambda^2 = 1$.
In general, it is known that reconstruction is possible whenever $(r-1) d \lambda^2 > 1$, and the tightness of the KS threshold is an interesting question.
\cite{gu2023weak} proved that reconstruction for $\BOHT(r,\lambda,D)$ is impossible for $(r-1) d \lambda^2 \le 1$ if $r\le 4$, $\lambda>0$, or $\lambda\ge \frac 15$.
Furthermore, they proved that the KS threshold is not tight for $r\ge 7$ and $d$ large enough.
The remaining cases were left open.

Our main contribution on the BOHT model is tightness of the KS threshold for $r\le 6$ and large $d$.
\begin{theorem}[KS is tight for $r\le 6$ and large degree, BOHT] \label{thm:boht-r56}
  Consider the model $\BOHT(r,\lambda,d)$ or $\BOHT(r,\lambda,\Pois(d))$ where $r\le 6$ and $\lambda\in \left[-\frac 1{2^{r-1}-1}, 1\right]$.
  There exists $d_0 = d_0(r)$ such that for all $d\ge d_0$, if $(r-1) d \lambda^2 \le 1$, then reconstruction is impossible.
\end{theorem}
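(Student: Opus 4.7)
The natural route is through the recursion for the posterior distribution of $\sigma_\rho$ on truncated hypertrees, which is the standard framework for BOHT reconstruction. Let $X_k = 2\bP[\sigma_\rho = + \mid T_k, \sigma_{L_k}] - 1$ denote the root magnetization given the level-$k$ observations. Reconstruction is equivalent to $\bE[X_k^2]$ staying bounded away from $0$ as $k \to \infty$. The key object is the random recursion
\begin{align*}
  X_k = F_\lambda\bigl(\{X_{k-1}^{(i,j)}\}_{i \le B,\; j \le r-1}\bigr),
\end{align*}
where $B \sim D$ is the root's number of downward hyperedges and $F_\lambda$ is determined by Bayesian updating through the channel $B_{r,\lambda}$. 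The plan is to show that for $r \le 6$ and $d$ sufficiently large with $(r-1) d \lambda^2 \le 1$, this recursion is a strict contraction in the $L^2$ norm of the magnetization in a small neighborhood of $0$.

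The first step is to expand $F_\lambda$ as a polynomial in its arguments and in $\lambda$. The linear coefficient is exactly $\lambda$, and summing squared contributions over the $(r-1) B$ children reproduces the KS criterion $\bE[X_k^2] \lesssim (r-1) d \lambda^2 \, \bE[X_{k-1}^2]$. Because the linear order is marginal at the threshold, the outcome hinges on the sign of the next-order correction. Carrying the expansion one step further should yield a recursion of the schematic form
\begin{align*}
  \bE[X_k^2] = (r-1) d \lambda^2 \, \bE[X_{k-1}^2] - C(r) \, d \, \lambda^4 \, \bE[X_{k-1}^2]^2 + (\text{subleading}),
\end{align*}
where $C(r)$ is a combinatorial constant capturing how the $r-1$ children within a single hyperedge interact under $B_{r,\lambda}$. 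I would compute $C(r)$ by an explicit enumeration over subsets of $\{1, \ldots, r-1\}$, using that on each hyperedge the Bayesian update factorizes as a product over subset parities weighted by the channel probabilities in Eq.~\eqref{eqn:broadcast-channel}. The target is to verify that $C(r) > 0$ precisely in the regime $r \le 6$, which matches the sign flip implicit in Corollary~\ref{coro:hsbm-r7} and the comparison between $\frac{1}{r-1}$ and $\frac{2r \log 2}{2^{r-1}-1}$ appearing in Theorem~\ref{thm:hsbm-asymp}.

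With $C(r) > 0$ in hand, one takes $d$ large enough that the $\lambda^4$ contraction term dominates the subleading errors, giving $\bE[X_k^2] \le \bE[X_{k-1}^2] - c\,\bE[X_{k-1}^2]^2$ as soon as $\bE[X_{k-1}^2]$ lies in a small $\lambda$-dependent window. Iterating drives $\bE[X_k^2] \to 0$, proving non-reconstruction; the Poisson and fixed-degree cases reduce to the same analysis after averaging over $B$. The main obstacle will be the expansion step: one must track not only $\bE[X_k^2]$ but also higher moments and joint moments of $X_{k-1}^{(i,j)}$ across children of the same hyperedge, because naive bounds on $\bE[X_{k-1}^4]$ or on within-hyperedge correlations could easily swamp the $O(\lambda^4)$ correction and wash out the sign of $C(r)$. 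Setting up a coupled family of moment estimates that propagates robustly through the recursion, and then executing the sign computation of $C(r)$ for $r \in \{5,6\}$ (the $r \le 4$ cases being already handled in \cite{gu2023weak}), is where the real work lies.
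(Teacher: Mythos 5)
Your proposal has the right scaffolding (track the second moment of the root magnetization, expand the BP update, argue contraction near zero) and it correctly identifies the hard points (higher moments and within-hyperedge correlations could swamp the quadratic term), but the core strategy misattributes where the $r \le 6$ versus $r \ge 7$ dichotomy comes from, and this is a genuine gap that would prevent the proof from going through.

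You expect to find a combinatorial coefficient $C(r)$ in the quadratic term of the local expansion whose sign flips at $r = 7$. That is not what happens. In the paper's expansion (Eq.~\eqref{eqn:robust-recon-main-expansion}),
\begin{align*}
  x_{k+1} = (r-1) d \lambda^2 x_k - \left((r-1)(r-2) d \lambda^3 + (r-1)^2 \left(\textstyle\bE_{b\sim D} b(b-1)\right) \lambda^4\right) x_k^2 + O_r(x_k^3),
\end{align*}
the coefficient of $x_k^2$ is shown to be negative (contraction) for \emph{every} $r \ge 2$ when $(r-1)d\lambda^2$ is at or near $1$; in the proof of Theorem~\ref{thm:boht-robust} it is bounded below by $0.4$ uniformly. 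So the local Taylor expansion near $x=0$ gives robust reconstruction for all $r$, and cannot by itself distinguish $r \le 6$ from $r \ge 7$. The dichotomy actually lives in the \emph{global} behavior of the Gaussian-limit recursion $g_{r,w}$ defined in Eq.~\eqref{eqn:thm-boht-asymp-grw}: for $r \le 6$ and $w = \frac{1}{r-1}$, $g_{r,w}(x) < x$ on all of $(0,1]$ (Lemma~\ref{lem:large-step-g-contract-r56}), whereas for $r \ge 7$ the map acquires a stable nontrivial fixed point away from $0$, even though $0$ remains locally attracting. Hoping to read the phase boundary off the sign of a local coefficient is a natural idea here, but it is exactly the thing that fails.

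This connects to the second, related gap: your contraction inequality $\bE[X_k^2] \le \bE[X_{k-1}^2] - c\,\bE[X_{k-1}^2]^2$ is only claimed to hold once $\bE[X_{k-1}^2]$ is already inside a small window near $0$, but the iteration starts from the identity channel with $\bE[X_0^2] = 1$. You never explain how the iterates reach that window. The paper supplies this missing piece via Proposition~\ref{prop:large-deg-asymp} and Proposition~\ref{prop:large-deg-asymp-r56}: for $d$ large, a single BP step is uniformly well approximated by $g_{r,d\lambda^2}$, and the global inequality $g_{r,1/(r-1)}(x) < x$ on $(0,1]$ (which requires $r \le 6$) drives the $\chi^2$-capacity down into the local contraction window. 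Without that global step, your argument, even if executed perfectly, would only reprove Theorem~\ref{thm:boht-robust} (robust reconstruction, a statement valid for all $r$), not Theorem~\ref{thm:boht-r56}. The real work for $r \in \{5,6\}$ is not a sign computation of a local $C(r)$; it is establishing and exploiting the Gaussian approximation and verifying the global inequality for $g_{r,1/(r-1)}$.
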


Not coincidentally, Table~\ref{tab:tightness-ks} is also our current knowledge about tightness of the KS threshold for the reconstruction problem on $\BOHT(r,\lambda,D)$ (for $D=d$ or $D=\Pois(d)$). Possibility of reconstruction above KS is folklore by a counting estimator. Tightness for $r=3,4$, $\lambda>0$, and $r\ge 5$, $\lambda>0$, small $d$, and non-tightness for $r\ge 7$, large $d$ are by \cite{gu2023weak}. Tightness for $r\le 6$, large $d$ (except for $r=3,4$, $\lambda>0$) is by Theorem~\ref{thm:boht-r56}. Other non-tightness results are by Corollary~\ref{coro:hsbm-upper}.

Theorem~\ref{thm:hsbm-r56} follows directly from Theorem~\ref{thm:boht-r56} using the relationship between HSBM weak recovery and BOHT reconstruction ({\cite[Theorem 5.15]{gu2023channel}}).

Let us now discuss the proof of Theorem~\ref{thm:boht-r56}.
For $k\in \bZ_{\ge 0}$, let $M_k$ denote the channel $\sigma_\rho\mapsto (T_k, \sigma_{L_k})$.
Then $(M_k)_{k\ge 0}$ satisfies the belief propagation (BP) recursion
\begin{align}
  M_{k+1} = \BP(M_k)
\end{align}
where $\BP$ is the belief propagation operator defined as
\begin{align} \label{eqn:bp-operator-defn}
  \BP(P) = \bE_{b\sim D} \left(P^{\times (r-1)} \circ B_{r,\lambda}\right)^{\star b}.
\end{align}
(See Section~\ref{sec:boht:prelim} for the relevant information theory background.)
In other words, $M_k = \BP^k(\Id)$, where $\Id: \{\pm\}\to \{\pm\}$ denotes the identity channel.
Then non-reconstruction is equivalent to
\begin{align} \label{eqn:boht-non-recon}
  \lim_{k\to \infty} C(\BP^k(\Id)) = 0,
\end{align}
where $C$ denotes channel capacity.
Note that the channels $M_k$ are binary memoryless symmetric (BMS) channels (Section~\ref{sec:boht:prelim}) and the $\BP$ operator sends BMS channels to BMS channels.

Our proof is a hypertree version of Sly's method. This method was introduced in \cite{sly2009reconstruction,sly2011reconstruction} for reconstruction on the Potts model. It has been applied to broadcasting with binary asymmetric channels (\cite{liu2019large}) and is recently refined by \cite{mossel2023exact} for the Potts channel.
The method has two parts: large degree asymptotics and robust reconstruction.

In the first part (large degree asymptotics), we prove the following result.
\begin{proposition}[Large degree asymptotics] \label{prop:large-deg-asymp-r56}
  Fix $r\le 6$. For any $\epsilon>0$, there exists $d_0=d_0(r,\epsilon)$ such that for any $d\ge d_0$ and $\lambda\in \left[-\frac 1{2^{r-1}-1}, 1\right]$ with $(r-1) d \lambda^2 \le 1$, we have
  \begin{align} \label{eqn:large-degree-asymp-main}
    \lim_{k\to \infty} C_{\chi^2}(\BP^k(\Id)) < \epsilon,
  \end{align}
  where $C_{\chi^2}$ denotes $\chi^2$-capacity (Section~\ref{sec:boht:prelim}).
\end{proposition}
The idea is that $C_{\chi^2}(P^{\star d})$ for a BMS channel $P$ is the expectation of a function involving $d$ i.i.d.~variables. When $d$ is large, the result can be approximated using the expectation of a function involving a Gaussian variable.
For the BOHT model, \cite{gu2023weak} established the Gaussian approximation.
The resulting function is a Gaussian integral (Eq.~\eqref{eqn:thm-boht-asymp-grw}), and we prove the numerical observation that it has no fixed points on $(0, 1]$.

The second part (robust reconstruction) is more difficult. We show that for any $r\ge 2$, there exists $\epsilon=\epsilon(r)>0$ such that if $(r-1) d \lambda^2\le 1$, then for any BMS channel $P$ with $C_{\chi^2}(P) \le \epsilon$, we have
\begin{align}
  \lim_{k\to \infty} C_{\chi^2}(\BP^k(P)) = 0.
\end{align}
This is called robust reconstruction because we are considering reconstruction of the root label $\sigma_\rho$ given observations of the leaf labels $\sigma_{L_k}$ through a very noisy channel $P$, instead of the identity channel $\Id$. \cite{janson2004robust} studied the robust reconstruction problem on trees, and proved that the robust reconstruction threshold matches the KS threshold in most cases. However, their result does not apply to the hypertree model, and does not work for the critical case (i.e., at the KS threshold).
The robust reconstruction result we prove is very precise: it works at criticality and the robustness parameter $\epsilon$ does not depend on $d$.

Let $x_k = C_{\chi^2}(\BP^k(\Id))$. We prove that if $x_k$ is small enough, then
\begin{align} \label{eqn:robust-recon-main-expansion}
  x_{k+1} = (r-1) d \lambda^2 x_k - \left((r-1)(r-2) d \lambda^3 + (r-1)^2 \left(\bE_{b\sim D} b(b-1)\right) \lambda^4\right) x_k^2 + O_r(x_k^3).
\end{align}
This is the main expansion in the proof of robust reconstruction.
The proof of Eq.~\eqref{eqn:robust-recon-main-expansion} is by expanding $C_{\chi^2}(\BP(P))$ when $C_{\chi^2}(P)$ is close to $0$.

There are several technical difficulties in the this approach.
The first difficulty is that $C_{\chi^2}(\BP(P))$ is not a function of $C_{\chi^2}(P)$.
Any BMS $P$ is described by a random variable $\theta_P$ on $[-1, 1]$ (see Section~\ref{sec:boht:prelim}).
The $\chi^2$-information $C_{\chi^2}(P)$ satisfies
\begin{align}
  C_{\chi^2}(P) = \bE \theta_P = \bE \theta_P^2.
\end{align}
Then $C_{\chi^2}(\BP(P))$ is the expectation of a function of $(r-1) d$ i.i.d.~$\theta_P$ variables.
In the expansion of $C_{\chi^2}(\BP(P))$, there will be higher order moments of $\theta_P$ which a priori may affect the first few orders in Eq.~\eqref{eqn:robust-recon-main-expansion}.
Our solution is to also perform expansion on the third moment. We define
\begin{align}
  y_k = \bE \theta_{M_k}^3 = \bE \theta_{M_k}^4.
\end{align}
Because $\|\theta_P\|\le 1$ almost surely for any $P$, we have $y_k \le x_k$.
We first perform a crude expansion of $x_{k+1}$ and $y_{k+1}$, which roughly speaking allows us to conclude that $y_k = O(x_k^2)$. In this way we can control the influence of $y_k$ and perform a finer expansion of $x_{k+1}$, which give us Eq.~\eqref{eqn:robust-recon-main-expansion}.

The second difficulty is that, unlike previous works which work with trees, we work with hypertrees. As we noted, $C_{\chi^2}(\BP(P))$ is the expectation of a function with $(r-1) d$ variables, while previous works have $d$ variables. When performing the expansion, there will be a large number of terms, even after considering symmetry. Our solution is to perform the expansion in two steps.
Let $M'_{k+1}$ be the channel $M_k^{\times (r-1)} \circ B_{r,\lambda}$.
We decompose the BP recursion $M_{k+1}=\BP(M_k)$ into two conceptually smaller steps $M_k \mapsto M'_{k+1}$ and $M'_{k+1}\mapsto M_{k+1}$.
Equivalently, we decompose the expansion step $(x_k,y_k)\mapsto (x_{k+1},y_{k+1})$ into two steps $(x_k, y_k) \mapsto (x'_{k+1}, y'_{k+1})$ and $(x'_{k+1}, y'_{k+1}) \mapsto (x_{k+1},y_{k+1})$, where
\begin{align}
  x'_k = \bE \theta_{M'_k} = \bE \theta_{M'_k}^2, \qquad
  y'_k = \bE \theta_{M'_k}^3 = \bE \theta_{M'_k}^4.
\end{align}
The fine expansions we get for $x$ and $x'$ are as follows.
\begin{align}
  x'_{k+1} &= (r-1) \lambda^2 x_k - (r-1)(r-2)\lambda^3 x_k^2 + O_r(\lambda^3 x_k^3), \\
  x_{k+1} &= d x'_{k+1} - \bE_{b\sim D} b(b-1) x_{k+1}^{\prime 2} + O_r(d^3 x_{k+1}^{\prime 3}).
\end{align}
Combining these smaller steps leads to the main expansion Eq.~\eqref{eqn:robust-recon-main-expansion}.

After establishing Eq.~\eqref{eqn:robust-recon-main-expansion}, we observe that for any parameters $r\ge 3$, $\lambda\in \left[-\frac 1{2^{r-1}-1}, 1\right]$, $d\ge 0$, if $(r-1) d \lambda^2 = 1$, then the coefficient of $x_k^2$ is negative. This means that $x_k$ contracts if it is small enough.
In this way we get the robust reconstruction result.
\begin{theorem}[Robust reconstruction is impossible below and at KS] \label{thm:boht-robust}
  Fix $r\ge 3$.
  Consider the model $\BOHT(r,\lambda,d)$ or $\BOHT(r,\lambda,\Pois(d))$.
  There exists $\epsilon=\epsilon(r)>0$ such that if $(r-1) d \lambda^2 \le 1$, then for any BMS channel $P$ with $C_{\chi^2}(P) \le \epsilon$, we have
  \begin{align}
    \lim_{k\to \infty} C_{\chi^2}(\BP^k(P)) = 0.
  \end{align}
  In other words, for fixed $r$, robust reconstruction is impossible below and at the KS threshold in a way that is uniform in $d$ and $\lambda$.
\end{theorem}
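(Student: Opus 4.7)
The plan is to track the $\chi^2$-capacity $x_k = C_{\chi^2}(\BP^k(P))$ and an auxiliary moment $y_k = \bE \theta_{M_k}^4 = \bE \theta_{M_k}^3$ (the second equality being the BMS identity $\bE \theta^{2j+1} = \bE \theta^{2j+2}$) through the BP recursion, and to establish Eq.~\eqref{eqn:robust-recon-main-expansion}. The theorem then follows because the quadratic coefficient in that expansion is strictly negative at criticality, so $x_k$ contracts as long as it is small. To tame the hypergraph structure, where $\BP$ sees $(r-1)d$ independent inputs, I would decompose one BP step into two substeps via the intermediate channel $M'_{k+1} = M_k^{\times(r-1)} \circ B_{r,\lambda}$, with corresponding quantities $x'_{k+1}, y'_{k+1}$, so that $M_{k+1}$ is the $D$-convolution of independent copies of $M'_{k+1}$. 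The inner substep $(x_k, y_k) \mapsto (x'_{k+1}, y'_{k+1})$ is a combinatorial expansion using independence of the $r-1$ inputs to $B_{r,\lambda}$ and Eq.~\eqref{eqn:broadcast-channel}; the outer substep $(x'_{k+1}, y'_{k+1}) \mapsto (x_{k+1}, y_{k+1})$ is the BMS convolution over $b \sim D$ edges, so moments of $\theta_{M^{\star b}}$ are polynomials in moments of $\theta_M$ via the tanh addition formula.

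The first step is to produce \emph{crude} expansions of both substeps, keeping only leading terms; schematically $x'_{k+1} = O_r(\lambda^2 x_k)$ and $y'_{k+1} = O_r(\lambda^4 x_k^2 + \lambda^2 y_k)$ on the inner side, and $d$-linear analogues on the outer side. Combining these with the a.s.~estimate $|y_k| \le x_k$ from $|\theta|\le 1$ and inducting on $k$ extracts the bootstrap $y_k \le C_r x_k^2$ once $x_k$ is small enough, uniformly in $d$ and $\lambda$ subject to the KS constraint $(r-1)d\lambda^2 \le 1$. This bootstrap is the technical heart of the argument, resolving the basic difficulty that $C_{\chi^2}(\BP(P))$ depends on higher moments of $\theta_P$, not just on $C_{\chi^2}(P)$.

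Armed with $y_k = O_r(x_k^2)$, I would redo both substeps to one higher order, obtaining the fine expansions
\[ x'_{k+1} = (r-1)\lambda^2 x_k - (r-1)(r-2)\lambda^3 x_k^2 + O_r(\lambda^3 x_k^3), \]
\[ x_{k+1} = d x'_{k+1} - \bigl(\bE_{b\sim D} b(b-1)\bigr) x_{k+1}^{\prime 2} + O_r(d^3 x_{k+1}^{\prime 3}); \]
the potential $y$-contributions at this order are absorbed by the bootstrap. Substituting the first into the second yields Eq.~\eqref{eqn:robust-recon-main-expansion}. At $(r-1)d\lambda^2 = 1$, using $(r-1)d\lambda^3 = \lambda$, the quadratic coefficient reduces to $-(r-2)\lambda - (r-1)^2 \lambda^4 \bE_{b\sim D} b(b-1)$; a short case analysis on $\sgn(\lambda)$ over $\lambda\in[-1/(2^{r-1}-1), 1]$, using the KS relation to eliminate $d$ in $\bE b(b-1)$, shows this is strictly negative with a gap depending only on $r$. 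Consequently for some $\epsilon(r), c(r) > 0$, $x_k \le \epsilon$ forces $x_{k+1} \le (1 - c x_k) x_k$, so $x_k \le \epsilon$ for all $k$ and $x_k \to 0$ by induction; the strict sub-KS case $(r-1)d\lambda^2 < 1$ is immediate from the linear term.

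The main obstacle is keeping all error constants uniform in $d$, since $d$ can be arbitrarily large with $\lambda$ correspondingly small under the KS constraint. The naive outer error $d^3 x_{k+1}^{\prime 3}$ would blow up, but $x'_{k+1} = O_r(\lambda^2 x_k)$ converts it into $O_r((d\lambda^2)^3 x_k^3)$, which the KS assumption controls. Analogous uniformity is required at every order of both substeps and throughout the $y_k$-bootstrap, and this is precisely the reason for the two-step decomposition: a direct single-step expansion with $(r-1)d$ variables generates combinatorially many cross-terms that resist a clean $d$-uniform bound.
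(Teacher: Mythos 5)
Your plan is structurally identical to the paper's: the two-substep decomposition via $M'_{k+1} = M_k^{\times(r-1)}\circ B_{r,\lambda}$, the crude expansion followed by the $y_k$-bootstrap, the fine expansion conditioned on normality, the main quadratic recursion Eq.~\eqref{eqn:robust-recon-main-expansion}, and the negativity of the quadratic coefficient at criticality are all exactly what appear in Lemmas~\ref{lem:boht-robust:a-priori}--\ref{lem:boht-robust:fine} and their assembly. You have also correctly identified why the two-substep decomposition is needed for $d$-uniformity of the error terms.

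However, there is a genuine gap in your handling of the $\lambda$-uniformity. You assert that the bootstrap $y_k \le C_r x_k^2$ holds ``uniformly in $d$ and $\lambda$ subject to the KS constraint,'' but it does not. From the crude recursion, at criticality the $y$-sequence contracts by $(r-1)d\lambda^4 = \lambda^2$ per step while $x_k$ is essentially stationary, so the normalized quantity $y_k/x_k^2$ converges to roughly $C/(1-\lambda^2)$, which diverges as $\lambda\to 1$ (a regime accessible for the Poisson offspring distribution with $d$ near $1/(r-1)$). Since the implicit constant in the $O_r(x_k^3)$ cubic error of the fine expansion scales with the bootstrap constant, the radius $\epsilon$ on which the quadratic term dominates degenerates as $\lambda\to 1$, and your argument produces an $\epsilon(r,\lambda)$ rather than the required $\epsilon(r)$. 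The paper circumvents this precisely: the Taylor-expansion machinery is only run for $\lambda\in\left[-\tfrac{1}{2^{r-1}-1},\tfrac15\right]$ (where $\lambda^2$ is bounded safely below $1$, giving a uniform $\gamma$), while for $\lambda\ge\tfrac15$ it invokes the unconditional non-reconstruction result of \cite[Theorem~2(iii)]{gu2023weak} (and for $r=2$, the Ising threshold of \cite{bleher1995purity,evans2000broadcasting}); in those regimes non-reconstruction for $\Id$ already holds, which implies robust non-reconstruction for every noisier $P$ by degradation monotonicity, so no Taylor expansion is needed there at all. You would need to insert an analogous case split to complete your argument.

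A smaller point: the paper also splits off the regime $(r-1)d\lambda^2\le 0.99$ explicitly, where the linear contraction alone suffices; you mention this briefly (``immediate from the linear term''), which is fine, but it is worth noting that the careful $0.99$ split is also what keeps the coefficient lower bound $0.4$ clean and $\lambda$-uniform in the critical window.
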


Finally, Theorem~\ref{thm:boht-r56} is proved by combining Prop.~\ref{prop:large-deg-asymp-r56} and Theorem~\ref{thm:boht-robust}. We first take $\epsilon>0$ in the robust reconstruction part such that Theorem~\ref{thm:boht-robust} holds. Then we take $d$ large enough such that large degree asymptotics works and Eq.~\eqref{eqn:large-degree-asymp-main} holds.


Our proof of Theorem~\ref{thm:boht-r56} also determines the asymptotic reconstruction threshold for BOHT with $r\ge 7$.
\begin{theorem}[Asymptotic BOHT reconstruction threshold] \label{thm:boht-asymp}
  Consider the model $\BOHT(r,\lambda,d)$ or $\BOHT(r,\lambda,\Pois(d))$ where $r\ge 7$ and $\lambda\in \left[-\frac 1{2^{r-1}-1}, 1\right]$.
  Let $w^*=w^*(r)$ be defined as
  \begin{align}
    w^*(r) &= \sup\{w > 0: g_{r,w}(x) \le x \forall 0\le x\le 1\}, \label{eqn:thm-boht-asymp-wstar}\\
    g_{r,w}(x) &= \bE_{Z\sim \cN(0,1)} \tanh\left( s_{r,w}(x) + \sqrt{s_{r,w}(x)} Z\right), \label{eqn:thm-boht-asymp-grw} \\
    s_{r,w}(x) &= \frac w2 \left((1+x)^{r-1}-(1-x)^{r-1}\right). \label{eqn:thm-boht-asymp-srw}
  \end{align}
  Then for any $\epsilon>0$, there exists $d_0 = d_0(r,\epsilon)$ such that for $d\ge d_0$, if $d \lambda^2 \ge w^*+\epsilon$ then reconstruction is possible; if $d \lambda^2 \le w^*-\epsilon$ then reconstruction is impossible.
\end{theorem}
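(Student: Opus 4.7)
The plan is to derive Theorem~\ref{thm:boht-asymp} from the same two ingredients developed for Theorem~\ref{thm:boht-r56}: (a) the large-degree Gaussian asymptotics of \cite{gu2023weak}, which give $C_{\chi^2}(\BP(P)) = g_{r,w}(C_{\chi^2}(P)) + o_d(1)$ uniformly over BMS channels $P$ when $w = d\lambda^2$ is held fixed; and (b) the uniform robust non-reconstruction estimate of Theorem~\ref{thm:boht-robust}. Writing $x_k = C_{\chi^2}(\BP^k(\Id))$, the sequence $(x_k)$ is monotone non-increasing because $\BP$ acts as a degradation on BMS channels, and reconstruction is possible iff $\lim_k x_k > 0$.

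First I would record the analytic properties of the scalar recursion $y_{k+1} = g_{r,w}(y_k)$, $y_0 = 1$. The map $g_{r,w}$ is continuous, strictly increasing in $x$ on $[0,1]$, with $g_{r,w}(0) = 0$. It is also strictly increasing in $w$ at every $x > 0$, which reduces to $\frac{d}{ds} \bE \tanh(s+\sqrt{s} Z) = \bE\bigl[\operatorname{sech}^2(s+\sqrt{s} Z)(1-\tanh(s+\sqrt{s} Z))\bigr] > 0$ by Gaussian integration by parts. A Taylor expansion at $x=0$ gives $g_{r,w}'(0) = (r-1) w$, so the condition $g_{r,w}(x) \le x$ near $0$ forces $w \le \frac{1}{r-1}$; hence $w^* \le \frac{1}{r-1}$. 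Combined with the sup-definition of $w^*$ these properties yield: for $w > w^*$, there is an interval $(x_1, x_2) \subseteq (0,1)$ on which $g_{r,w}(x) > x$ with fixed points at the endpoints, so iterating from $y_0 = 1$ gives $y_k \downarrow x_2 > 0$; for $w < w^*$, $g_{r,w}(x) < x$ strictly for every $x \in (0,1]$, and on each compact $[\epsilon, 1]$ one has a uniform contraction $g_{r,w}(x) \le x - \delta(\epsilon)$ with $\delta(\epsilon) > 0$.

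For the possibility direction $w \ge w^* + \epsilon$, I would couple $(x_k)$ with $(y_k)$: pick any $x_{\mathrm{safe}} \in (x_1, x_2)$ so that $c := g_{r,w}(x_{\mathrm{safe}}) - x_{\mathrm{safe}} > 0$, and take $d$ large enough that the Gaussian approximation error satisfies $|\eta(d)| < c/2$; then monotonicity of $g_{r,w}$ in $x$ preserves the invariance $x_k \ge x_{\mathrm{safe}}$ by induction, so $\lim_k x_k \ge x_{\mathrm{safe}} > 0$. For the impossibility direction $w \le w^* - \epsilon$, the bound $w^* \le \frac{1}{r-1}$ yields $(r-1) d \lambda^2 \le 1$, so Theorem~\ref{thm:boht-robust} supplies some $\epsilon(r) > 0$; the uniform scalar contraction forces $y_K < \epsilon(r)/2$ after some $K = K(r, \epsilon)$ steps, and the Gaussian approximation transfers this to $x_K < \epsilon(r)$ for $d$ large, after which Theorem~\ref{thm:boht-robust} drives $x_k \to 0$, i.e., non-reconstruction.

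The main obstacle is confirming that the one-step Gaussian approximation suffices despite being iterated. This works because on each side of $w^*$ the scalar dynamics settle (into a basin around $x_2$ above, or below $\epsilon(r)$ below) in a bounded number of iterations, so only $O(1)$ applications of the approximation are needed and the accumulated error stays $o_d(1)$. The key ``meeting point'' of the two tools appears in the impossibility direction: the Gaussian approximation only drives $x_k$ down to a fixed threshold in finitely many steps, and Theorem~\ref{thm:boht-robust} takes over from there; the elementary bound $w^* \le \frac{1}{r-1}$ derived from $g_{r,w}'(0) = (r-1) w$ is what ensures the robust regime is accessible exactly when it is needed.
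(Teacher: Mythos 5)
Your proposal is correct and takes essentially the same route as the paper: large-degree Gaussian asymptotics (Proposition~\ref{prop:large-deg-asymp}) to reduce to the scalar map $g_{r,w}$, a barrier/invariance argument for the possibility direction, and a uniform scalar contraction down to the threshold $\epsilon(r)$ of Theorem~\ref{thm:boht-robust} for the impossibility direction. Your explicit derivation $g_{r,w}'(0)=(r-1)w$, hence $w^*\le \frac{1}{r-1}$, makes precise why Theorem~\ref{thm:boht-robust} (which requires $(r-1)d\lambda^2\le 1$) is available in the regime $d\lambda^2\le w^*-\epsilon$; the paper leaves that check implicit when it says ``the rest of the proof is the same as the proof of Theorem~\ref{thm:boht-r56}.''

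Two small points worth flagging. First, in the possibility direction you apply the Gaussian approximation directly at $w=d\lambda^2\ge w^*+\epsilon$, but Proposition~\ref{prop:large-deg-asymp} is only uniform on the range $(r-1)d\lambda^2\le C$ for a fixed $C$. The paper handles this by invoking the degradation monotonicity Lemma~\ref{lem:boht-monotone} to reduce to the boundary case $d\lambda^2 = w^*+\epsilon$; you could equivalently dispose of the case $(r-1)d\lambda^2>1$ by noting that it is above the KS threshold, where reconstruction is already known. Either fix is routine, but you should state one of them. Second, your claim that $\{x:g_{r,w}(x)>x\}$ is a single interval with fixed points at its endpoints is not established (and not needed): the barrier argument only requires one $x_0$ with $g_{r,w}(x_0)>x_0$, which exists by the definition of $w^*$ and monotonicity of $g_{r,w}$ in $w$ (Lemma~\ref{lem:large-step-g-property}). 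Likewise your framing of the impossibility step as ``accumulated error over $O(1)$ iterations'' is unnecessarily delicate; the per-step contraction estimate $x_{k+1}\le g_{r,w}(x_k)+\eta\le x_k-\delta/2$ valid on $[\epsilon(r),1]$ is cleaner and is what the paper's Proposition~\ref{prop:large-deg-asymp-r56} argument implements. None of these points affect the correctness of the overall strategy.
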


\subsection{Organization} \label{sec:intro:org}
In Section~\ref{sec:boht} we present the proof structure for Theorem~\ref{thm:boht-r56} (tightness of the KS threshold for $r\le 6$ and large $d$), which includes the two main parts Prop.~\ref{prop:large-deg-asymp-r56} (large degree asymptotics) and Theorem~\ref{thm:boht-robust} (robust reconstruction below and at KS).
In Section~\ref{sec:hsbm-upper} we prove Theorem~\ref{thm:hsbm-upper} (non-asymptotic HSBM weak recovery upper bound).

In Section~\ref{sec:boht-app} we provide missing proofs of lemmas used in Section~\ref{sec:boht}.
In Section~\ref{sec:remain} we provide proofs of our remaining main results, including Corollary~\ref{coro:hsbm-upper} (non-tightness of the KS threshold), Theorem~\ref{thm:hsbm-asymp} (asymptotic HSBM weak recovery bounds), and Theorem~\ref{thm:boht-asymp} (asymptotic BOHT reconstruction threshold).
In Section~\ref{sec:discuss} we make discussions on related works and further directions.

\section{Reconstruction on hypertrees} \label{sec:boht}

\subsection{Preliminaries} \label{sec:boht:prelim}
A binary memoryless symmetric (BMS) channel is a channel $P: \{\pm\}\to \cY$ together with a measurable involution $\sigma: \cY \to \cY$ such that $P(E|+)=P(\sigma(E)|-)$ for all measurable $E\subseteq \cY$.
Examples of BMS channels include binary erasure channels (BECs) and binary symmetric channels (BSCs). The broadcast channel $B_{r,\lambda}$ (Eq.~\eqref{eqn:broadcast-channel}) and the channels $M_k$ (defined as $\sigma_\rho \mapsto (T_k, \sigma_{L_k})$) are all BMS channels (with the involution being the global sign flip).

Every BMS channel is equivalent to a mixture of BSC channels. That is, every BMS channel $P$ is equivalent to a channel $X\mapsto (\Delta,Z)$ where $\Delta\in \left[0,\frac 12\right]$ is independent of $X$ and $P_{Z|\Delta,X}=\BSC_\Delta(\cdot|X)$. We use $\Delta_P$ to denote the $\Delta$-component of $P$. It is a random variable taking values in the interval $\left[0,\frac 12\right]$.
In this way, BMS channels (up to channel equivalence) are in one-to-one correspondence with distributions on $\left[0,\frac 12\right]$.

In computations, it is often easier to work with the signed $\theta$-component, rather than the $\Delta$-component.
Let $P$ be a BMS channel and $\Delta_P$ be its $\Delta$-component.
Define $\theta_P\in [-1,1]$ to be the random variable such that conditioned on $\Delta_P$,
\begin{align}
  \theta_P=\left\{
    \begin{array}{ll}
      1-2\Delta_P, & \text{w.p.}~1-\Delta_P,\\
      2\Delta_P-1, & \text{w.p.}~\Delta_P.
    \end{array}
  \right.
\end{align}
We call $\theta_P$ the signed $\theta$-component of $P$.

The $\chi^2$-capacity of a BMS channel $P$ is defined as
\begin{align}
  C_{\chi^2}(P) &= I_{\chi^2}(\pi, P) = \bE (1-2\Delta_P)^2 = \bE \theta_P = \bE \theta_P^2,
\end{align}
where $\pi = \Unif(\{\pm\})$. For BMS channels, the $\chi^2$-capacity is within a constant factor of the capacity. That is,
\begin{align}
  \frac 12 C_{\chi^2}(P)\le C(P) \le (\log 2) C_{\chi^2}(P).
\end{align}
Therefore, Eq.~\eqref{eqn:boht-non-recon} holds if and only if
\begin{align}
  \lim_{k\to \infty} C_{\chi^2}(\BP^k(\Id)) = 0.
\end{align}

Let $P: \cX\to \cY$ and $Q: \cX\to \cZ$ be two channels with the same input alphabet. We define the $\star$-convolution channel $P\star Q: \cX \to \cY\times \cZ$ as $(P\star Q)(E\times F|x) = P(E|x) Q(F|x)$ for measurable subsets $E\subseteq \cY$, $F\subseteq \cZ$. For $n\in \bZ_{\ge 0}$, we use $P^{\star n}$ to denote the $n$-th $\star$-convolution power.

Let $P: \cX\to \cY$ and $P': \cX'\to \cY'$ be two channels. We define the tensor product channel $P\times P': \cX\times \cX'\to \cY\times \cY'$ as $(P\times P')(E\times E'|(x,x')) = P(E|x) P(E'|x')$ for measurable subsets $E\subseteq \cX$, $E'\subseteq \cX'$. For $n\in \bZ_{\ge 0}$, we use $P^{\times n}$ to denote the $n$-th tensor power.

Let $\cA$ be a collection of information channels with the same input alphabet $\cX$ and $\mu$ be a distribution on $\cA$. We define $\bE_{P\sim \mu} P$ to be the channel that maps $x\in \cX$ to $(P, Y)$, where $P\sim \mu$ and $Y\sim P(\cdot|x)$. This channel is called a mixture of $\cA$.

With the above definitions, the BP operator (Eq.~\eqref{eqn:bp-operator-defn}) is well-defined and maps BMS channels to BMS channels.

\subsection{Large degree asymptotics} \label{sec:boht:large-deg}
\begin{proposition}[Large degree asymptotics, {\cite[Prop.~30]{gu2023weak}}] \label{prop:large-deg-asymp}
  Fix $r\ge 2$. For any $\epsilon>0$ and $C>0$, there exists $d_0=d_0(r,\epsilon)$ such that for any $d\ge d_0$ and $\lambda\in \left[-\frac 1{2^{r-1}-1}, 1\right]$ with $(r-1) d \lambda^2 \le C$, for any BMS channel $P$ we have
  \begin{align}
    \left|C_{\chi^2}(\BP(P)) - g_{r,d\lambda^2}(C_{\chi^2}(P))\right| \le \epsilon,
  \end{align}
  where $g_{r,w}(x)$ is defined in Eq.~\eqref{eqn:thm-boht-asymp-grw}.
\end{proposition}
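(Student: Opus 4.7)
The plan is to approximate the BP channel by a Gaussian BMS channel and to identify the limit as $g_{r,d\lambda^2}(x)$ where $x = C_{\chi^2}(P)$. Writing $\BP(P) = \bE_{b\sim D}\,Q^{\star b}$ with $Q := P^{\times(r-1)}\circ B_{r,\lambda}$, I would first derive a clean formula for the log-likelihood ratio $L_Q$ of $Q$. Using the decomposition $B_{r,\lambda}(\cdot|X) = (1-\lambda)\Unif(\{\pm\}^{r-1}) + \lambda\,\delta_{(X,\ldots,X)}$, an elementary marginalization gives
\begin{equation*}
  L_Q(y_1,\ldots,y_{r-1}) = \log\frac{(1-\lambda) + \lambda\prod_{i=1}^{r-1}(1+\alpha_i)}{(1-\lambda) + \lambda\prod_{i=1}^{r-1}(1-\alpha_i)},
\end{equation*}
where $\alpha_i := (P(y_i|+)-P(y_i|-))/(P(y_i|+)+P(y_i|-))$ is the signed $\theta$-component of the $i$-th marginal output of $Q$.

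Next I would compute the first two conditional moments of $L_Q$ under $Q(\cdot|X=+)$ to leading order in $\lambda$. The conditional law of $(\alpha_1,\ldots,\alpha_{r-1})$ is a two-component mixture: with probability $\lambda$, the $\alpha_i$ are i.i.d.\ copies of $\theta_P$ under $P(\cdot|+)$ (mean $x$, second moment $x$); with probability $1-\lambda$, they are i.i.d.\ copies under the symmetrized marginal $\bar P = \tfrac12 P(\cdot|+) + \tfrac12 P(\cdot|-)$ (mean $0$ by BMS symmetry, second moment $x$). Taylor expanding the log and combining the two branches yields, uniformly in $P$,
\begin{align*}
  \bE[L_Q \mid X=+] &= \lambda^2\bigl((1+x)^{r-1}-(1-x)^{r-1}\bigr) + O(\lambda^3), \\
  \Var[L_Q \mid X=+] &= 2\lambda^2\bigl((1+x)^{r-1}-(1-x)^{r-1}\bigr) + O(\lambda^3),
\end{align*}
with error terms controlled via $|\alpha_i|\le 1$ a.s. The key structural point is that the variance equals twice the mean to leading order, which is exactly the relation characterizing LLRs of Gaussian-noise BMS channels.

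The final step is a central limit theorem for the sum $L_{Q^{\star b}} = \sum_{j=1}^b L_Q^{(j)}$. For $b$ either the constant $d$ or $\Pois(d)$, a Berry--Esseen estimate using the $P$-uniform bound $\bE[|L_Q|^3 \mid X=+] = O(\lambda^3)$ shows that the conditional law of $L_{Q^{\star b}}$ given $X=+$ is close, on bounded Lipschitz test functions, to $\mathcal{N}(2s,4s)$ with $s := s_{r,d\lambda^2}(x) = \tfrac{d\lambda^2}{2}((1+x)^{r-1}-(1-x)^{r-1})$, with error $O(d^{-1/2})$. Applying this to the $1$-Lipschitz bounded test function $\tanh(\cdot/2)$ and using $C_{\chi^2}(\BP(P)) = \bE[\tanh(L_{Q^{\star b}}/2) \mid X=+]$ gives
\begin{equation*}
  C_{\chi^2}(\BP(P)) \to \bE_{Z\sim\mathcal{N}(0,1)}\tanh\bigl(s+\sqrt{s}\,Z\bigr) = g_{r,d\lambda^2}(x).
\end{equation*}

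The principal difficulty is ensuring that all error bounds are uniform in the input channel $P$. Higher moments of the $\alpha_i$ may depend on $P$ arbitrarily within $[-1,1]$, so one must verify that these moments affect only the $O(\lambda^3)$ remainder, not the leading $\lambda^2$ terms; the trivial bound $|\alpha_i|\le 1$ combined with the mixture decomposition above suffices for this. A minor additional consideration for the Poisson case is that the randomness in $b$ contributes a fluctuation of order $\sqrt{\Var(b)}\cdot\bE[L_Q \mid X=+] = O(\lambda^2\sqrt{d})$, which is $O(1)$ in the regime $d\lambda^2\le C$ and is absorbed by the CLT argument applied in the random-sum form.
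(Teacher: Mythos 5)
The paper does not actually prove Proposition~\ref{prop:large-deg-asymp}; it cites \cite[Prop.~30]{gu2023weak} and notes that the same proof extends verbatim to $\lambda<0$ and constant $C$. So there is no in-paper proof to compare against line by line. That said, your reconstruction follows the expected Gaussian-approximation route, and the algebraic core is right: the mixture decomposition $Q_+=(1-\lambda)\bar P^{\otimes(r-1)}+\lambda P_+^{\otimes(r-1)}$ (interpreted as a signed decomposition when $\lambda<0$), the formula for $L_Q$, the mixture-branch moment calculation giving $\bE[L_Q\mid +]=\lambda^2\bigl((1+x)^{r-1}-(1-x)^{r-1}\bigr)+O_r(\lambda^3)$ and $\Var[L_Q\mid +]=2\lambda^2\bigl((1+x)^{r-1}-(1-x)^{r-1}\bigr)+O_r(\lambda^3)$, and the identification $C_{\chi^2}=\bE[\tanh(L/2)\mid +]$ are all correct.

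There is a genuine gap in the CLT step, however. You claim a Berry--Esseen error of $O(d^{-1/2})$ uniformly over all BMS $P$ from the bound $\bE[|L_Q|^3\mid +]=O(\lambda^3)$. But the Berry--Esseen ratio is $\rho/\sigma^3$, not $\rho/\lambda^3$, and $\sigma^2=\Var[L_Q\mid +]=\Theta(\lambda^2 x)$ when $x=C_{\chi^2}(P)$ is small. One cannot refine $\rho$ beyond $\Theta(\lambda^3 x)$ (e.g.\ if $\theta_P$ places all its mass on $\{0,\pm 1\}$, then $\bE|\theta_P|^3=\bE\theta_P^2=x$), so $\rho/\sigma^3=\Theta(x^{-1/2})$, and the normalized CLT error is $\Theta((dx)^{-1/2})$, which is \emph{not} uniformly small. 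The fix is a case split on $x$: when $x\le x_0(\epsilon)$ both sides of the target inequality are already $O(x)$ --- $C_{\chi^2}(\BP(P))\le (r-1)d\lambda^2\,C_{\chi^2}(P)\le Cx$ by the Kesten--Stigum contraction bound, and $g_{r,d\lambda^2}(x)\le s_{r,d\lambda^2}(x)\le Cx$ by Lemma~\ref{lem:large-step-g-property}\ref{item:lem-large-step-g-property:ii} --- so the claim holds trivially; when $x\ge x_0$ the variance is $\Theta(\lambda^2)$, your Berry--Esseen estimate is valid, and the error is $O_r((x_0 d)^{-1/2})$, which can be made $\le \epsilon$ by taking $d_0$ large. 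The rest of your argument (including the random-sum CLT for the Poisson case) is sound once this case split is added.
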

\cite[Prop.~30]{gu2023weak} stated the result only for $\lambda>0$ and $C=1$, but their proof works without any change for $\lambda\in \left[-\frac 1{2^{r-1}-1}, 1\right]$ and constant $C$.
\begin{lemma}[Properties of $g$] \label{lem:large-step-g-property}
  The function $g_{r,w}$ and $s_{r,w}$ defined in Eq.~\eqref{eqn:thm-boht-asymp-grw} satisfies the following properties.
  \begin{enumerate}[label=(\roman*)]
    \item \label{item:lem-large-step-g-property:i} $g_{r,w}(x)$ is non-decreasing in $r\in \bZ_{\ge 2}$ (strict when $w\ne 0$ and $x\ne 0$), $w\in \bR_{\ge 0}$ (strict when $x\ne 0$), and $x\in [0, 1]$ (strict when $w\ne 0$).
    \item \label{item:lem-large-step-g-property:ii} $g_{r,w}(x) \le s_{r,w}(x)$. The inequality is strict when $w\ne 0$ and $x\ne 0$.
  \end{enumerate}
\end{lemma}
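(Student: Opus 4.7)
The plan is to factor $g_{r,w}(x) = h(s_{r,w}(x))$ where
\begin{equation*}
h(s) := \bE_{Z\sim \cN(0,1)} \tanh\!\bigl(s + \sqrt{s}\,Z\bigr), \quad s \ge 0,
\end{equation*}
and reduce both parts to (a) monotonicity of $s_{r,w}$ in each argument and (b) two properties of $h$: strict monotonicity on $[0,\infty)$ and the bound $h(s)\le s$. Note $s_{r,w}(x)\ge 0$ throughout the parameter range since $(1+x)^{r-1}\ge (1-x)^{r-1}$ for $x\in[0,1]$, so $h\circ s_{r,w}$ is well-defined.

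\textbf{Step 1 (monotonicity of $s_{r,w}$).} Direct polynomial computations give
\begin{align*}
s_{r+1,w}(x) - s_{r,w}(x) &= \tfrac{wx}{2}\bigl[(1+x)^{r-1} + (1-x)^{r-1}\bigr],\\
\partial_w s_{r,w}(x) &= \tfrac{1}{2}\bigl[(1+x)^{r-1} - (1-x)^{r-1}\bigr],\\
\partial_x s_{r,w}(x) &= \tfrac{w(r-1)}{2}\bigl[(1+x)^{r-2} + (1-x)^{r-2}\bigr],
\end{align*}
all nonnegative, with the strictness cases in each variable matching precisely the conditions in the lemma.

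\textbf{Step 2 ($h$ is strictly increasing).} A short computation using Stein's lemma (Gaussian integration by parts applied to $Z\mapsto \mathrm{sech}^2(s+\sqrt{s}Z)$) yields
\begin{equation*}
h'(s) = \bE\bigl[(1 - \tanh(s+\sqrt{s}\,Z))^2\,(1 + \tanh(s+\sqrt{s}\,Z))\bigr] > 0,
\end{equation*}
since $\tanh\in(-1,1)$ almost surely. Combined with Step 1, this proves part (i) in full, including all strict cases.

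\textbf{Step 3 (the bound $h(s)\le s$).} Set $\psi(y) := \tanh(y) - y$; then $\psi$ is odd with $\psi(y)\le 0$ on $y\ge 0$ (because $\tanh'\le 1$ and $\psi(0)=0$). Using oddness to fold the integral over $(-\infty,0)$ onto $[0,\infty)$,
\begin{equation*}
\bE\psi\!\bigl(s+\sqrt{s}\,Z\bigr) = \int_0^\infty \psi(u)\bigl[p_s(u) - p_s(-u)\bigr]\, du,
\end{equation*}
where $p_s$ is the $\cN(s,s)$-density. A direct calculation gives
\begin{equation*}
p_s(u) - p_s(-u) = \tfrac{2}{\sqrt{2\pi s}}\,e^{-(u^2+s^2)/(2s)}\sinh(u) \ge 0 \qquad (u,s \ge 0),
\end{equation*}
so the integrand is a product of a nonpositive and a nonnegative factor; hence the integral is $\le 0$, i.e., $h(s)\le s$. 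For $s>0$ both factors are strictly signed on $u>0$, giving strict inequality. Since $s_{r,w}(x)>0$ iff $w\ne 0$ and $x\ne 0$, this yields (ii) with the stated strict case.

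\textbf{Main obstacle.} The only step requiring genuine thought is the pointwise bound $h(s)\le s$; the Stein and polynomial monotonicity computations are mechanical. The folding trick combined with the oddness of $\psi$ succinctly exploits both the sign of $\tanh(y)-y$ and the asymmetry of $\cN(s,s)$ around the origin.
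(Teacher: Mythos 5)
Your proof is correct, but it departs from the paper's on both non-trivial ingredients. For the strict monotonicity of $h(s)=\bE\tanh(s+\sqrt s Z)$, the paper simply cites \cite[Lemma 4.4]{sly2011reconstruction}, while you derive the derivative formula
\begin{equation*}
h'(s) = \bE\bigl[(1-\tanh(s+\sqrt s Z))^2(1+\tanh(s+\sqrt s Z))\bigr] > 0
\end{equation*}
directly via Gaussian integration by parts; this makes the lemma self-contained and, as a side benefit, works transparently for all $s\ge 0$ rather than just $s\in[0,1]$ (the range stated in the citation), which matters because $s_{r,w}(x)$ can exceed $1$. For the bound $h(s)\le s$, the paper symmetrizes to $\frac12\bE[\tanh(s+\sqrt sZ)+\tanh(s-\sqrt sZ)]$ and invokes an auxiliary inequality (Lemma~\ref{lem:tanh-technical}, essentially a midpoint-concavity statement for $\tanh$ on the half-line) to get $g(s)\le \tanh(s)\le s$; you instead set $\psi=\tanh-\mathrm{id}$, fold the Gaussian integral about the origin using oddness of $\psi$, and observe that the density asymmetry $p_s(u)-p_s(-u)\propto\sinh(u)\ge0$ pairs a nonpositive factor with a nonnegative one. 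The two arguments are of comparable length and both exploit the same two facts (sign of $\tanh(y)-y$ and a reflection symmetry), but yours avoids the detour through the intermediate bound $g(s)\le\tanh(s)$ and the separate technical lemma. The polynomial monotonicity computations in Step~1 and the reduction of all the strictness claims to strict positivity of $h'$ plus strict monotonicity of $s_{r,w}$ check out.
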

\begin{lemma} \label{lem:large-step-g-contract-r56}
  Fix $r\le 6$. Then $g_{r,\frac 1{r-1}}(x) < x$ for all $0<x\le 1$.
\end{lemma}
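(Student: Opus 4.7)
The plan is to treat $r \in \{2,3\}$ and $r \in \{4,5,6\}$ separately, leveraging the structural inequalities of Lemma~\ref{lem:large-step-g-property} together with a quantitative expansion of the Gaussian integral $\phi(s) := \bE_{Z \sim \cN(0,1)} \tanh(s + \sqrt{s} Z)$ near $s = 0$.

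For $r \in \{2, 3\}$ the argument is immediate. In these cases the binomial coefficient $\binom{r-1}{3}$ vanishes, so $(1+x)^{r-1} - (1-x)^{r-1} = 2(r-1)x$, giving the identity $s_{r, 1/(r-1)}(x) = x$. Applying Lemma~\ref{lem:large-step-g-property}\ref{item:lem-large-step-g-property:ii} with $w = 1/(r-1) > 0$ and $x \in (0,1]$ yields the strict inequality
\begin{align*}
g_{r, 1/(r-1)}(x) < s_{r, 1/(r-1)}(x) = x.
\end{align*}

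For $r \in \{4, 5, 6\}$ we have $s_{r, 1/(r-1)}(x) > x$ for $x > 0$, so the elementary bound $g_{r,w} \le s_{r,w}$ is no longer sufficient and we must use the second-order behavior of $\phi$. Expanding $\tanh$ as its power series and taking Gaussian moments produces
\begin{align*}
\phi(s) = s - s^2 + \tfrac{5}{3} s^3 - \tfrac{13}{3} s^4 + R(s),
\end{align*}
together with an explicit truncation-error bound $|R(s)| \le C s^5$ valid on the bounded interval $s \in [0, s_{r,1/(r-1)}(1)] \subseteq [0, 16/5]$. Since $s_{r,1/(r-1)}$ is an explicit odd polynomial of degree $r-1$, composition reduces the claim to an explicit polynomial inequality in $x$ on $(0, 1]$. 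To close the inequality one partitions $(0, 1]$ into (i) a neighborhood $(0, \delta_r]$ on which the leading $-x^2$ term of $g_{r, 1/(r-1)}(x) - x$ dominates unambiguously, and (ii) the compact interval $[\delta_r, 1]$, handled by a finite numerical check on a sufficiently fine grid, with the gaps between grid points controlled using monotonicity of $g_{r, 1/(r-1)}$ in $x$ from Lemma~\ref{lem:large-step-g-property}\ref{item:lem-large-step-g-property:i} together with an explicit modulus of continuity for $\phi$.

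The main obstacle is the thin margin at $r = 6$. For $r \ge 7$ the same computation actually produces a genuine positive fixed point of $g_{r, 1/(r-1)}$ inside $(0, 1]$, which is precisely the mechanism behind non-tightness of the KS threshold; correspondingly, at $r = 6$ the gap $x - g_{6, 1/5}(x)$ becomes small in the middle of $(0, 1]$, so both the Taylor remainder $R$ and the grid in (ii) must be controlled with sufficient precision to close the inequality. In effect the proof becomes computer-assisted in this case, with rigor provided by truncated Taylor bounds with explicit remainders on the Gaussian integral defining $\phi$.
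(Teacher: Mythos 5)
Your plan is in essence the paper's: handle small $x$ via a quantitative second-order expansion of $\phi(s)=\bE_{Z}\tanh(s+\sqrt{s}Z)$ near $s=0$, and handle $x$ bounded away from $0$ by a finite grid check combined with a modulus-of-continuity bound. The paper first uses monotonicity of $s_{r,1/(r-1)}(x)$ in $r$ to reduce everything at once to the worst case $r=6$ (so $s=x+2x^3+x^5/5$) rather than running the argument separately for $r=4,5,6$; your dispatch of $r\in\{2,3\}$ via $s_{r,1/(r-1)}(x)=x$ together with Lemma~\ref{lem:large-step-g-property}\ref{item:lem-large-step-g-property:ii} is correct but unnecessary once that reduction is made. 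The one place where your sketch should be tightened is the derivation of $\phi(s)=s-s^2+\tfrac{5}{3}s^3-\tfrac{13}{3}s^4+R(s)$: ``expanding $\tanh$ as its power series and taking Gaussian moments'' is not directly justified, since the $\tanh$ series has radius of convergence $\pi/2$ while $s+\sqrt{s}Z$ is unbounded. You would either need to use a finite Taylor polynomial of $\tanh$ with a uniform Lagrange remainder together with the symmetrization $\phi(s)=\tfrac12\bE[\tanh(s+\sqrt{s}Z)+\tanh(s-\sqrt{s}Z)]$ (so that only integer powers of $s$ survive and the remainder is genuinely $O(s^5)$), or follow the paper, which sidesteps the issue entirely by writing $\tanh$ in terms of $Z_\pm=e^{\pm(s+\sqrt{s}Z)}$, using the exact algebraic identity $\frac1{1+x}=1-x+x^2-x^3+\frac{x^4}{1+x}$, and then bounding only the exponentials via Taylor's theorem on a small range. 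Your stated coefficients match what the paper's method produces (the paper records the slightly weaker upper bound $g(s)\le s-s^2+\tfrac53 s^3+\tfrac{135}{16}s^4$, which suffices to get $g(s)\le s-\tfrac23 s^2$ on the small-$s$ range), so the conclusion is sound, but the remainder analysis as written is not yet rigorous. The grid step (iii) and the Lipschitz control are exactly the paper's argument, and your remark about $r\ge 7$ producing a positive fixed point correctly identifies why the lemma cannot hold beyond $r=6$.
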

Proofs of Lemma~\ref{lem:large-step-g-property} and Lemma~\ref{lem:large-step-g-contract-r56} are deferred to Section~\ref{sec:boht-app:large-deg-asymp}.
\begin{proof}[Proof of Prop.~\ref{prop:large-deg-asymp-r56}]
  By Lemma~\ref{lem:large-step-g-contract-r56}, for any $\epsilon>0$ there exists $\delta>0$ such that $g_{r,\frac 1{r-1}}(x) \le x-\delta$ for all $x\in [\epsilon,1]$.
  Take $d_0=d_0(r,\delta/2)$ in Prop.~\ref{prop:large-deg-asymp}.
  Then for any BMS channel $P$ with $C_{\chi^2}(P) \ge \epsilon$, we have
  \begin{align}
    C_{\chi^2}(\BP(P)) \le g_{r,d\lambda^2}(C_{\chi^2}(P)) + \delta/2 \le C_{\chi^2}(P) - \delta/2.
  \end{align}
  If Eq.~\eqref{eqn:large-degree-asymp-main} does not hold, then
  \begin{align}
    \lim_{k\to \infty} C_{\chi^2}(M_k) = \lim_{k\to \infty} C_{\chi^2}(M_{k+1}) \le \lim_{k\to \infty} (C_{\chi^2}(M_k)-\delta/2),
  \end{align}
  which is absurd.
\end{proof}

\subsection{Robust reconstruction} \label{sec:boht:robust}
In this section we prove Theorem~\ref{thm:boht-robust}.
We note that the case $\lambda\ge \frac 15$ follows from \cite[Theorem 2(iii)]{gu2023weak}.
In the following, assume that $\lambda\in \left[-\frac 1{2^{r-1}-1}, \frac 15\right]$.

Fix $d$, $\lambda$, and $P$ as in the statement of Theorem~\ref{thm:boht-robust}.
Our goal is to prove that
\begin{align}
  \lim_{k\to \infty} C_{\chi^2}(\BP^k(P)) = 0
\end{align}
when $C_{\chi^2}(P)$ is sufficiently small.

For $k\in \bZ_{\ge 0}$, define
\begin{align}
  M_{P,k} = \BP^k(P), \qquad M'_{P,k+1} = M_{P,k}^{\times (r-1)} \circ B_{r,\lambda}.
\end{align}

Define
\begin{align}
  x_k &= C_{\chi^2}(M_{P,k}) = \bE \theta_{M_{P,k}} = \bE \theta_{M_{P,k}}^2, \\
  y_k &= \bE \theta_{M_{P,k}}^3 = \bE \theta_{M_{P,k}}^4, \\
  x'_{k+1} &= C_{\chi^2}(M'_{P,k+1}) = \bE \theta_{M'_{P,k+1}} = \bE \theta_{M'_{P,k+1}}^2, \\
  y'_{k+1} &= \bE \theta_{M'_{P,k+1}}^3 = \bE \theta_{M'_{P,k+1}}^4.
\end{align}
Because $|\theta|\le 1$, we have $y_k\le x_k$, $y'_{k+1}\le x'_{k+1}$ for all $k\in \bZ_{\ge 0}$.

\begin{lemma}[A priori estimates] \label{lem:boht-robust:a-priori}
  We have
  \begin{align}
    \label{eqn:lem-boht-robust-a-priori:i} x'_{k+1} &= (r-1)\lambda^2 x_k + O_r(\lambda^2 x_k^2), \\
    \label{eqn:lem-boht-robust-a-priori:ii} y'_{k+1} &= (r-1)\lambda^4 y_k + O_r(\lambda^2 x_k^2), \\
    \label{eqn:lem-boht-robust-a-priori:iii} x_{k+1} &= d x'_{k+1} + O_r(d^2 x_{k+1}^{\prime 2}), \\
    \label{eqn:lem-boht-robust-a-priori:iv} y_{k+1} &= d y'_{k+1} + O_r(d^2 x_{k+1}^{\prime 2}).
  \end{align}
  In particular, we have
  \begin{align}
    \label{eqn:lem-boht-robust-a-priori:final-i} x_{k+1} &= (r-1) d \lambda^2 x_k + O_r(x_k^2),\\
    \label{eqn:lem-boht-robust-a-priori:final-ii} y_{k+1} &= (r-1) d \lambda^4 y_k + O_r(x_k^2).
  \end{align}
\end{lemma}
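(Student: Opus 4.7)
The plan is to prove the four estimates by separately expanding the two operations that compose one step of $\BP$: the broadcast-and-tensor step $M_{P,k} \mapsto M'_{P,k+1} = M_{P,k}^{\times(r-1)} \circ B_{r,\lambda}$, and the $\star$-convolution step $M'_{P,k+1} \mapsto M_{P,k+1} = \bE_{b\sim D}(M'_{P,k+1})^{\star b}$. Estimates \eqref{eqn:lem-boht-robust-a-priori:final-i} and \eqref{eqn:lem-boht-robust-a-priori:final-ii} then follow by combining the four.

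For the broadcast step I would first derive an explicit formula for the signed magnetization $\theta_{M'_{P,k+1}}$ by applying Bayes' rule to the joint distribution of $(\sigma_\rho, \sigma_{v_1}, \ldots, \sigma_{v_{r-1}})$. Parametrizing the signed posterior of each child as $t_i = \sigma_{v_i} \tilde\theta_i$, where $\tilde\theta_i$ are i.i.d.~copies of $\theta_{M_{P,k}}$ independent of $\sigma_{v_i}$, the posterior becomes a rational expression with numerator $\lambda \sum_{|I|\text{ odd}} \prod_{i\in I} t_i$ and denominator $1 + \lambda \sum_{|I|\text{ even},\, I\ne\emptyset} \prod_{i\in I} t_i$. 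The key moment identity $\bE[\prod_{i\in I}\sigma_{v_i} \mid \sigma_\rho = +] = \lambda$ for any nonempty $I \subseteq \{v_1,\ldots,v_{r-1}\}$ (a direct calculation from the definition of $B_{r,\lambda}$, valid for all $\lambda$ in the admissible range) then reduces $\bE \theta_{M'_{P,k+1}}$ and $\bE \theta_{M'_{P,k+1}}^3$ to enumerating monomials in the $t_i$'s. The leading contribution to $\bE \theta_{M'_{P,k+1}}$ comes from $\lambda\,\bE e_1 = (r-1)\lambda^2 x_k$, yielding \eqref{eqn:lem-boht-robust-a-priori:i}; all higher-degree monomials are bounded by $O_r(\lambda^2 x_k^2)$ using $|\lambda|\le 1$, $|\tilde\theta|\le 1$ (so $\bE \tilde\theta^n \le x_k$ for all $n\ge 1$), and $x_k \le 1$. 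The analogous computation for $\bE \theta_{M'_{P,k+1}}^3$ has leading term $\lambda^3\,\bE(\sum_i t_i^3) = (r-1)\lambda^4 y_k$ (coming from the diagonal $t_i^3 = \sigma_{v_i}\tilde\theta_i^3$), giving \eqref{eqn:lem-boht-robust-a-priori:ii}.

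For the $\star$-convolution step I would use the identity $\theta_{P^{\star b}} = \tanh\bigl(\sum_{j=1}^b \arctanh(\theta_j)\bigr)$ for i.i.d.~$\theta_j \sim \theta_P$, and Taylor-expand about zero. The cubic terms from $\arctanh$ (i.e.~$\tfrac{1}{3}\sum\theta_j^3$) and from $\tanh$ (i.e.~$-\tfrac{1}{3}(\sum\theta_j)^3$) combine so that the pure diagonal $\sum_j \theta_j^3$ cancels, leaving $\bE \theta_{P^{\star b}} = b\,\bE \theta_P - b(b-1)(\bE \theta_P)^2 + O_r(b^3 (\bE\theta_P)^3)$. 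Averaging over $b\sim D$ using $\bE b = d$ and $\bE b(b-1) = O_r(d^2)$ (exactly $d^2$ for $\Pois(d)$ and $d(d-1)$ for the point distribution at $d$) gives \eqref{eqn:lem-boht-robust-a-priori:iii}, and the same calculation applied to $\bE \theta_{P^{\star b}}^3$ gives \eqref{eqn:lem-boht-robust-a-priori:iv}.

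The main obstacle is the bookkeeping: in both expansions a substantial number of monomials must be verified to fit inside the stated error terms, and we cannot a priori assume $y_k$ is small compared to $x_k^2$. The a priori bounds are intentionally coarse for this reason; only after they are established can one deduce $y_k = O_r(x_k^2)$ and perform the finer expansion \eqref{eqn:robust-recon-main-expansion} later in the argument. A minor subtlety is that the Taylor expansions are used with finite order and explicit remainders rather than convergent infinite series, relying on $x_k$ being small enough for the remainders to be controlled; this is automatic under the hypotheses of Theorem~\ref{thm:boht-robust}.
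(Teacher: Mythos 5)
Your overall strategy matches the paper's exactly: decompose one $\BP$ step into the broadcast step $M_{P,k}\mapsto M'_{P,k+1}$ and the $\star$-convolution step $M'_{P,k+1}\mapsto M_{P,k+1}$, expand each, and combine. Your bookkeeping for the broadcast step is a pleasant variant: the parametrization $t_i=\sigma_{v_i}\tilde\theta_i$ and the single moment identity $\bE\bigl[\prod_{i\in I}\sigma_{v_i}\bigm|\sigma_\rho=+\bigr]=\lambda$ for nonempty $I$ replace the paper's Lemma~\ref{lem:boht-app:a-priori:moment}, and the rational expression $\theta_{M'}=\lambda N/(1+\lambda D)$ you write down is exactly the paper's Eq.~\eqref{eqn:boht-app:a-priori:i-theta-R} after dividing through by $\alpha(\theta,y)+\alpha(\theta,-y)$. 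The leading-order identifications $(r-1)\lambda^2 x_k$ and $(r-1)\lambda^4 y_k$ are correct, and the cubic-cancellation observation for the $\star$-convolution (the $\tfrac13\sum\theta_j^3$ from $\arctanh$ cancelling the diagonal of $-\tfrac13(\sum\theta_j)^3$ from $\tanh$, so that $y$-dependence drops out) is a genuinely clean way to see why the second-order coefficient depends only on $x$.

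The real gap is in the claim that the remainders are ``automatic.'' Your $\star$-convolution representation $\theta_{P^{\star b}}=\tanh\bigl(\sum_j\arctanh\theta_j\bigr)$ is correct as an identity, but the formal Taylor series of $\tanh\circ(\sum\arctanh)$ about $\theta=0$ does \emph{not} converge uniformly on $[-1,1]^b$, and the Lagrange remainder is uncontrollable because $\arctanh$ blows up at $\pm1$ and those points are in the support of $\theta_P$. The paper sidesteps this by writing $\theta_R$ as the rational function $\frac{\alpha-\alpha^-}{\alpha+\alpha^-}$ (equivalently $\frac{\prod(1+\theta_j)-\prod(1-\theta_j)}{\prod(1+\theta_j)+\prod(1-\theta_j)}$, which is the same function as your $\tanh(\sum\arctanh)$), expanding only the denominator $\frac1{1+z}=1-z+\frac{z^2}{1+z}$ to finite order, and then bounding the remainder term using $|\theta_R|\le1$ to eliminate the denominator entirely, leaving a polynomial in the $\theta_j$'s that can be handled by moment calculations. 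You need that trick (or an equivalent one) in both steps; it is not automatic, and it is the reason the estimates can be stated without any smallness assumption beyond what the $O_r(\cdot)$ notation encodes.

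The second, related gap is in the averaging over $b\sim D$. You invoke only $\bE b=d$ and $\bE b(b-1)=O_r(d^2)$, but after the rational-function expansion the quantities you must average take the form $\bE_b\,(1+x')^b$ and similar exponentials in $b$, and controlling these to the stated $O_r(d^2x'^2)$ precision requires $dx'$ to be bounded and uses a polynomial-truncation estimate (the paper's Lemma~\ref{lem:boht-robust:low-deg-poly}, imported from \cite{mossel2023exact}). This is the \emph{only} place in the whole argument where the specific choice of offspring distribution (point mass or Poisson) enters, so it cannot be glossed over; a generic $D$ with $\bE b=d$ and $\bE b(b-1)=O(d^2)$ but heavy tails would break the estimate. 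Your proposal does not acknowledge this ingredient.
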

Proof of Lemma~\ref{lem:boht-robust:a-priori} is deferred to Section~\ref{sec:boht-app:a-priori}.

Lemma~\ref{lem:boht-robust:a-priori} already implies that robust reconstruction is impossible for $(r-1) d \lambda^2 < 1$. However the robustness parameter $\epsilon$ goes to $0$ as $(r-1) d \lambda^2 \to 1$.
Nevertheless, we can obtain that there exists $\epsilon=\epsilon(r)>0$ such that for $(r-1) d \lambda^2 \le 0.99$, robust reconstruction is impossible with respect to any BMS channel $P$ with $C_{\chi^2}(P) \le \epsilon$.
In the following, assume that $0.99 \le (r-1) d \lambda^2 \le 1$.

Let $\gamma=\gamma(r)>0$ be a large enough constant.
We say a BMS channel $P$ is $\gamma$-normal if
\begin{align}
  \bE \theta_P^4 \le \gamma \left(\bE \theta_P^2\right)^2.
\end{align}
Using Lemma~\ref{lem:boht-robust:a-priori}, we can show that $M_{P,k}$ is $\gamma$-normal for some $k$ not too large.
\begin{lemma} \label{lem:boht-robust:normal}
  For large enough $\gamma=\gamma(r)$ the following holds.
  For any $\epsilon>0$, there exists $\delta=\delta(r,\epsilon)>0$, such that for any BMS channel $P$ with $C_{\chi^2}(P) \le \delta$, for some $k_0\in \bZ_{\ge 0}$, we have
  \begin{align}
    x_{k_0} \le \epsilon, \qquad y_{k_0} \le \gamma x_{k_0}^2.
  \end{align}
\end{lemma}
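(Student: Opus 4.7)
The plan is to track the ratio $\rho_k := y_k/x_k^2$ alongside $x_k$ using the a priori estimates from Lemma~\ref{lem:boht-robust:a-priori}. The key observation is that in the regime $0.99 \le (r-1)d\lambda^2 \le 1$ with $|\lambda|$ bounded away from $1$ (which holds automatically for $r\ge 3$ since then $|\lambda|\le \max(\tfrac{1}{2^{r-1}-1},\tfrac15)\le \tfrac13$; the degenerate $r=2$, $|\lambda|\to 1$ regime can be reduced to classical results of \cite{bleher1995purity,evans2000broadcasting}), we have
$(r-1)d\lambda^4 = \lambda^2\cdot (r-1)d\lambda^2 \le \lambda^2 \le \mu_r$
for some constant $\mu_r=\mu_r(r)<1$. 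Thus the multiplicative factor governing $y_k$ is strictly smaller than the square of the one governing $x_k$, which should yield a contractive recursion of the form $\rho_{k+1}\le c\rho_k+C'_r$ with $c=c(r)<1$, whose fixed point $C'_r/(1-c)$ dictates the value of $\gamma$.

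Concretely, I will proceed in three steps. First, a crude telescoping of $x_{k+1}\le x_k+C_r x_k^2\le x_k e^{C_r x_k}$ shows that if $x_0\le \delta$, then $x_k\le 2\delta$ for all $k\le K$ provided $\delta K\le c_0:=(\log 2)/(2C_r)$. Second, while $x_k\le 2\delta$ with $\delta$ small, I combine $y_{k+1}\le \mu_r y_k+C_r x_k^2$ with the lower bound $x_{k+1}^2\ge ((r-1)d\lambda^2-C_r x_k)^2 x_k^2\ge 0.97\,x_k^2$ (using $(r-1)d\lambda^2\ge 0.99$) to obtain $\rho_{k+1}\le c\rho_k+C'_r$ with $c=\mu_r/0.97<1$. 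Third, since $y_0\le x_0$ gives $\rho_0\le 1/x_0\le 1/\delta$, setting $\gamma:=2C'_r/(1-c)$ and $K:=\lceil\log(2\rho_0/\gamma)/\log(1/c)\rceil=O_r(\log(1/\delta))$ forces $\rho_K\le \gamma$. Taking $\delta\le \epsilon/2$ additionally yields $x_K\le 2\delta\le \epsilon$, so $k_0:=K$ works.

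The main obstacle I anticipate is reconciling Steps~1 and~3: the hypothesis $x_k\le 2\delta$ from Step~1 must survive all $K=O_r(\log(1/\delta))$ iterations required in Step~3, which demands $\delta\log(1/\delta)\le O_r(1)$, achievable only by choosing $\delta$ sufficiently small in terms of $(r,\epsilon)$. A secondary subtlety is that the first-order contraction $\rho_{k+1}\le c\rho_k+C'_r$ breaks down in the $r=2$, $|\lambda|\to 1$ corner (where the broadcast channel degenerates to a sign flip), so that regime must be handled separately — either by invoking classical reconstruction results or by observing directly that $\BP$ preserves $\chi^2$-capacity there. Everything else is routine manipulation of the two-variable recursion in $(x_k,y_k)$.
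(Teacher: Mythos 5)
Your proposal is correct and takes a genuinely different route from the paper. The paper iterates $k_0 = \Theta(1/\delta)$ steps, chosen so that $x_k$ stays within a factor of $2$ of $((r-1)d\lambda^2)^k\delta$, and then explicitly unrolls the $y$-recursion as a geometric-type sum, using the two facts $|\lambda|\le(r-1)d\lambda^2$ (valid since $r\ge 3$ forces $|\lambda|\le 1/3$ while $(r-1)d\lambda^2\ge 0.99$) and $|\lambda|^{k_0}\le\delta$ (valid since $k_0=\omega(\log(1/\delta))$). You instead track the rescaled quantity $\rho_k=y_k/x_k^2$ and derive a direct one-step contraction $\rho_{k+1}\le c\rho_k+C'_r$ with $c=\mu_r/0.97<1$, where $\mu_r\ge(r-1)d\lambda^4/(r-1)d\lambda^2=\lambda^2$; this lets you stop after only $K=O_r(\log(1/\delta))$ iterations rather than $\Theta(1/\delta)$, which makes the compatibility with the crude a priori window $x_k\le 2\delta$ (requiring $\delta K=O_r(1)$) even more comfortable than in the paper. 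The contraction-to-fixed-point picture also makes the origin of $\gamma$ more transparent: it is essentially twice the fixed point $C'_r/(1-c)$. One small slip: you write $\rho_0\le 1/x_0\le 1/\delta$, but $x_0\le\delta$ gives $1/x_0\ge 1/\delta$, not $\le$; this is fixed either by invoking the paper's WLOG normalization $C_{\chi^2}(P)=\delta$ (justified by monotonicity of the lemma in $\delta$), or by running your argument with $x_0$ in place of $\delta$ throughout and noting that $x_0\log(1/x_0)$ is increasing for small $x_0$. Your remark about the $r=2$, $|\lambda|\to 1$ corner is apt and matches the paper's structure: the proof of Theorem~\ref{thm:boht-robust} dispatches $r=2$ at the outset via classical Ising reconstruction results, so when this lemma is invoked one always has $r\ge 3$ and hence $\mu_r\le 1/9$.
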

Proof of Lemma~\ref{lem:boht-robust:normal} is deferred to Section~\ref{sec:boht-app:a-priori}.

Using the $\gamma$-normal property, we can obtain finer estimates.
\begin{lemma}[Fine estimates] \label{lem:boht-robust:fine}
  For large enough $\gamma=\gamma(r)$ the following holds.
  If $M_{P,k}$ is $\gamma$-normal, then
  \begin{align}
    \label{eqn:lem-boht-robust-fine:i} x'_{k+1} &= (r-1)\lambda^2 x_k - (r-1)(r-2) \lambda^3 x_k^2+ O_r(\lambda^2 x_k^3),\\
    \label{eqn:lem-boht-robust-fine:ii} x_{k+1} &= d x'_{k+1} - \left(\bE_{b\sim d} b(b-1)\right) x_{k+1}^{\prime 2} + O_r(d^3 x_{k+1}^{\prime 3}).
  \end{align}
  In particular,
  \begin{align} \label{eqn:lem-boht-robust-fine:final}
    x_{k+1} &= (r-1) d \lambda^2 x_k -  ((r-1)(r-2)d\lambda^3 + (r-1)^2\left(\bE_{b\sim D} b(b-1)\right) \lambda^4) x_k^2+ O_r(x_k^3).
  \end{align}
  Furthermore, $M_{P,k+1}$ is $\gamma$-normal.
\end{lemma}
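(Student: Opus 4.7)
The plan is to decompose the BP step into two sub-steps, $M_{P,k} \mapsto M'_{P,k+1}$ and $M'_{P,k+1} \mapsto M_{P,k+1}$, and perform a careful Taylor-style expansion in each. By BMS symmetry, I condition on the root label being $+$ throughout. For the first sub-step, a direct Bayes computation using the mixture decomposition $B_{r,\lambda} = \lambda \cdot \mathrm{Copy} + (1-\lambda) \cdot \mathrm{Uniform}$ gives
\[
\theta_{M'_{P,k+1}} = \frac{\lambda(e_1 + e_3 + \cdots)}{1 + \lambda(e_2 + e_4 + \cdots)},
\]
where $e_j = e_j(T_1,\ldots,T_{r-1})$ is the $j$-th elementary symmetric polynomial in i.i.d.~child observations $T_i \sim \theta_{M_{P,k}}$. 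The same mixture decomposition yields a clean joint moment identity: conditional on the root, $\bE[\prod_i T_i^{n_i}]$ equals $\prod_i \alpha_{n_i}$ when every $n_i$ is even and $\lambda \prod_i \alpha_{n_i}$ otherwise, where $\alpha_j = \bE \theta_{M_{P,k}}^j$. The BMS identity $\alpha_{2\ell-1} = \alpha_{2\ell}$ then gives $\alpha_1 = \alpha_2 = x_k$ and $\alpha_3 = \alpha_4 = y_k \le \gamma x_k^2$.

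To establish Eq.~\eqref{eqn:lem-boht-robust-fine:i}, I expand $\theta_{M'}^2 = a^2(1 - 2b + O(b^2))$ with $a = \lambda(e_1 + e_3 + \cdots)$ and $b = \lambda(e_2 + e_4 + \cdots)$, and compute each expectation via the joint moment identity. The leading $\lambda^2 \bE e_1^2$ evaluates to $(r-1)\lambda^2 x_k + (r-1)(r-2)\lambda^3 x_k^2$, picking up a $+\lambda^3 x_k^2$ piece from the off-diagonal moments $\bE T_i T_j = \lambda x_k^2$. The correction $-2\lambda^3 \bE e_1^2 e_2$ is dominated by the $\{a,a,b,b\}$ matching pattern (the only four-index multiset with all even $n_i$ at the relevant order), which by the even-branch formula contributes $\alpha_2^2 = x_k^2$ per term without an extra $\lambda$, yielding $-2(r-1)(r-2)\lambda^3 x_k^2$. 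The sum gives the claimed $-(r-1)(r-2)\lambda^3 x_k^2$. Every remaining contribution carries either a $y_k = \alpha_4$ factor (bounded via $\gamma$-normality), at least three distinct indices (forcing a $x_k^3$ factor), a higher $e_j$ with $j \ge 3$, or a higher power of $b$; each is $O_r(\lambda^2 x_k^3)$.

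For Eq.~\eqref{eqn:lem-boht-robust-fine:ii}, the $\star$-convolution gives $\theta_{M_{P,k+1}} = \tanh\bigl(\sum_{j=1}^b \arctanh(T'_j)\bigr)$ with $T'_j$ i.i.d.~copies of $\theta_{M'_{P,k+1}}$. Expanding $\arctanh(t) = t + t^3/3 + O(t^5)$ and $\tanh^2(z) = z^2 - \tfrac{2}{3}z^4 + O(z^6)$, the key cancellation is that the $\tfrac{2}{3} b y'_{k+1}$ contribution from $\bE U_j^2$ in $\bE z^2$ exactly cancels the $-\tfrac{2}{3} b y'_{k+1}$ from the single-index $(4)$-pattern in $-\tfrac{2}{3}\bE(\sum T'_j)^4$. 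What remains is $\bE\theta_{M_{P,k+1}}^2 = b x'_{k+1} - b(b-1) x_{k+1}^{\prime 2} + O_r(b^3 x_{k+1}^{\prime 3})$, the $-b(b-1)$ coefficient arising from the off-diagonal pairing in $\bE(\sum T'_j)^2$ minus twice the $(2,2)$ Wick pairing in $\bE(\sum T'_j)^4$. Averaging over $b \sim D$ gives Eq.~\eqref{eqn:lem-boht-robust-fine:ii}; substituting Eq.~\eqref{eqn:lem-boht-robust-fine:i} yields Eq.~\eqref{eqn:lem-boht-robust-fine:final}. To close the induction I run the analogous (coarser) expansion for $y_{k+1}$, obtaining $y_{k+1} = (r-1)d\lambda^4 y_k + O_r(x_k^2)$, which combined with $x_{k+1} \sim (r-1)d\lambda^2 x_k$ and the assumption $y_k \le \gamma x_k^2$ yields $y_{k+1} \le \gamma x_{k+1}^2$ for $\gamma$ sufficiently large in terms of $r$ and $x_k$ sufficiently small. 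The main obstacle is the combinatorial bookkeeping in the Step~2 expansion (tracking which moments pick up an extra $\lambda$ via the joint moment formula) together with the exact $by'$ cancellation in Step~3; missing either would corrupt the $(r-1)(r-2)$ coefficient in Eq.~\eqref{eqn:lem-boht-robust-fine:final} or leave an uncontrolled $b(x')^2$ error.
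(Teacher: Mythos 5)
Your decomposition into the two sub-steps, the joint-moment formula, the target coefficients $(r-1)(r-2)$ and $\bE_{b\sim D}b(b-1)$, the crucial $\tfrac{2}{3}by'$ cancellation in the $\star$-convolution step, and the propagation of $\gamma$-normality all match the paper's proof; conceptually this is the same argument. Two substantive differences are worth flagging. First, you describe the $T_i$ as ``i.i.d.~$\sim \theta_{M_{P,k}}$,'' which is inaccurate: they are only conditionally i.i.d.~given the hidden child labels $z$, and it is precisely the $z$-correlation through $B_{r,\lambda}$ that produces the extra $\lambda$ factor in the odd-moment case of your joint-moment identity. You use the identity correctly, but the phrasing is misleading. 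Second, and more seriously, your Step~(ii) expansion through $z = \sum_j \arctanh(T'_j)$ has a rigor gap: $\arctanh(T'_j)$ is unbounded as $T'_j \to \pm 1$, and the Taylor remainder of $\tanh^2(z)$ is not $O(z^6)$ uniformly in $z$ (for large $|z|$ it grows like $z^4$, since $\tanh^2$ saturates at $1$). So $\bE[\,\text{remainder}\,]$ is not obviously $O_r(d^3 x_{k+1}^{\prime 3})$ without an a priori bound keeping $T'_j$ away from $\pm1$. The paper sidesteps this entirely by expanding the exact rational expression $\theta_R(y) = (A-A')/(A+A')$ with $A = \alpha(\theta,y)$, using the algebraic identity $1/(1+x) = 1 - x + x^2 - x^3 + x^4/(1+x)$, and then bounding the remainder term through the uniform inequality $|\theta_R(y)| \le 1$. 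Your Step~(i) expansion has a milder version of the same issue: ``$a^2(1-2b+O(b^2))$'' as written only gives an $O_r(\lambda^2 x_k^2)$ error; you need one more term of the rational expansion, again with the tail bounded via $|\theta_{M'}| \le 1$, to reach $O_r(\lambda^2 x_k^3)$. Finally, when you average over $b\sim D$, the Poisson case requires a lemma controlling $\bE_b[(1+x)^b]$ by its low-degree truncation (the paper's Lemma~\ref{lem:boht-robust:low-deg-poly}); this should be cited explicitly, since it is the only place the choice of offspring distribution enters.
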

Proof of Lemma~\ref{lem:boht-robust:fine} is deferred to Section~\ref{sec:boht-app:fine}.

\begin{proof}[Proof of Theorem~\ref{thm:boht-robust}]

  If $\lambda\ge \frac 15$, then the result follows from the reconstruction threshold for the BOHT model (\cite[Theorem 2(iii)]{gu2023weak}). In the following assume $\lambda\in \left[-\frac 1{2^{r-1}-1}, \frac 15\right]$.

  If $(r-1) d \lambda^2 \le 0.99$, then the result follows from Eq.~\eqref{eqn:lem-boht-robust-a-priori:final-i} by taking $\epsilon>0$ small enough. In the following assume $0.99 \le (r-1) d \lambda^2 \le 1$.

  In Eq.~\eqref{eqn:lem-boht-robust-fine:final}, we have
  \begin{align} \label{eqn:proof-boht-robust:step-1}
    &~(r-1)(r-2)d\lambda^3 + (r-1)^2\left(\bE_{b\sim D} b(b-1)\right) \lambda^4 \\
    \nonumber \ge&~  (r-1)(r-2)d\lambda^3 + (r-1)^2 d (d-1) \lambda^4 \\
    \nonumber =&~ (r-1) d \lambda^2 ((r-1) d \lambda^2 + (r-2) \lambda - (r-1) \lambda^2).
  \end{align}
  For $\lambda\in \left[-\frac 1{2^{r-1}-1},0\right]$, we have
  \begin{align}
    \eqref{eqn:proof-boht-robust:step-1} \ge 0.99 \cdot \left(0.99 - \frac{r-2}{2^{r-1}-1} - \frac{r-1}{(2^{r-1}-1)^2}\right) \ge 0.4.
  \end{align}
  For $\lambda\in \left[0,\frac 15\right]$, we have
  \begin{align}
    \eqref{eqn:proof-boht-robust:step-1} \ge 0.99 \cdot \left(0.99 + \lambda ((r-2)-(r-1)\lambda)\right) \ge 0.99^2.
  \end{align}
  Therefore $\eqref{eqn:proof-boht-robust:step-1} \ge 0.4$ always holds.

  Take $\gamma$ large enough such that Lemma~\ref{lem:boht-robust:normal} and Lemma~\ref{lem:boht-robust:fine} hold.
  Take $\epsilon$ such that Eq.~\eqref{eqn:lem-boht-robust-fine:final} has no fixed point for all $x\le \epsilon$. Such $\epsilon$ exists because the coefficient of $x^2$ is upper bounded by $-0.4$.
  Then for any $\gamma$-normal BMS channel $P$ with $C_{\chi^2}(P) \le \epsilon$, robust reconstruction with respect to $P$ is impossible.

  Take $\delta$ such that Lemma~\ref{lem:boht-robust:normal} holds.
  Then for any BMS channel $P$ with $C_{\chi^2}(P)\le \delta$, by Lemma~\ref{lem:boht-robust:normal}, for some $k_0\in \bZ_{\ge 0}$, $M_{k_0,P}$ is $\gamma$-normal and $C_{\chi^2}(M_{k_0,P}) \le \epsilon$, so robust reconstruction with respect to $M_{k_0,P}$, thus $P$, is impossible.
  This finishes the proof.
\end{proof}

\subsection{Finishing the proof} \label{sec:boht:main}
We are now ready to prove Theorem~\ref{thm:boht-r56}.
\begin{proof}[Proof of Theorem~\ref{thm:boht-r56}]
  Take $\epsilon=\epsilon(r)$ in Theorem~\ref{thm:boht-robust}.
  Take $d_0=d_0(r,\epsilon)$ in Prop.~\ref{prop:large-deg-asymp-r56}.
  Fix $d\ge d_0$, $\lambda\in \left[-\frac 1{2^{r-1}-1}, 1\right]$ such that $(r-1) d \lambda^2 \le 1$.
  By Prop.~\ref{prop:large-deg-asymp-r56}, there exists $k_0$ such that $C_{\chi^2}(M_{k_0}) \le \epsilon$.
  By Theorem~\ref{thm:boht-robust}, we have
  \begin{align}
    \lim_{k\to \infty} C_{\chi^2}(M_k)
    = \lim_{k\to \infty} C_{\chi^2}(M_{k+k_0})
    = \lim_{k\to \infty} C_{\chi^2}(\BP^k(M_{k_0}))
    = 0.
  \end{align}
\end{proof}

\section{HSBM weak recovery upper bound} \label{sec:hsbm-upper}
In this section we prove Theorem~\ref{thm:hsbm-upper}.
We say an event happens a.a.s.~(asymptotically almost surely) if it happens with probability $1-o(1)$ as $n\to \infty$.

\begin{proof}[Proof of Theorem~\ref{thm:hsbm-upper}]
  We first describe our algorithm. Suppose we are given a HSBM instance $G=(V,E)$.
  We say a partition $Y: V \to \{\pm\}$ is almost balanced if $\left|\# Y^{-1}(+) - \frac n2 \right| \le n^{0.6}$.
  We say a hyperedge $e\in E$ is in-community with respect to $Y$ if $Y_v$s are equal for all $v\in e$.
  We say a partition $Y: V \to \{\pm\}$ is good if the number of in-community hyperedges is in $\left[\frac{an}{2^{r-1} r} - n^{0.6}, \frac{an}{2^{r-1} r} + n^{0.6}\right]$, and bad otherwise.
  The algorithm looks for a good almost balanced partition. If such a partition exists, return any of them; otherwise report failure.

  We say a partition $Y$ is correlated with $X$ (the true partition) if $d_H(Y, X) \le \left(\frac 12-\epsilon\right) n$, and uncorrelated with $X$ otherwise.
  We prove that for some $\epsilon>0$, a.a.s.~the following are true.
  \begin{enumerate}[label=(\alph*)]
    \item \label{item:proof-thm-hsbm-upper:i} There exists a good almost balanced partition.
    \item \label{item:proof-thm-hsbm-upper:ii} All almost balanced partitions that are uncorrelated with $X$ are bad.
  \end{enumerate}
  If \ref{item:proof-thm-hsbm-upper:i} holds, our algorithm returns a good partition. If in addition \ref{item:proof-thm-hsbm-upper:ii} holds, the returned partition is correlated with the true partition $X$, and the algorithm succeeds.
  It remains to prove \ref{item:proof-thm-hsbm-upper:i} and \ref{item:proof-thm-hsbm-upper:ii}.

  \paragraph{Proof of \ref{item:proof-thm-hsbm-upper:i}}
  We show that $X$ is a.a.s.~a good almost balanced partition.
  By the central limit theorem, $X$ is a.a.s.~almost balanced.
  For such $X$, the number of in-community sets $S\in \binom{V}r$ is $\binom{|X|}r+\binom{n-|X|}r = (1\pm O(n^{-1}))\frac{n^r}{2^{r-1}\cdot r!}$. Each such set is an hyperedge independently with probability $\frac a{\binom{n}{r-1}} = (1\pm O(n^{-1}))\frac{a (r-1)!}{n^{r-1}}$.
  By Chebyshev's inequality, a.a.s.~the number of in-community hyperedges is in $\left[\frac{an}{2^{r-1} r} - n^{0.6}, \frac{an}{2^{r-1} r} + n^{0.6}\right]$.

  \paragraph{Proof of \ref{item:proof-thm-hsbm-upper:ii}}
  By ignoring events that happen with probability $o(1)$, we make the following assumptions.
  \begin{enumerate}
    \item $X$ is almost balanced.
    \item The total number of hyperedges $m$ is in $\left[ \frac{dn}r - n^{0.6}, \frac {dn}r - n^{0.6}\right]$.
  \end{enumerate}

  We show that \ref{item:proof-thm-hsbm-upper:ii} is true conditioned on $m$.
  We consider the following random hypergraph model $G(n,m,X)$. For each of the $m$ hyperedge, generate a sequence $(x_1,\ldots,x_r)$ of $r$ bits according to the following distribution
  \begin{align}
    p_{x_1,\ldots,x_r} = \left\{
      \begin{array}{ll}
        \frac 12 \cdot \frac{a}{a-b+2^{r-1} b}, & \text{if}~x_1=\cdots=x_r,\\
        \frac 12 \cdot \frac{b}{a-b+2^{r-1} b}, & \text{o.w.}
      \end{array}
    \right.
  \end{align}
  Then choose $r$ vertices $v_1,\ldots,v_r$ independently according to $v_i\sim \Unif(X^{-1}(x_i))$, and add hyperedge $e=(v_1,\ldots,v_r)$ to the hypergraph.

  The hypergraph $G(n,m,X)$ is not necessarily simple and may have degenerate hyperedges (i.e., hyperedges with less than $r$ distinct vertices). However, with constant probability, this does not happen. Furthermore, conditioned on that $G(n,m,X)$ is simple, it has the same distribution as $\HSBM(n,r,a,b)$ conditioned on $X$ and $m$.
  Therefore, any event that happens a.a.s.~for $G(n,m,X)$ also happens a.a.s.~for $\HSBM(n,r,a,b)$ conditioned on $X$ and $m$.

  Now consider an almost balanced partition $Y$ uncorrelated with $X$.
  Say $d_H(Y, X) = \left(\frac 12-\delta\right)n$ for some $\delta \le \epsilon$.
  By possibly replacing $Y$ with $-Y$, WLOG assume that $\#\{v\in V: X_v\ne Y_v\}=\left(\frac 12-\delta\right)n$.
  For $x,y\in \{\pm\}$, let $c_{x,y}$ denote the number of vertices $v\in V$ such that $X_v=x$ and $Y_v=y$.
  Then
  \begin{align}
    &c_{++} = \left(\frac 14 + \frac{\delta}2\right)n \pm O(n^{0.6}),
    &c_{+-} = \left(\frac 14 - \frac{\delta}2\right)n \pm O(n^{0.6}), \\
    \nonumber &c_{-+} = \left(\frac 14 - \frac{\delta}2\right)n \pm O(n^{0.6}),
    &c_{--} = \left(\frac 14 + \frac{\delta}2\right)n \pm O(n^{0.6}).
  \end{align}
  Consider the process of generating one hyperedge under $G(n,m,X)$.
  Let $y_i = Y_{v_i}$. Then we have
  \begin{align}
    \bP[y_i=+|x_i=+] = \frac 12 + \delta \pm O(n^{-0.4}), \qquad
    \bP[y_i=+|x_i=-] = \frac 12 - \delta \pm O(n^{-0.4}).
  \end{align}
  Therefore, under $G(n,m,X)$, each hyperedge is in-community with respect to $Y$ with probability
  \begin{align}
    &~\sum_{x_1,\ldots,x_r\in \{\pm\}} p_{x_1,\ldots,x_r} \left(
      \prod_{i\in [r]} \bP[y_i=+|x_i]
      + \prod_{i\in [r]} \bP[y_i=-|x_i]
    \right) \\
    \nonumber =&~ \sum_{0\le j\le r} p_{+^j -^{r-j}} \binom rj \left(\left( \frac 12 + \delta\right)^j \left( \frac 12 - \delta\right)^{r-j} + \left( \frac 12 - \delta\right)^j \left( \frac 12 + \delta\right)^{r-j}\right) \pm O(n^{-0.4}) \\
    \nonumber  =&~ \frac{1-\lambda}{2^{r-1}} + \lambda \left( \left(\frac 12 + \delta\right)^r + \left(\frac 12 - \delta\right)^r \right) \pm O(n^{-0.4}).
  \end{align}
  The last value can be arbitrarily close to $\frac 1{2^{r-1}}$ for small enough $\delta>0$.

  By Sanov's theorem, the probability that there are $\left[\frac{an}{2^{r-1} r} - n^{0.6}, \frac{an}{2^{r-1} r} + n^{0.6}\right]$ in-community hyperedges with respect to $Y$ is
  \begin{align}
    \exp\left( -\frac{n d}{r} \cdot d_{\KL}\left(\lambda + \frac{1-\lambda}{2^{r-1}} \left\| \frac{1-\lambda}{2^{r-1}} + \lambda \left( \left(\frac 12 + \delta\right)^r + \left(\frac 12 - \delta\right)^r \right) \right. \right) \pm o(n)\right).
  \end{align}
  By Eq.~\eqref{eqn:thm-hsbm-upper:cond}, for $\epsilon>0$ close enough to $0$, the last expression is
  $
    \exp\left(- n (\log 2 + \epsilon') \pm o(n) \right)
  $
  for some $\epsilon'>0$.
  Therefore every uncorrelated balanced partition $Y$ is good with probability $\exp\left(- n (\log 2 + \epsilon') \pm o(n) \right)$.
  On the other hand, the total number of partitions is $2^n$.
  By union bound, a.a.s.~there are no good uncorrelated balanced partitions.
  This finishes the proof of \ref{item:proof-thm-hsbm-upper:ii}.
\end{proof}

\ifdefined\isarxiv
\section*{Acknowledgments}
Y.G.~is supported by the National Science Foundation under Grant No. DMS-1926686. We thank anonymous reviewers for helpful comments and suggestions.
\else
\acks{Y.G.~is supported by the National Science Foundation under Grant No. DMS-1926686. We thank anonymous reviewers for helpful comments and suggestions.}
\fi

\ifdefined\isarxiv
\bibliographystyle{alpha}
\fi
\bibliography{ref}

\appendix

\section{Reconstruction on hypertrees} \label{sec:boht-app}

\subsection{More preliminaries} \label{sec:boht-app:prelim}
Let $P: \cX \to \cY$ and $Q: \cX \to \cZ$ be two channels.
We say $P$ is more degraded than $Q$, denoted $P\le_{\deg} Q$, if there exists a channel $R: \cZ \to \cY$ such that $P=R\circ Q$.
If $P$ and $Q$ are BMSs, then it is possible to take $R$ such that it respects the involution structure of BMSs.

For two BMS channels $P$ and $Q$, $P\le_{\deg} Q$ if and only if there exists a coupling between $\Delta_P$ and $\Delta_Q$ such that
$
  \bE[\Delta_Q | \Delta_P] \le \Delta_P
$
almost surely.

For two BMS channels $P$ and $Q$, if $P\le_{\deg} Q$, then
\begin{align}
  C_{\chi^2}(P) \le C_{\chi^2}(Q).
\end{align}
This is known as the data processing inequality (DPI).

Degradation relationship is preserved under pre-composition, $\star$-convolution, tensor product, and mixtures.

\begin{lemma} \label{lem:boht-monotone}
  Consider two BOHT models $\BOHT(r,\lambda,D)$ and $\BOHT(r,c \lambda,D)$ for some $c\in [0,1]$.
  Let $\BP$ and $\wt \BP$ be the corresponding $\BP$ operators.
  Then for any BMS channel $P$, we have $\wt \BP(P) \le_{\deg} \BP(P)$.
  In particular, we have
  \begin{align}
    \limsup_{k\to \infty} C_{\chi^2}(\BP^k(P)) \ge \limsup_{k\to \infty} C_{\chi^2}(\wt \BP^k(P)).
  \end{align}
  In other words, if robust reconstruction with respect to $P$ is impossible for $\BOHT(r,\lambda,D)$, then robust reconstruction with respect to $P$ is impossible for $\BOHT(r,c \lambda,D)$.
\end{lemma}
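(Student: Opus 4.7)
The plan is to reduce everything to the comparison $B_{r,c\lambda}\le_{\deg} B_{r,\lambda}$ at the level of the single broadcast channel, and then propagate it through the BP recursion. First I would exhibit an explicit degrading channel $R\colon \{\pm\}^{r-1}\to\{\pm\}^{r-1}$ that, with probability $c$, passes its input through unchanged, and with probability $1-c$ replaces it by a uniformly random string. A routine check on the two types of outputs (``all coordinates equal to $x$'' versus ``not all equal'') in Eq.~\eqref{eqn:broadcast-channel} shows $R\circ B_{r,\lambda} = B_{r,c\lambda}$; concretely, for any $y'$ and $x$,
\[
(R\circ B_{r,\lambda})(y'|x) = c\cdot B_{r,\lambda}(y'|x) + (1-c)\cdot\frac{1}{2^{r-1}} = B_{r,c\lambda}(y'|x).
\]
This step uses nothing about the sign of $\lambda$, so it covers the full range $\lambda\in\left[-\frac{1}{2^{r-1}-1},1\right]$.

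The delicate step is translating this into a degradation relation for the whole BP operator, because in $\BP(P)=\bE_{b\sim D}\,(P^{\times(r-1)}\circ B_{r,\lambda})^{\star b}$ the broadcast channel sits \emph{inside} a composition with $P^{\times(r-1)}$, so the degrading channel $R$ lands in the middle rather than on the output side. I would resolve this by showing $P^{\times(r-1)}\circ R = S\circ P^{\times(r-1)}$, where $S\colon \cY^{r-1}\to\cY^{r-1}$ passes its input through unchanged with probability $c$ and otherwise outputs an independent fresh sample from the law of $P^{\times(r-1)}$ on a uniform input. Both sides produce the same mixture of the conditional law $P^{\times(r-1)}(\cdot|y)$ and a fixed input-independent law with weights $c$ and $1-c$. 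Hence $P^{\times(r-1)}\circ B_{r,c\lambda} = S\circ(P^{\times(r-1)}\circ B_{r,\lambda})$. Applying preservation of degradation under $\star$-convolution and under mixtures over $b\sim D$ then yields $\wt\BP(P)\le_{\deg}\BP(P)$.

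Finally I would iterate. The operator $\BP$ is itself monotone in its input with respect to $\le_{\deg}$: if $A\le_{\deg} B$ via a degrading channel $T$, then $A^{\times(r-1)}\le_{\deg} B^{\times(r-1)}$ via $T^{\times(r-1)}$, and post-composition with $B_{r,\lambda}$, $\star$-convolution, and mixtures all preserve degradation. A straightforward induction combining this input-monotonicity with the one-step comparison gives $\wt\BP^k(P)\le_{\deg}\BP^k(P)$ for all $k$. The data processing inequality then yields $C_{\chi^2}(\wt\BP^k(P))\le C_{\chi^2}(\BP^k(P))$, from which the $\limsup$ inequality follows, and the robust reconstruction statement is immediate from the definition.

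The main obstacle is the commutation argument in the middle step: for a \emph{general} degrading channel $T$ one cannot push $T$ across $P^{\times(r-1)}$, but here the specific structure of $R$ as a convex combination of the identity channel and a constant (uniform-output) channel is exactly what makes the commutation possible, and $R$ has this form precisely because $B_{r,\lambda}$ is itself the $\lambda$-mixture of the ``copy-parent'' channel and the uniform channel.
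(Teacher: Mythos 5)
Your proof is correct and, despite the extra intermediate step, arrives at essentially the same construction as the paper: the paper directly writes down the degrading channel $R$ on $\cY^{r-1}$ (identity with probability $c$, otherwise a fresh sample from $(P\circ\Unif(\{\pm\}))^{\times(r-1)}$), which is exactly your $S$, and then appeals to preservation of $\le_{\deg}$ under $\star$-convolution and mixtures. Your additional scaffolding — first exhibiting $B_{r,c\lambda}=R_0\circ B_{r,\lambda}$ on $\{\pm\}^{r-1}$ and then commuting $R_0$ past $P^{\times(r-1)}$ to get $S$ — makes explicit the ``it is easy to check'' step the paper leaves to the reader, and your closing remark about input-monotonicity of $\BP$ correctly supplies the induction needed for the $\limsup$ inequality, which the paper also leaves implicit.
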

\begin{proof}
  Recall that
  \begin{align}
    \BP(P) = \bE_{b\sim D} (P^{\times (r-1)} \circ B_{r,\lambda})^{\star b},\qquad \wt \BP(P) = \bE_{b\sim D} (P^{\times (r-1)} \circ B_{r,c \lambda})^{\star b}.
  \end{align}
  Because $\le_{\deg}$ is preserved under mixtures and $\star$-convolution, it suffices to prove that
  \begin{align}
    P^{\times (r-1)} \circ B_{r,c \lambda} \le_{\deg} P^{\times (r-1)} \circ B_{r,\lambda}.
  \end{align}
  Let $\cY$ be the output alphabet of $P$. Then the output alphabets of $P^{\times (r-1)} \circ B_{r,\lambda}$ and $P^{\times (r-1)} \circ B_{r,c \lambda}$ are $\cY^{r-1}$.
  We construct a channel $R: \cY^{r-1} \to \cY^{r-1}$ such that
  \begin{align} \label{eqn:proof-lem-boht-monotone:deg}
    P^{\times (r-1)} \circ B_{r, c\lambda} = R \circ P^{\times (r-1)} \circ B_{r,\lambda}.
  \end{align}
  Let $R$ be the following channel: on input $y^{r-1}\in \cY^{r-1}$, it outputs $y^{r-1}$ with probability $c$, and outputs a sample from $(P\circ \Unif(\{\pm\}))^{\times (r-1)}$ with probability $1-c$.
  It is easy to check that Eq.~\eqref{eqn:proof-lem-boht-monotone:deg} holds.
\end{proof}

Let $P: \{\pm\} \to \cY$ be a BMS channel with a discrete target space $\cY$. For $x\in \{\pm\}$, we use $P_x$ to denote the distribution $P(\cdot | x)$.
The $\theta$-component $\theta_P$ is directly related with the distribution $P_+$.
For $y\in \cY$, define
\begin{align}
  \theta_P(y)=\frac{P_+(y)-P_-(y)}{P_+(y)+P_-(y)}.
\end{align}
Then the distribution of $\theta_P(y)$ for $y\sim P_+$ is exactly the same as the distribution of $\theta_P$.
In particular,
\begin{align}
  C_{\chi^2}(P) &= \bE_{y\sim P_+} \theta_P(y).
\end{align}

\subsection{Large degree asymptotics} \label{sec:boht-app:large-deg-asymp}

\begin{proof}[Proof of Lemma~\ref{lem:large-step-g-property}]
  Let $g: [0, 1]\to [0, 1]$ be the function
  \begin{align} \label{eqn:large-step:g}
    g(s) = \bE_{Z\sim \cN(0,1)} \tanh(s + \sqrt s Z).
  \end{align}
  By \cite[Lemma 4.4]{sly2011reconstruction}, $g$ is strictly increasing and continuously differentiable on $[0, 1]$.

  \paragraph{Proof of \ref{item:lem-large-step-g-property:i}}
  Because $g$ is strictly increasing, the result follows from that $s_{r,w}(x)$ (Eq.~\eqref{eqn:thm-boht-asymp-srw}) is non-decreasing in $r\in \bZ_{\ge 2}$ (strict when $w\ne 0$ and $x\ne 0$), $w\in \bR_{\ge 0}$ (strict when $x\ne 0$), and $x\in [0, 1]$ (strict when $w\ne 0$).

  \paragraph{Proof of \ref{item:lem-large-step-g-property:ii}}
  It suffices to prove that $g(s) \le s$ for all $s\in [0, 1]$, and the inequality is strict when $s\ne 0$.
  We have
  \begin{align}
    g(s) &= \bE_{Z\sim \cN(0,1)} \tanh(s + \sqrt s Z) \\
    \nonumber &= \bE_{Z\sim \cN(0,1)} \frac 12 \left(\tanh(s + \sqrt s Z) + \tanh(s - \sqrt s Z) \right) \\
    \nonumber &\le \tanh(s) \le s.
  \end{align}
  The third step is by Lemma~\ref{lem:tanh-technical}.
  The last step is strict when $s\ne 0$.
\end{proof}

\begin{lemma} \label{lem:tanh-technical}
  For $x,y\in \bR$, if $x+y\ge 0$, then $\tanh x + \tanh y \le 2 \tanh \frac{x+y}2$.
\end{lemma}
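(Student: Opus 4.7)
The plan is to reduce the inequality to the elementary fact $\cosh(x-y)\ge 1$ by rewriting both sides using hyperbolic identities on a common denominator. The key is to note that both $\tanh x + \tanh y$ and $2\tanh\!\left(\frac{x+y}{2}\right)$ have $\sinh(x+y)$ as their numerator once placed over a suitable common form, so the assumption $x+y\ge 0$ (equivalently $\sinh(x+y)\ge 0$) lets us compare only the denominators.

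First I would recall the sum formula
\begin{align*}
  \tanh x + \tanh y = \frac{\sinh(x+y)}{\cosh x \cosh y},
\end{align*}
which follows from combining $\frac{\sinh x}{\cosh x} + \frac{\sinh y}{\cosh y}$ over a common denominator and applying the identity $\sinh x\cosh y + \cosh x\sinh y = \sinh(x+y)$. Next I would rewrite the right-hand side using the double-angle formula $\sinh(x+y) = 2\sinh\!\left(\frac{x+y}{2}\right)\cosh\!\left(\frac{x+y}{2}\right)$ to get
\begin{align*}
  2\tanh\!\left(\tfrac{x+y}{2}\right) = \frac{2\sinh\!\left(\frac{x+y}{2}\right)\cosh\!\left(\frac{x+y}{2}\right)}{\cosh^2\!\left(\frac{x+y}{2}\right)} = \frac{\sinh(x+y)}{\cosh^2\!\left(\frac{x+y}{2}\right)}.
\end{align*}

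Since $x+y\ge 0$ gives $\sinh(x+y)\ge 0$, the desired inequality reduces to showing
\begin{align*}
  \cosh x \cosh y \ge \cosh^2\!\left(\tfrac{x+y}{2}\right).
\end{align*}
At this point I would apply the product-to-sum identity $\cosh x \cosh y = \tfrac{1}{2}(\cosh(x+y)+\cosh(x-y))$ together with the half-angle identity $\cosh^2\!\left(\frac{x+y}{2}\right) = \tfrac{1}{2}(1+\cosh(x+y))$. Subtracting one from the other collapses the $\cosh(x+y)$ terms and leaves the inequality $\cosh(x-y)\ge 1$, which holds for every real argument.

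There is no real obstacle here; the only subtle point is making sure to use the sign assumption $x+y\ge 0$ exactly once, at the moment of dividing through by $\sinh(x+y)$ (or more precisely, of multiplying through by $\cosh x\cosh y\cdot\cosh^2((x+y)/2)$ and noting that the retained inequality goes the right way). If $\sinh(x+y)=0$, i.e.\ $x=-y$, the claim is an equality, so the proof is automatically tight at the boundary. The method is essentially a one-line hyperbolic manipulation, so I expect the entire proof to be a handful of lines.
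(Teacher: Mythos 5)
Your proof is correct, and it takes a genuinely different route from the paper's. The paper writes $\tanh x + \tanh y = 2\tanh\frac{x+y}{2}\cdot\frac{1+\tanh x\tanh y}{1+\tanh^2\frac{x+y}{2}}$, reduces to $\tanh x\tanh y \le \tanh^2\frac{x+y}{2}$, and then splits into cases: if $xy<0$ the left side is negative, and if $x,y\ge 0$ it appeals to concavity of $\log\tanh$ on $\bR_{>0}$. Your argument instead clears denominators in terms of $\sinh$ and $\cosh$: both sides share the numerator $\sinh(x+y)\ge 0$, so the inequality reduces to $\cosh x\cosh y \ge \cosh^2\frac{x+y}{2}$, which by the product-to-sum and half-angle identities collapses to $\cosh(x-y)\ge 1$ with no case analysis and no concavity input. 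Your version is a bit more elementary and self-contained (it avoids the slightly nontrivial fact that $\log\tanh$ is concave on the positive reals, and avoids the case split), while the paper's version exposes that the inequality is really a midpoint-concavity statement in disguise. Both are valid; the sign hypothesis $x+y\ge 0$ enters at exactly the analogous point in each — when factoring out the nonnegative quantity $\tanh\frac{x+y}{2}$ in the paper, and when factoring out $\sinh(x+y)$ in yours.
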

\begin{proof}
  We have
  \begin{align}
    \tanh x + \tanh y = 2\tanh \frac{x+y}2 \cdot \frac{1+\tanh x \tanh y}{1+\tanh^2 \frac{x+y}2}.
  \end{align}
  It suffices to prove that $\tanh x \tanh y \le \tanh^2 \frac{x+y}2$.
  If $x y<0$ then LHS is $<0$ and the inequality holds.
  If $x,y\ge 0$, then the inequality follows because $\log \tanh x$ is concave on $\bR_{>0}$,
\end{proof}

\begin{proof}[Proof of Lemma~\ref{lem:large-step-g-contract-r56}]
  Recall function $g$ defined in Eq.~\eqref{eqn:large-step:g}.
  Because $g$ is strictly increasing on $[0, 1]$ and $s_{r,\frac 1{r-1}}(x)$ is non-decreasing in $r\in \{2,3,4,5,6\}$ for $0\le x\le 1$, $g_{r,\frac 1{r-1}}(x)=g\left(s_{r,\frac 1{r-1}}(x)\right)$ is non-decreasing in the same range.
  Therefore it suffices to prove the statement for $r=6$.

  Our goal is to show that
  \begin{align}
    g\left( x + 2x^3 + \frac{x^5}5 \right) < x
  \end{align}
  for all $x\in (0, 1]$.

  Let $\delta = 0.1$. We prove the result for $x\in (0,\delta]$ and $x\in [\delta, 1]$ separately.

  \textbf{Case $x\in (0,\delta]$.}
  Write $s = x + 2x^3 + \frac{x^5}5$.
  For $Z\sim \cN(0, 1)$, let $Z_+ = \exp(s+\sqrt s Z)$, $Z_- = \exp(-s-\sqrt s Z)$.
  Then
  \begin{align}
    g(s) = \bE_Z \frac{Z_+-Z_-}{Z_++Z_-} = \frac 12 \bE_Z \frac{Z_+-Z_-}{1+\frac 12 (Z_++Z_--2)}.
  \end{align}
  Expanding the last term using
  \begin{align}
    \frac 1{1+x}=1-x+x^2-x^3+\frac{x^4}{1+x},
  \end{align}
  we get
  \begin{align}
    g(s) = \frac{L_0}2 - \frac{L_1}4 + \frac{L_2}8 - \frac{L_3}{16} + \frac{E_4}{16},
  \end{align}
  where
  \begin{align}
    L_0 &= \bE_Z[Z_+-Z_-],\\
    L_1 &= \bE_Z[(Z_+-Z_-)(Z_++Z_--2)],\\
    L_2 &= \bE_Z[(Z_+-Z_-)(Z_++Z_--2)^2],\\
    L_3 &= \bE_Z[(Z_+-Z_-)(Z_++Z_--2)^3],\\
    E_4 &= \bE_Z\left[\frac{(Z_+-Z_-)(Z_++Z_--2)^4}{Z_++Z_-}\right]
    \le \bE_Z (Z_++Z_--2)^4.
  \end{align}
  Computing the terms using
  \begin{align}
    \bE_Z \exp(\mu + \sigma Z) = e^{\mu+\frac{\sigma^2}2},
  \end{align}
  we get
  \begin{align}
    L_0 &= e^{3s/2}-e^{-s/2}, \\
    L_1 &= e^{4s}-2e^{3s/2}+2e^{-s/2}-1, \\
    L_2 &= e^{15s/2}-4e^{4s}+4e^{3s/2}-5e^{-s/2}+4, \\
    L_3 &= e^{12s}-6e^{15s/2}+13e^{4s}-8e^{3s/2}+14e^{-s/2}-14, \\
    E_4 &\le e^{12s}-8e^{15s/2}+29e^{4s}-64e^{3s/2}-56e^{-s/2}+98.
  \end{align}
  Collecting terms, we get
  \begin{align} \label{eqn:large-step:g6-bound-step}
    g(s) \le \frac 14 \left(e^{4s}-8e^{3s/2}-24e^{-s/2}+31\right).
  \end{align}

  When $x\le 0.1$, we have $s\le 0.105$. So the largest exponent is $0.105 \cdot 4 \le 0.5$.
  By Taylor's theorem, for $y\le 0.5$, we have
  \begin{align}
    \left| e^y-\sum_{0\le i\le 3} \frac{y^i}{i!}\right| \le e^{0.5} \frac{y^4}{4!} \le \frac{y^4}{12}.
  \end{align}
  Applying this to Eq.~\eqref{eqn:large-step:g6-bound-step}, we get
  \begin{align}
    g(s) \le s-s^2+\frac 53 s^3+\frac{135}{16}s^4 \le s-\frac 23 s^2,
  \end{align}
  where the second step uses $s\le 0.105$.
  Note that $s-\frac 23 s^2$ is increasing for $s\in [0, 0.105]$.
  Because $x\le 0.1$, we have
  \begin{align}
    s=x+2x^3+\frac{x^5}5 \le x+0.2002 x^2 \le x+\frac{x^2}4.
  \end{align}
  Therefore
  \begin{align}
    g(s) \le \left(x+\frac{x^2}4\right)-\frac 23 \left(x+\frac{x^2}4\right)^2 = x-\frac{5x^2}{12}-\frac{x^3}3-\frac{x^4}{24} < x.
  \end{align}

  \textbf{Case $x\in [\delta, 1]$.}
  Because $\frac{ds}{dx}=1+6x^2+x^4$, $s$ is $8$-Lipschitz for $x\in [\delta, 1]$.
  For $x\ge \delta$, $\frac{d\sqrt s}{dx} = \frac {1+6x^2+x^4}{2\sqrt{x+2x^3+\frac{x^5}5}} \le 2.5$. So $\sqrt s$ is $2.5$-Lipschitz for $x\in [\delta, 1]$.

  Let $x,x'\in [\delta, 1]$, $s=x+2x^3+\frac{x^5}5$, $s=x'+2x^{\prime 3}+\frac{x^{\prime 5}}5$.
  Then
  \begin{align}
    |g(s)-g(s')| &\le |s-s'| + \bE_{Z\sim \cN(0,1)} |Z| \left|\sqrt s-\sqrt{s'}\right| \\
    \nonumber &= |s-s'| + \sqrt{\frac 2\pi} \left|\sqrt s-\sqrt{s'}\right| \\
    \nonumber &\le  10 |x-x'|.
  \end{align}
  Consider the set $S = \{\delta, \delta+0.001, \ldots, 1\}$.
  Suppose we know that $g\left(x+2x^{3}+\frac{x^{5}}5\right) \le x-0.006$ for all $x\in S$.
  Then for any $x\in [\delta, 1]$, there exists $x^*\in S$ such that $|x-x^*|\le 0.0005$, and
  \begin{align}
    g\left(x+2x^{3}+\frac{x^{5}}5\right) \le g\left(x^*+2x^{*3}+\frac{x^{*5}}5\right) + 0.005
    \le x^*-0.001 \le x-0.0005.
  \end{align}
  Therefore it remains to prove that $g\left(x+2x^{3}+\frac{x^{5}}5\right) \le x-0.006$ for all $x\in S$.
  We do this using a rigorous numerical integration.
  Suppose we would like to evaluate $g(s)$ where $s=x+2x^{3}+\frac{x^{5}}5$.
  Let $n$ be a positive integer to determined later.
  \begin{align}
    g(s) &= \bE_{Z\sim \cN(0,1)} \tanh(s+\sqrt s Z)\\
    \nonumber &= \bP[|Z|\ge 5] + \int_{-5}^5 \tanh(s+\sqrt s t) \frac{e^{-\frac{t^2}2}}{\sqrt{2\pi}} dt \\
    \nonumber &\le 10^{-6} + \int_{-5}^5 \tanh(s+\sqrt s t) \frac{e^{-\frac{t^2}2}}{\sqrt{2\pi}} dt \\
    \nonumber &\le 10^{-6} + \sum_{0\le i\le 10n-1} \frac 1n \tanh\left(s+\sqrt s\left(-5+\frac{i+1}n\right)\right) \frac{e^{-\frac 12 \max\{(-5+\frac in)^2,(-5+\frac{i+1}n)^2\}}}{\sqrt{2\pi}}.
  \end{align}
  The last expression can be evaluated to arbitrarily high precision using e.g., Mathematica.
  Computation shows that taking $n=1000$ suffices.
\end{proof}

\subsection{A priori estimates} \label{sec:boht-app:a-priori}
The goal of this section is to prove Lemma~\ref{lem:boht-robust:a-priori} and its corollary Lemma~\ref{lem:boht-robust:normal}.

\begin{proof}[Proof of Lemma~\ref{lem:boht-robust:a-priori}, Eq.~\eqref{eqn:lem-boht-robust-a-priori:i}]
  Given $x_k = C_{\chi^2}(M_{P,k})$, we would like to compute $x'_{k+1} = C_{\chi^2}(M'_{P,k+1})$ where $M'_{P,k+1} = M_{P,k}^{\times (r-1)} \circ B_{r,\lambda}$.
  Let $\theta_1,\ldots,\theta_{r-1}$ be i.i.d.~copies of $\theta_{M_{P,k}}$. Then
  \begin{align}
    x'_{k+1} &= C_{\chi^2}\left(M_{P,k}^{\times (r-1)} \circ B_{r,\lambda}\right) \\
    \nonumber &= \bE_{\theta_1,\ldots,\theta_{r-1}} C_{\chi^2}((B_{\theta_1}\times \cdots \times B_{\theta_{r-1}})\circ B_{r,\lambda}),
  \end{align}
  where $B_\theta$ denotes the channel $\BSC_{\frac{1-\theta}2}$.

  For fixed $\theta_1,\ldots,\theta_{r-1}\in [-1,1]$, write $R = (B_{\theta_1} \times \cdots \times B_{\theta_{r-1}}) \circ B_{r,\lambda} : \{\pm\} \to \{\pm\}^{r-1}$.
  For $y\in \{\pm\}^{r-1}$, we have
  \begin{align}
    R_+(y) &= \sum_{z\in \{\pm\}^{r-1}} (B_{r,\lambda})_+(z) \prod_{i\in [r-1]} B_{\theta_i}(y_i|z_i) \\
    \nonumber &= \sum_{z\in \{\pm\}^{r-1}} \left( \frac {1-\lambda}{2^{r-1}} + \lambda \mathbbm{1}\{z=+^{r-1}\} \right) \prod_{i\in [r-1]} \frac 12 (1+\theta_i y_i z_i) \\
    \nonumber &= \frac {1-\lambda}{2^{r-1}} + \frac{\lambda}{2^{r-1}} \prod_{i\in [r-1]} (1+\theta_i y_i).
  \end{align}
  For simplicity, for two sequences $\theta\in [-1,1]^{r-1}$, $y\in \{\pm\}^{r-1}$, define
  \begin{align}
    \alpha(\theta,y) = \prod_{i\in [r-1]} (1+\theta_i y_i).
  \end{align}
  Then we have
  \begin{align} \label{eqn:boht-app:a-priori:i-R-plus}
    R_+(y) = \frac {1-\lambda}{2^{r-1}} + \frac{\lambda}{2^{r-1}} \alpha(\theta, y).
  \end{align}
  Then $\theta_R(y)$, the $\theta$-component corresponding to $y\in \{\pm\}^{r-1}$, is described as
  \begin{align} \label{eqn:boht-app:a-priori:i-theta-R}
    \theta_R(y) = \frac{R_+(y)-R_-(y)}{R_+(y)+R_-(y)}
    = \frac{\lambda (\alpha(\theta,y) - \alpha(\theta,-y))}{2(1-\lambda) + \lambda (\alpha(\theta,y) + \alpha(\theta,-y))}.
  \end{align}
  Combining Eq.~\eqref{eqn:boht-app:a-priori:i-R-plus} and Eq.~\eqref{eqn:boht-app:a-priori:i-theta-R}, we get
  \begin{align} \label{eqn:boht-app:a-priori:i-chi2-cap-R}
    C_{\chi^2}(R) = \sum_{y\in \{\pm\}^{r-1}} R_+(y) \theta_R(y)
    = \frac{\lambda^2}{2^r} \sum_{y\in \{\pm\}^{r-1}} \frac{\alpha(\theta,y) (\alpha(\theta,y)-\alpha(\theta,-y))}{1+\frac \lambda 2 (\alpha(\theta,y) + \alpha(\theta,-y) - 2)}.
  \end{align}
  Expanding the last term using
  \begin{align} \label{eqn:boht-app:a-priori:i-expand}
    \frac 1{1+x} = 1-x+\frac{x^2}{1+x},
  \end{align}
  we get
  \begin{align}
    C_{\chi^2}(R) = \frac{\lambda^2}{2^r} \left( T_0(\theta) - \frac \lambda2 T_1(\theta) +\frac{\lambda^2}4 R_2(\theta) \right),
  \end{align}
  where
  \begin{align}
    \label{eqn:boht-app:a-priori:i-T0} T_0(\theta) &= \sum_{y\in \{\pm\}^{r-1}} \alpha(\theta,y)(\alpha(\theta,y)-\alpha(\theta,-y)), \\
    \label{eqn:boht-app:a-priori:i-T1} T_1(\theta) &= \sum_{y\in \{\pm\}^{r-1}} \alpha(\theta,y)(\alpha(\theta,y)-\alpha(\theta,-y))(\alpha(\theta,y)+\alpha(\theta,-y)-2),\\
    R_2(\theta) &= \sum_{y\in \{\pm\}^{r-1}} \frac{\alpha(\theta,y)(\alpha(\theta,y)-\alpha(\theta,-y))(\alpha(\theta,y)+\alpha(\theta,-y)-2)^2}{1+\frac \lambda 2 (\alpha(\theta,y)+\alpha(\theta,-y)-2)}.
  \end{align}
  Note that
  \begin{align}
    \frac \lambda 2 |R_2(\theta)| &\le \sum_{y\in \{\pm\}^{r-1}} \alpha(\theta,y)(\alpha(\theta,y)+\alpha(\theta,-y)-2)^2 \left|\frac{R_+(y)-R_-(y)}{R_+(y)+R_-(y)}\right| \\
    \nonumber &\le \sum_{y\in \{\pm\}^{r-1}} \alpha(\theta,y)(\alpha(\theta,y)+\alpha(\theta,-y)-2)^2.
  \end{align}

  By Lemma~\ref{lem:boht-app:a-priori:moment}, we have
  \begin{align}
    \label{eqn:boht-app:a-priori:i-T0-comp} \bE T_0(\theta) &= 2^{r-1}\left((1+x_k)^{r-1} -(1-x_k)^{r-1}\right), \\
    \label{eqn:boht-app:a-priori:i-T1-comp} \bE T_1(\theta) &= 2^{r-1}\left((1+3 x_k)^{r-1} -2 (1+x_k)^{r-1} + (1-x_k)^{r-1}\right),\\
    \frac \lambda 2 \bE |R_2(\theta)| &\le 2^{r-1}\left((1+3 x_k)^{r-1}-(1-x_k)^{r-1}-4(1+x_k)^{r-1}+4\right).
  \end{align}
  Expanding with respect to $x_k$, we get
  \begin{align}
    \bE T_0(\theta) &= 2^r (r-1) x_k + O_r(x_k^2),\\
    \bE T_1(\theta) &= O_r(x_k^2),\\
    \frac \lambda 2 \bE |R_2(\theta)| & = O_r(x_k^2).
  \end{align}

  Finally,
  \begin{align}
    x'_{k+1} &= \bE_{\theta_1,\ldots,\theta_{r-1}} C_{\chi^2}((B_{\theta_1}\times \cdots \times B_{\theta_{r-1}})\circ B_{r,\lambda}) \\
    \nonumber &= \bE_{\theta_1,\ldots,\theta_{r-1}} \frac{\lambda^2}{2^r} \left( T_0(\theta) - \frac \lambda2 T_1(\theta) +\frac{\lambda^2}4 R_2(\theta) \right) \\
    \nonumber &= (r-1) \lambda^2 x_k + O_r(\lambda^2 x_k^2).
  \end{align}
  This finishes the proof.
\end{proof}

\begin{proof}[Proof of Lemma~\ref{lem:boht-robust:a-priori}, Eq.~\eqref{eqn:lem-boht-robust-a-priori:ii}]
  Use the same notation as in the proof of Lemma~\ref{lem:boht-robust:a-priori}, Eq.~\eqref{eqn:lem-boht-robust-a-priori:i}.
  We need to compute
  \begin{align}
    \bE \theta_R^3 = \sum_{y\in \{\pm\}^{r-1}} R_+(y) \theta_R^3(y)
    = \frac{\lambda^4}{2^{r+2}} \sum_{y\in \{\pm\}^{r-1}} \frac{\alpha(\theta,y)(\alpha(\theta,y)-\alpha(\theta,-y))^3}{\left(1+\frac \lambda 2 (\alpha(\theta,y)+\alpha(\theta,-y)-2)\right)^3}.
  \end{align}
  Expanding using
  \begin{align} \label{eqn:boht-app:a-priori:ii-expand}
    \frac 1{(1+x)^3} = 1-3x + \frac{6x^2+8x^3+3x^4}{(1+x)^3},
  \end{align}
  we get
  \begin{align}
    \bE \theta_R^3 = \sum_{y\in \{\pm\}^{r-1}} R_+(y) \theta_R^3(y) = \frac{\lambda^4}{2^{r+2}} \left(S_0(\theta) - \frac{3\lambda}{2} S_1(\theta) + E_2(\theta)\right),
  \end{align}
  where
  \begin{align}
    S_0(\theta) &= \sum_{y\in \{\pm\}^{r-1}} \alpha(\theta,y)(\alpha(\theta,y)-\alpha(\theta,-y))^3,\\
    S_1(\theta) &= \sum_{y\in \{\pm\}^{r-1}} \alpha(\theta,y)(\alpha(\theta,y)-\alpha(\theta,-y))^3(\alpha(\theta,y)+\alpha(\theta,-y)-2),\\
    E_2(\theta) &= \sum_{y\in \{\pm\}^{r-1}} \alpha(\theta,y)(\alpha(\theta,y)-\alpha(\theta,-y))^3 \cdot \frac{6x^2+8x^3+3x^4}{(1+x)^3}
  \end{align}
  where $x = \frac \lambda2 (\alpha(\theta,y)+\alpha(\theta,-y)-2)$.

  Note that
  \begin{align}
    \frac{\lambda^3}{8} |E_2(\theta)| &\le \sum_{y\in \{\pm\}^{r-1}} \alpha(\theta,y) (6x^2+8x^3+3x^4) \left|\frac{R_+(y)-R_-(y)}{R_+(y)+R_-(y)}\right|^3 \\
    \nonumber &\le \sum_{y\in \{\pm\}^{r-1}} \alpha(\theta,y) (6x^2+8x^3+3x^4).
  \end{align}

  By Lemma~\ref{lem:boht-app:a-priori:moment}, we have
  \begin{align}
    \bE S_0(\theta) &= 2^{r-1} \left((1+6x_k+y_k)^{r-1}-4(1-y_k)^{r-1}+3(1-2x_k+y_k)^{r-1}\right),\\
    \bE S_1(\theta) &= 2^{r-1} \left((1+10x_k+5y_k)^{r-1}-3(1+2x_k-3y_k)^{r-1}-4(1-2x_k+y_k)^{r-1}\right. \\
    \nonumber &\left.-2(1+6x_k+y_k)^{r-1}+8(1-y_k)^{r-1}\right),\\
    \frac{\lambda^3}{8} \bE |E_2(\theta)| &\le \frac {3 \lambda^2}2 \cdot 2^{r-1} \left((1+3 x_k)^{r-1}-(1-x_k)^{r-1}-4(1+x_k)^{r-1}+4\right) \\
    \nonumber &+ \lambda^3 \cdot 2^{r-1} \left(
      (1+6x_k+y_k)^{r-1}+4(1-y_k)^{r-1}-6(1+3x_k)^{r-1}\right.\\
    \nonumber &\left.+3(1-2x_k+y_k)^{r-1}-6(1-x_k)^{r-1}+12(1+x_k)^{r-1}-8
    \right) \\
    \nonumber &+ \frac{3\lambda^4}{16} \cdot 2^{r-1} \left(
      (1+10x_k+5y_k)^{r-1}+5(1+2x_k-3y_k)^{r-1}-8(1+6x_k+y_k)^{r-1}\right.\\
    \nonumber & +10(1-2x_k+y_k)^{r-1}-32(1-y_k)^{r-1}+24(1+3x_k)^{r-1}-24(1-2x_k+y_k)^{r-1}\\
    \nonumber &\left.+40(1-x_k)^{r-1}-32(1+x_k)^{r-1}+16\right).
  \end{align}

  Expanding with respect to $x_k$ (and using that $y_k\le x_k$) we get
  \begin{align}
    \bE S_0(\theta) &= 2^{r+2} (r-1) y_k + O_r(x_k^2),\\
    \bE S_1(\theta) &= O_r(x_k^2),\\
    \frac{\lambda^3}{8} \bE |E_2(\theta)| &= O_r(\lambda^2 x_k^2).
  \end{align}

  Finally,
  \begin{align}
    y'_{k+1} &= \bE_{\theta_1,\ldots,\theta_{r-1}} \bE \theta_R^3 \\
    \nonumber & = \bE_{\theta_1,\ldots,\theta_{r-1}} \frac{\lambda^4}{2^{r+2}} \left(S_0(\theta) - \frac{3\lambda}{2} S_1(\theta) + E_2(\theta)\right) \\
    \nonumber &= (r-1) \lambda^4 y_k + O_r(\lambda^2 x_k^2).
  \end{align}
  This finishes the proof.
\end{proof}

\begin{proof}[Proof of Lemma~\ref{lem:boht-robust:a-priori}, Eq.~\eqref{eqn:lem-boht-robust-a-priori:iii}]
  Given $x'_{k+1}=C_{\chi^2}(M'_{P,k+1})$, we would like to compute $x_{k+1} = C_{\chi^2}(M_{P,k+1})$, where $M_{P,k+1}=\bE_{b\sim D} (M'_{P,k+1})^{\star b}$.
  Let $(\theta_i)_{i\in \bZ_{\ge 0}}$ be i.i.d.~copies of $\theta_{M'_{P,k+1}}$.
  Then
  \begin{align}
    x_{k+1} &= C_{\chi^2}\left(\bE_{b\sim D} (M'_{P,k+1})^{\star b}\right) \\
    &= \bE_{b\sim D} \bE_\theta C_{\chi^2}(B_{\theta_1}\star \cdots \star B_{\theta_b}).
  \end{align}

  For fixed $b$ and $(\theta_i)_{i\in \bZ_{\ge 0}}$, write $R=B_{\theta_1}\star \cdots \star B_{\theta_b}: \{\pm\} \to \{\pm\}^b$.
  For $y\in \{\pm\}^b$, we have
  \begin{align} \label{eqn:boht-app:a-priori:iii-R-plus}
    R_+(y) = \prod_{i\in [b]} (B_{\theta_i})_+(y_i) = \frac 1{2^b} \alpha(\theta, y).
  \end{align}
  Then $\theta_R(y)$, the $\theta$-component corresponding to $y\in \{\pm\}^b$, is described as
  \begin{align} \label{eqn:boht-app:a-priori:iii-theta-R}
    \theta_R(y) = \frac{R_+(y)-R_-(y)}{R_+(y)+R_-(y)} = \frac{\alpha(\theta,y)-\alpha(\theta,-y)}{\alpha(\theta,y)+\alpha(\theta,-y)}.
  \end{align}
  Combining Eq.~\eqref{eqn:boht-app:a-priori:iii-R-plus} and Eq.~\eqref{eqn:boht-app:a-priori:iii-theta-R}, we get
  \begin{align} \label{eqn:boht-app:a-priori:iii-chi2-cap-R}
    C_{\chi^2}(R) = \sum_{y\in \{\pm\}^b} R_+(y) \theta_R(y) = \frac 1{2^{b+1}} \sum_{y\in \{\pm\}^b} \frac{\alpha(\theta,y)-\alpha(\theta,-y)}{1+\frac 12(\alpha(\theta,y)+\alpha(\theta,-y)-2)}.
  \end{align}
  Expanding the last term using Eq.~\eqref{eqn:boht-app:a-priori:i-expand}, we get
  \begin{align}
    C_{\chi^2}(R) = \frac{1}{2^{b+1}} \left( T_0(\theta) - \frac 12 T_1(\theta) +\frac{1}4 R_2(\theta) \right),
  \end{align}
  where
  \begin{align}
    \label{eqn:boht-app:a-priori:iii-T0} T_0(\theta) &= \sum_{y\in \{\pm\}^b} \alpha(\theta,y)(\alpha(\theta,y)-\alpha(\theta,-y)), \\
    \label{eqn:boht-app:a-priori:iii-T1} T_1(\theta) &= \sum_{y\in \{\pm\}^b} \alpha(\theta,y)(\alpha(\theta,y)-\alpha(\theta,-y))(\alpha(\theta,y)+\alpha(\theta,-y)-2),\\
    R_2(\theta) &= \sum_{y\in \{\pm\}^b} \frac{\alpha(\theta,y)(\alpha(\theta,y)-\alpha(\theta,-y))(\alpha(\theta,y)+\alpha(\theta,-y)-2)^2}{1+\frac 12 (\alpha(\theta,y)+\alpha(\theta,-y)-2)}.
  \end{align}

  Using the same computation as the proof of Lemma~\ref{lem:boht-robust:a-priori}, Eq.~\eqref{eqn:lem-boht-robust-a-priori:i}, we get
  \begin{align}
    \bE_\theta T_0(\theta) &= 2^b\left((1+x'_{k+1})^b -(1-x'_{k+1})^b\right), \\
    \bE_\theta T_1(\theta) &= 2^b\left((1+3 x'_{k+1})^b -2 (1+x'_{k+1})^b + (1-x'_{k+1})^b\right),\\
    \frac 12 \bE_\theta R_2(\theta) &\le 2^b\left((1+3 x'_{k+1})^b-(1-x'_{k+1})^b-4(1+x'_{k+1})^b+4\right).
  \end{align}
  By Eq.~\eqref{eqn:lem-boht-robust-a-priori:i}, $x'_{k+1}=O_r(\lambda^2 x_k)$.
  Using Lemma~\ref{lem:boht-robust:low-deg-poly}, we have
  \begin{align}
    \bE_b \frac 1{2^{b+1}} \bE_\theta T_0(\theta) &= d x'_{k+1} + O_r(d^2 x_{k+1}^{\prime 2}),\\
    \bE_b \frac 1{2^{b+1}} \bE_\theta T_1(\theta) &= O_r(d^2 x_{k+1}^{\prime 2}),\\
    \bE_b \frac 1{2^{b+1}} \bE_\theta R_2(\theta) & = O_r(d^2 x_{k+1}^{\prime 2}).
  \end{align}

  Finally,
  \begin{align}
    x_{k+1} &= \bE_b \bE_{\theta_1,\ldots,\theta_b} C_{\chi^2}(B_{\theta_1}\star \cdots \star B_{\theta_b}) \\
    \nonumber &= \bE_b \bE_{\theta_1,\ldots,\theta_b} \frac{1}{2^{b+1}} \left( T_0(\theta) - \frac 12 T_1(\theta) +\frac{1}4 R_2(\theta) \right) \\
    \nonumber &= d x'_{k+1} + O_r(d^2 x_{k+1}^{\prime 2}).
  \end{align}
  This finishes the proof.
\end{proof}

\begin{proof}[Proof of Lemma~\ref{lem:boht-robust:a-priori}, Eq.~\eqref{eqn:lem-boht-robust-a-priori:iv}]
  Use the same notation as in the proof of Lemma~\ref{lem:boht-robust:a-priori}, Eq.~\eqref{eqn:lem-boht-robust-a-priori:iii}.
  We need to compute
  \begin{align}
    \bE \theta_R^3 = \sum_{y\in \{\pm\}^b} R_+(y) \theta_R^3(y)
    = \frac{1}{2^{b+3}} \sum_{y\in \{\pm\}^b} \frac{\alpha(\theta,y)(\alpha(\theta,y)-\alpha(\theta,-y))^3}{\left(1+\frac 1 2 (\alpha(\theta,y)+\alpha(\theta,-y)-2)\right)^3}.
  \end{align}
  Expanding using Eq.~\eqref{eqn:boht-app:a-priori:ii-expand}, we get
  \begin{align}
    \bE \theta_R^3 = \sum_{y\in \{\pm\}^b} R_+(y) \theta_R^3(y) = \frac{1}{2^{b+3}} \left(S_0(\theta) - \frac{3}{2} S_1(\theta) + E_2(\theta)\right),
  \end{align}
  where
  \begin{align}
    S_0(\theta) &= \sum_{y\in \{\pm\}^b} \alpha(\theta,y)(\alpha(\theta,y)-\alpha(\theta,-y))^3,\\
    S_1(\theta) &= \sum_{y\in \{\pm\}^b} \alpha(\theta,y)(\alpha(\theta,y)-\alpha(\theta,-y))^3(\alpha(\theta,y)+\alpha(\theta,-y)-2),\\
    E_2(\theta) &= \sum_{y\in \{\pm\}^b} \alpha(\theta,y)(\alpha(\theta,y)-\alpha(\theta,-y))^3 \cdot \frac{6x^2+8x^3+3x^4}{(1+x)^3}
  \end{align}
  where $x = \frac 12 (\alpha(\theta,y)+\alpha(\theta,-y)-2)$.

  Using the same computation as the proof of Lemma~\ref{lem:boht-robust:a-priori}, Eq.~\eqref{eqn:lem-boht-robust-a-priori:ii}, we get
  \begin{align}
    \bE_\theta S_0(\theta) &= 2^b \left((1+6x'_{k+1}+y'_{k+1})^b-4(1-y'_{k+1})^b+3(1-2x'_{k+1}+y'_{k+1})^b\right),\\
    \bE_\theta S_1(\theta) &= 2^b \left((1+10x'_{k+1}+5y'_{k+1})^b-3(1+2x'_{k+1}-3y'_{k+1})^b\right. \\
    \nonumber &\left.-4(1-2x'_{k+1}+y'_{k+1})^b-2(1+6x'_{k+1}+y'_{k+1})^b+8(1-y'_{k+1})^b\right),\\
    \frac{1}{8} \bE_\theta |E_2(\theta)| &\le \frac {3}2 \cdot 2^b \left((1+3 x'_{k+1})^b-(1-x'_{k+1})^b-4(1+x'_{k+1})^b+4\right) \\
    \nonumber &+ 2^b \left(
      (1+6x'_{k+1}+y'_{k+1})^b+4(1-y'_{k+1})^b-6(1+3x'_{k+1})^b\right.\\
    \nonumber &\left.+3(1-2x'_{k+1}+y'_{k+1})^b-6(1-x'_{k+1})^b+12(1+x'_{k+1})^b-8
    \right) \\
    \nonumber &+ \frac{3}{16} \cdot 2^b \left(
      (1+10x'_{k+1}+5y'_{k+1})^b+5(1+2x'_{k+1}-3y'_{k+1})^b\right.\\
    \nonumber &-8(1+6x'_{k+1}+y'_{k+1})^b  +10(1-2x'_{k+1}+y'_{k+1})^b-32(1-y'_{k+1})^b\\
    \nonumber &+24(1+3x'_{k+1})^b-24(1-2x'_{k+1}+y'_{k+1})^b+40(1-x'_{k+1})^b\\
    \nonumber &\left.-32(1+x'_{k+1})^b+16\right).
  \end{align}

  Using $y'_{k+1}\le x'_{k+1}=O_r(\lambda^2 x_k)$ and Lemma~\ref{lem:boht-robust:low-deg-poly}, we have
  \begin{align}
    \bE_b \frac 1{2^{b+3}} \bE_\theta S_0(\theta) &= d y'_{k+1} + O_r(d^2 x_{k+1}^{\prime 2}),\\
    \bE_b \frac 1{2^{b+3}} \bE_\theta S_1(\theta) &= O_r(d^2 x_{k+1}^{\prime 2}),\\
    \bE_b \frac 1{2^{b+3}} \bE_\theta |E_2(\theta)| &= O_r(d^2 x_{k+1}^{\prime 2}).
  \end{align}

  Finally,
  \begin{align}
    y_{k+1} &= \bE_b \bE_{\theta_1,\ldots,\theta_b} \bE \theta_R^3 \\
    \nonumber & = \bE_b \bE_{\theta_1,\ldots,\theta_b} \frac{1}{2^{b+3}} \left(S_0(\theta) - \frac{3}{2} S_1(\theta) + E_2(\theta)\right) \\
    \nonumber &= d y'_{k+1} + O_r(d^2 x_{k+1}^{\prime 2}).
  \end{align}
  This finishes the proof.
\end{proof}

\begin{lemma}[Moment calculations] \label{lem:boht-app:a-priori:moment}
  Let $P$ be a BMS channel and $\ell\in \bZ_{\ge 0}$.
  Let $x = \bE \theta_P$ and $z = \bE \theta_P^3$.
  Let $\theta_1,\ldots,\theta_\ell$ be i.i.d.~copies of $\theta_P$, the $\theta$-component of $P$.
  For $y\in \{\pm\}^\ell$, define $\alpha(\theta,y) = \prod_{i\in [\ell]} (1+\theta_i y_i)$.
  Then we have
  \begin{align}
    \bE \sum_{y\in \{\pm\}^\ell} \alpha(\theta,y) &= 2^\ell,\\
    \bE \sum_{y\in \{\pm\}^\ell} \alpha^2(\theta,y) &= 2^\ell (1+x)^\ell,\\
    \bE \sum_{y\in \{\pm\}^\ell} \alpha(\theta,y)\alpha(\theta,-y) &= 2^\ell (1-x)^\ell, \\
    \bE \sum_{y\in \{\pm\}^\ell} \alpha^3(\theta,y) &= 2^\ell (1+3x)^\ell,\\
    \bE \sum_{y\in \{\pm\}^\ell} \alpha^2(\theta,y)\alpha(\theta,-y) &= 2^\ell (1-x)^\ell, \\
    \bE \sum_{y\in \{\pm\}^\ell} \alpha^4(\theta,y) &= 2^\ell (1+6x+z)^\ell,\\
    \bE \sum_{y\in \{\pm\}^\ell} \alpha^3(\theta,y)\alpha(\theta,-y) &= 2^\ell (1-z)^\ell,\\
    \bE \sum_{y\in \{\pm\}^\ell} \alpha^2(\theta,y)\alpha^2(\theta,-y) &= 2^\ell (1-2x+z)^\ell,\\
    \bE \sum_{y\in \{\pm\}^\ell} \alpha^5(\theta,y) &= 2^\ell (1+10x+5z)^\ell,\\
    \bE \sum_{y\in \{\pm\}^\ell} \alpha^4(\theta,y)\alpha(\theta,-y) &= 2^\ell (1+2x-3z)^\ell,\\
    \bE \sum_{y\in \{\pm\}^\ell} \alpha^3(\theta,y)\alpha^2(\theta,-y) &= 2^\ell (1-2x+z)^\ell.
  \end{align}
\end{lemma}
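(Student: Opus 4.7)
The plan is to exploit the independence of the $\theta_i$ and the product structure of $\alpha(\theta,y) = \prod_{i \in [\ell]}(1 + \theta_i y_i)$ to factor each of the eleven sums as an $\ell$-fold power of a single-coordinate expectation, and then to reduce the single-coordinate factor to a polynomial in the first few moments of $\theta_P$.

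First, for nonnegative integers $a,b$, since $\alpha(\theta,-y) = \prod_i(1 - \theta_i y_i)$, the sum over $y \in \{\pm\}^\ell$ factors across coordinates:
\[
  \sum_{y \in \{\pm\}^\ell} \alpha^a(\theta, y)\, \alpha^b(\theta, -y) = \prod_{i \in [\ell]} \Bigl[(1+\theta_i)^a (1-\theta_i)^b + (1-\theta_i)^a (1+\theta_i)^b\Bigr].
\]
Taking expectation and using that the $\theta_i$ are i.i.d.\ copies of $\theta_P$ gives
\[
  \bE \sum_{y \in \{\pm\}^\ell} \alpha^a(\theta, y)\, \alpha^b(\theta, -y) = \Bigl(\bE\bigl[(1+\theta_P)^a (1-\theta_P)^b + (1-\theta_P)^a (1+\theta_P)^b\bigr]\Bigr)^\ell.
\]
Each of the eleven claimed identities corresponds to one of the pairs $(a,b)$ with $a+b \le 5$.

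Second, I would simplify the single-coordinate expression. The symmetric combination $(1+\theta)^a(1-\theta)^b + (1-\theta)^a(1+\theta)^b$ is always a polynomial in $\theta^2$, so only even powers of $\theta_P$ survive. For instance, $(a,b) = (4,0)$ expands via the binomial theorem to $2 + 12\theta^2 + 2\theta^4$, while $(a,b) = (3,1)$ factors as $(1-\theta^2)\bigl[(1+\theta)^2 + (1-\theta)^2\bigr] = 2 - 2\theta^4$; the remaining nine pairs are similarly short expansions, each yielding a polynomial in $\theta_P^2$ of degree at most $4$.

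Third, I would apply the BMS moment collapses $\bE\theta_P = \bE\theta_P^2 = x$ and $\bE\theta_P^3 = \bE\theta_P^4 = z$. These follow by conditioning on $\Delta_P$: a direct computation shows $\bE[\theta_P^n \mid \Delta_P] = (1-2\Delta_P)^n$ for even $n$ and $(1-2\Delta_P)^{n+1}$ for odd $n$, so consecutive moments agree in pairs. Since $a+b \le 5$, only $1$, $\theta_P^2$, and $\theta_P^4$ can appear in the expanded polynomial, so substitution yields each formula on the nose after raising to the $\ell$-th power. There is no serious obstacle here beyond careful bookkeeping; the main work is just performing the eleven binomial expansions and confirming that odd moments never appear.
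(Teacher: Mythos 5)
Your proof is correct and takes essentially the same approach as the paper: both factor the sum over $y\in\{\pm\}^\ell$ coordinate-by-coordinate, reduce each single-coordinate factor to a polynomial in $\theta_i^2$ (since odd powers of $y_i$, equivalently odd powers of $\theta_i$, vanish), and then apply independence together with the BMS identities $\bE\theta_P=\bE\theta_P^2=x$ and $\bE\theta_P^3=\bE\theta_P^4=z$ (stated in the paper's preliminaries and in the definition of $y_k$). The only cosmetic difference is that you package all eleven cases into the single formula $\prod_{i}\bigl[(1+\theta_i)^a(1-\theta_i)^b+(1-\theta_i)^a(1+\theta_i)^b\bigr]$ before expanding, whereas the paper expands $(1+\theta_i y_i)^a(1-\theta_i y_i)^b$ first and then sums over $y_i$; these are the same computation in a different order.
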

\begin{proof}
  The proof is by direct computations.
  \paragraph{First moment}
  \begin{align}
    \sum_{y\in \{\pm\}^\ell} \alpha(\theta,y) &= \sum_{y\in \{\pm\}^\ell} \prod_{i\in [\ell]} (1+\theta_i y_i) = 2^\ell.
  \end{align}
  \paragraph{Second moments}
  \begin{align}
    \sum_{y\in \{\pm\}^\ell} \alpha^2(\theta,y) &= \sum_{y\in \{\pm\}^\ell} \prod_{i\in [\ell]} (1 + 2 \theta_i y_i + \theta_i^2) = 2^\ell \prod_{i\in [\ell]} (1+\theta_i^2), \\
    \sum_{y\in \{\pm\}^\ell} \alpha(\theta,y)\alpha(\theta,-y) &= \sum_{y\in \{\pm\}^\ell} \prod_{i\in [\ell]} (1-\theta_i^2) = 2^\ell \prod_{i\in [\ell]} (1-\theta_i^2).
  \end{align}
  \paragraph{Third moments}
  \begin{align}
    \sum_{y\in \{\pm\}^\ell} \alpha^3(\theta,y) &= \sum_{y\in \{\pm\}^\ell} \prod_{i\in [\ell]} (1 + 3 \theta_i y_i + 3 \theta_i^2 + \theta_i^3 y_i) = 2^\ell \prod_{i\in [\ell]} (1+3 \theta_i^2), \\
    \sum_{y\in \{\pm\}^\ell} \alpha^2(\theta,y)\alpha(\theta,-y) &= \sum_{y\in \{\pm\}^\ell} \prod_{i\in [\ell]} (1 + \theta_i y_i - \theta_i^2 - \theta_i^3 y_i) = 2^\ell \prod_{i\in [\ell]} (1-\theta_i^2).
  \end{align}
  \paragraph{Fourth moments}
  \begin{align}
    \sum_{y\in \{\pm\}^\ell} \alpha^4(\theta,y) &= \sum_{y\in \{\pm\}^\ell} \prod_{i\in [\ell]} (1 + 4 \theta_i y_i + 6 \theta_i^2 + 4 \theta_i^3 y_i + \theta_i^4) \\
    \nonumber &= 2^\ell \prod_{i\in [\ell]} (1 + 6 \theta_i^2 + \theta_i^4),\\
    \sum_{y\in \{\pm\}^\ell} \alpha^3(\theta,y)\alpha(\theta,-y) &= \sum_{y\in \{\pm\}^\ell} \prod_{i\in [\ell]} (1 + 2 \theta_i y_i -2 \theta_i^3 y_i - \theta_i^4) = 2^\ell \prod_{i\in [\ell]} (1 - \theta_i^4), \\
    \sum_{y\in \{\pm\}^\ell} \alpha^2(\theta,y)\alpha^2(\theta,-y) &= \sum_{y\in \{\pm\}^\ell} \prod_{i\in [\ell]} (1 - 2 \theta_i^2 + \theta_i^4) = 2^\ell \prod_{i\in [\ell]} (1 - 2 \theta_i^2 + \theta_i^4).
  \end{align}
  \paragraph{Fifth moments}
  \begin{align}
    \sum_{y\in \{\pm\}^\ell} \alpha^5(\theta,y) &= \sum_{y\in \{\pm\}^\ell} \prod_{i\in [\ell]} (1 + 5 \theta_i y_i + 10 \theta_i^2 + 10 \theta_i^3 y_i + 5 \theta_i^4 + \theta_i^5) \\
    \nonumber &= 2^\ell \prod_{i\in [\ell]} (1 + 10 \theta_i^2 + 5 \theta_i^4),\\
    \sum_{y\in \{\pm\}^\ell} \alpha^4(\theta,y)\alpha(\theta,-y) &= \sum_{y\in \{\pm\}^\ell} \prod_{i\in [\ell]} (1 + 3 \theta_i y_i + 2 \theta_i^2 -2 \theta_i^3 y_i - 3 \theta_i^4 - \theta_i^5) \\
    \nonumber &= 2^\ell \prod_{i\in [\ell]} (1 + 2 \theta_i^2 - 3 \theta_i^4), \\
    \sum_{y\in \{\pm\}^\ell} \alpha^3(\theta,y)\alpha^2(\theta,-y) &= \sum_{y\in \{\pm\}^\ell} \prod_{i\in [\ell]} (1 + \theta_i y_i - 2 \theta_i^2 - 2 \theta_i^3 y_i + \theta_i^4 + \theta_i^5) \\
    \nonumber &= 2^\ell \prod_{i\in [\ell]} (1 - 2 \theta_i^2 + \theta_i^4).
  \end{align}
\end{proof}

\begin{lemma} \label{lem:boht-robust:low-deg-poly}
  There exists $\epsilon>0$ and $C>0$ such that for all $d\ge 0$ and $\lambda\in [-1,1]$ satisfying $(r-1) d \lambda^2 \le 1$, for all $x \le \lambda^2 \epsilon$ and $m\in \bZ_{\ge 0}$, we have
  \begin{align}
    \left| \bE_{b\sim D} (1+x)^b - \sum_{0\le i\le m} \bE_{b\sim D} \binom b i x^i \right| \le C (d x)^{m+1}.
  \end{align}
  Here $D$ denotes either the point distribution at $d$, or $\Pois(d)$.
\end{lemma}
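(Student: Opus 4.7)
The plan is to expand $(1+x)^b$ via the binomial theorem (finite for $b\in\mathbb Z_{\ge 0}$), interchange expectation with the sum, and control the remaining tail $\sum_{i>m}\mathbb E_{b\sim D}\binom{b}{i}x^i$ by choosing $\epsilon$ small enough that $d|x|$ is bounded strictly below $1$.

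The key computation is the factorial-moment identity $\mathbb E_{b\sim\mathrm{Pois}(d)}\binom{b}{i} = d^i/i!$ (immediate from the probability generating function $\mathbb E z^b = e^{d(z-1)}$ of a Poisson), together with the trivial bound $\binom{d}{i}\le d^i/i!$ in the point-mass case. Hence in both cases
\[
\left|\mathbb E_{b\sim D}(1+x)^b - \sum_{0\le i\le m}\mathbb E_{b\sim D}\binom{b}{i}x^i\right|
= \left|\sum_{i>m}\mathbb E_{b\sim D}\binom{b}{i}x^i\right|
\le \sum_{i>m}\frac{(d|x|)^i}{i!}.
\]
Choosing $\epsilon = 1/2$ (say), the hypotheses $(r-1)d\lambda^2\le 1$ and (the natural reading) $|x|\le \lambda^2\epsilon$ give $d|x|\le \epsilon/(r-1)\le 1/2$. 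Setting $y:=d|x|\le 1/2$ and using $i!\ge (m+1)!$ for $i\ge m+1$ together with the geometric series,
\[
\sum_{i>m}\frac{y^i}{i!} \le \frac{y^{m+1}}{(m+1)!}\sum_{j\ge 0}y^j \le 2\,y^{m+1},
\]
which yields the claim with $C=2$.

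There is no substantive obstacle here; the content is really just the cleanness of the factorial moments of $\mathrm{Pois}(d)$ and a one-line tail estimate. The only point requiring minor care is that in the applications of Lemma~\ref{lem:boht-robust:low-deg-poly} inside the proof of Lemma~\ref{lem:boht-robust:a-priori}, the lemma is invoked with arguments of either sign (e.g.\ with $-x'_{k+1}$ in $(1-x'_{k+1})^b$), so the bound is really needed under $|x|\le \lambda^2\epsilon$; the argument above handles this automatically since it goes through $|x|$ throughout.
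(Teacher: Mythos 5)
Your proof is correct and self-contained, and it proves a bit more than what the paper actually writes down. The paper discharges this lemma by citing \cite[Lemma~3.7]{mossel2023exact}, which is a general-purpose Taylor-remainder estimate valid for a broad class of offspring distributions (see their Assumptions~1.10 and~1.11); the present paper then remarks that this citation is the only place the point-mass/Poisson assumption enters. Your argument instead specializes to the two offspring laws actually used and exploits the clean identity $\bE_{b\sim\Pois(d)}\binom{b}{i}=d^i/i!$ (equivalently, factorial moments $\bE[b(b-1)\cdots(b-i+1)]=d^i$) together with $\binom{d}{i}\le d^i/i!$ in the point-mass case, which collapses the remainder to a one-line tail bound $\sum_{i>m}\frac{(d|x|)^i}{i!}\le \frac{(d|x|)^{m+1}}{(m+1)!(1-d|x|)}\le 2(d|x|)^{m+1}$ once $\epsilon\le 1/2$ (using $(r-1)d\lambda^2\le 1$ with $r\ge 2$ and $|x|\le\lambda^2\epsilon$ to get $d|x|\le 1/2$). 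This trades generality for a fully elementary, short argument. You are also right to flag that the hypothesis should read $|x|\le\lambda^2\epsilon$ (and the right side $C(d|x|)^{m+1}$): the lemma is indeed invoked on both $\pm x'_{k+1}$, $\pm y'_{k+1}$, etc., and your proof handles that uniformly.
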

\begin{proof}
  Follows from the proof of \cite[Lemma 3.7]{mossel2023exact}.
\end{proof}

Lemma~\ref{lem:boht-robust:low-deg-poly} is the only place in the proof of Theorem~\ref{thm:boht-robust} where we use that $D$ is a point distribution or a Poisson distribution.
We remark that \cite[Lemma 3.7]{mossel2023exact} works for a large class of offspring distributions $D$ (see \cite[Assumption 1.10 and 1.11]{mossel2023exact}). Therefore Theorem~\ref{thm:boht-robust} and Theorem~\ref{thm:boht-r56} holds for more general offspring distributions by replacing Lemma~\ref{lem:boht-robust:low-deg-poly} with a more general version.

Lemma~\ref{lem:boht-robust:a-priori}, Eq.~\eqref{eqn:lem-boht-robust-a-priori:final-i} and Eq.~\eqref{eqn:lem-boht-robust-a-priori:final-ii} directly follow from Eq.~\eqref{eqn:lem-boht-robust-a-priori:i}, Eq.~\eqref{eqn:lem-boht-robust-a-priori:ii}, Eq.~\eqref{eqn:lem-boht-robust-a-priori:iii}, Eq.~\eqref{eqn:lem-boht-robust-a-priori:iv}.
Thus we have finished the proof of Lemma~\ref{lem:boht-robust:a-priori}.

\begin{proof}[Proof of Lemma~\ref{lem:boht-robust:normal}]
  Because the statement is monotone in $\delta$, WLOG assume that $C_{\chi^2}(P)=\delta$.
  By Lemma~\ref{lem:boht-robust:a-priori}, there exists $C=C(r)>0$ such that
  \begin{align}
    \left|x_{k+1} - (r-1) d \lambda^2 x_k\right| &\le C (r-1) d \lambda^2 x_k^2,\\
    \left|y_{k+1} - (r-1) d \lambda^4 y_k\right| &\le C x_k^2.
  \end{align}
  Take $k_0 = \Theta_r(1/\delta)$ such that $\frac 12 \le (1-2 C \delta)^{k_0}$ and $(1+2 C \delta)^{k_0} \le 2$. Then for all $0\le k\le k_0$, we have
  \begin{align}
    \frac 12 ((r-1) d \lambda^2)^{k} \delta \le x_{k} \le 2 ((r-1) d \lambda^2)^{k} \delta.
  \end{align}
  On the other hand,
  \begin{align}
    y_{k_0} &\le \sum_{0\le k\le k_0-1} C ((r-1) d \lambda^4)^{k_0-k-1} x_k^2 + ((r-1) d \lambda^4)^{k_0} y_0 \\
    \nonumber &\le 4 C \sum_{0\le k\le k_0-1} \lambda^{2(k_0-k-1)} ((r-1) d \lambda^2)^{k_0+k-1} \delta^2 + \lambda^{2 k_0} ((r-1) d \lambda^2)^{k_0} \delta.
  \end{align}
  Because $\lambda\in \left[-\frac 1{2^{r-1}-1}, \frac 15\right]$ and $(r-1) d \lambda^2 \ge 0.99$, we have $|\lambda| \le (r-1) d \lambda^2$.
  So
  \begin{align}
    \lambda^{2(k_0-k-1)} ((r-1) d \lambda^2)^{k_0+k-1} &\le |\lambda|^{k_0-k-1} ((r-1) d \lambda^2)^{2k_0-2} \\
    \nonumber &\le 1.1 |\lambda|^{k_0-k-1} ((r-1) d \lambda^2)^{2k_0}.
  \end{align}
  Furthermore, because $k_0=\Theta(1/\delta)=\omega(\log(1/\delta))$, we have $|\lambda|^{k_0} \le \delta$ (for small enough $\delta$).
  So
  \begin{align}
    \lambda^{2 k_0} ((r-1) d \lambda^2)^{k_0} \delta
    \le ((r-1) d \lambda^2)^{2 k_0} \delta^2.
  \end{align}
  Combining the above, we get
  \begin{align}
    y_{k_0} &\le \left( 4.4 C \sum_{0\le k\le k_0-1} |\lambda|^{k_0-k-1} + 1\right) \left(((r-1) d \lambda^2)^{k_0} \delta\right)^2 \\
    \nonumber & \le (9C+1) \left(((r-1) d \lambda^2)^{k_0} \delta\right)^2.
  \end{align}
  This finishes the proof.
\end{proof}

\subsection{Fine estimates} \label{sec:boht-app:fine}
The goal of this section is to prove Lemma~\ref{lem:boht-robust:fine}.

\begin{proof}[Proof of Lemma~\ref{lem:boht-robust:fine}, Eq.~\eqref{eqn:lem-boht-robust-fine:i}]
  Use the same notation as in the proof of Lemma~\ref{lem:boht-robust:a-priori}, Eq.~\eqref{eqn:lem-boht-robust-a-priori:i}.
  Expanding Eq.~\eqref{eqn:boht-app:a-priori:i-chi2-cap-R} using
  \begin{align} \label{eqn:boht-app:fine:i-expand}
    \frac 1{1+x}=1-x+x^2-x^3+\frac{x^4}{1+x},
  \end{align}
  we get
  \begin{align}
    C_{\chi^2}(R) = \frac{\lambda^2}{2^r}\left(T_0(\theta) - \frac{\lambda}{2} T_1(\theta) + \frac{\lambda^2}{4} T_2(\theta) - \frac{\lambda^3} 8 T_3(\theta) + \frac{\lambda^4}{16} R_4(\theta) \right),
  \end{align}
  where $T_0(\theta)$ is defined in Eq.~\eqref{eqn:boht-app:a-priori:i-T0}, $T_1(\theta)$ is defined in Eq.~\eqref{eqn:boht-app:a-priori:i-T1}, and the remaining are defined as
  \begin{align}
    T_2(\theta) &= \sum_{y\in \{\pm\}^{r-1}} \alpha(\theta,y)(\alpha(\theta,y)-\alpha(\theta,-y))(\alpha(\theta,y)+\alpha(\theta,-y)-2)^2,\\
    T_3(\theta) &= \sum_{y\in \{\pm\}^{r-1}} \alpha(\theta,y)(\alpha(\theta,y)-\alpha(\theta,-y))(\alpha(\theta,y)+\alpha(\theta,-y)-2)^3,\\
    R_4(\theta) &= \sum_{y\in \{\pm\}^{r-1}} \frac{\alpha(\theta,y)(\alpha(\theta,y)-\alpha(\theta,-y))(\alpha(\theta,y)+\alpha(\theta,-y)-2)^4}{1+\frac \lambda 2 (\alpha(\theta,y)+\alpha(\theta,-y)-2)}.
  \end{align}
  Note that
  \begin{align}
    \frac{\lambda}{2} |R_4(\theta)| &\le \sum_{y\in \{\pm\}^{r-1}} \alpha(\theta,y)(\alpha(\theta,y)+\alpha(\theta,-y)-2)^4 \left|\frac{R_+(y)-R_-(y)}{R_+(y)+R_-(y)}\right| \\
    \nonumber &\le \sum_{y\in \{\pm\}^{r-1}} \alpha(\theta,y)(\alpha(\theta,y)+\alpha(\theta,-y)-2)^4.
  \end{align}
  By Lemma~\ref{lem:boht-app:a-priori:moment}, we have
  \begin{align}
    \bE T_2(\theta) &= 2^{r-1} \left( (1+6x_k+y_k)^{r-1} - 4(1+3x_k)^{r-1}-(1-2x_k+y_k)^{r-1}+4(1+x_k)^{r-1}\right),\\
    \bE T_3(\theta) &= 2^{r-1} \left(
      (1+10x_k+5y_k)^{r-1} + (1+2x_k-3y_k)^{r-1} -6(1+6x_k+y_k)^{r-1} \right.\\
    \nonumber & \left.+12(1+3x_k)^{r-1} +4(1-2x_k+y_k)^{r-1}-8(1+x_k)^{r-1}-4(1-x_k)^{r-1}
    \right),\\
    \frac{\lambda}2 \bE |R_4(\theta)| &\le 2^{r-1} \left(
      (1+10x_k+5y_k)^{r-1}+5(1+2x_k-3y_k)^{r-1}-8(1+6x_k+y_k)^{r-1}\right.\\
    \nonumber & +10(1-2x_k+y_k)^{r-1}-32(1-y_k)^{r-1}+24(1+3x_k)^{r-1}-24(1-2x_k+y_k)^{r-1}\\
    \nonumber &\left.+40(1-x_k)^{r-1}-32(1+x_k)^{r-1}+16\right).
  \end{align}
  Expanding with respect to $x_k$ and using $y_k\le \gamma x_k^2$, we get
  \begin{align}
    \bE T_0(\theta) &= 2^r (r-1) x_k + O_r(x_k^3),\\
    \bE T_1(\theta) &= 2^{r+1} (r-1)(r-2) x_k^2 + O_r(x_k^3),\\
    \bE T_2(\theta) &= O_r(x_k^3),\\
    \bE T_3(\theta) &= O_r(x_k^3),\\
    \frac \lambda 2\bE |R_4(\theta)| &= O_r(x_k^3).
  \end{align}

  Finally,
  \begin{align}
    x'_{k+1} &= \bE_{\theta_1,\ldots,\theta_{r-1}} C_{\chi^2}((B_{\theta_1}\times \cdots \times B_{\theta_{r-1}})\circ B_{r,\lambda}) \\
    \nonumber &= \bE_{\theta_1,\ldots,\theta_{r-1}} \frac{\lambda^2}{2^r}\left(T_0(\theta) - \frac{\lambda}{2} T_1(\theta) + \frac{\lambda^2}{4} T_2(\theta) - \frac{\lambda^3} 8 T_3(\theta) + \frac{\lambda^4}{16} R_4(\theta) \right), \\
    \nonumber &= (r-1) \lambda^2 x_k - (r-1)(r-2) \lambda^3 x_k^2 + O_r(\lambda^2 x_k^3).
  \end{align}
  This finishes the proof.
\end{proof}

\begin{proof}[Proof of Lemma~\ref{lem:boht-robust:fine}, Eq.~\eqref{eqn:lem-boht-robust-fine:ii}]
  Use the same notation as in the proof of Lemma~\ref{lem:boht-robust:a-priori}, Eq.~\eqref{eqn:lem-boht-robust-a-priori:iii}.
  Expanding Eq.~\eqref{eqn:boht-app:a-priori:iii-chi2-cap-R} using Eq.~\eqref{eqn:boht-app:fine:i-expand},
  we get
  \begin{align}
    C_{\chi^2}(R) = \frac{1}{2^{b+1}}\left(T_0(\theta) - \frac{1}{2} T_1(\theta) + \frac{1}{4} T_2(\theta) - \frac{1} 8 T_3(\theta) + \frac{1}{16} R_4(\theta) \right),
  \end{align}
  where $T_0(\theta)$ is defined in Eq.~\eqref{eqn:boht-app:a-priori:iii-T0}, $T_1(\theta)$ is defined in Eq.~\eqref{eqn:boht-app:a-priori:iii-T1}, and the remaining are defined as
  \begin{align}
    T_2(\theta) &= \sum_{y\in \{\pm\}^b} \alpha(\theta,y)(\alpha(\theta,y)-\alpha(\theta,-y))(\alpha(\theta,y)+\alpha(\theta,-y)-2)^2,\\
    T_3(\theta) &= \sum_{y\in \{\pm\}^b} \alpha(\theta,y)(\alpha(\theta,y)-\alpha(\theta,-y))(\alpha(\theta,y)+\alpha(\theta,-y)-2)^3,\\
    R_4(\theta) &= \sum_{y\in \{\pm\}^b} \frac{\alpha(\theta,y)(\alpha(\theta,y)-\alpha(\theta,-y))(\alpha(\theta,y)+\alpha(\theta,-y)-2)^4}{1+\frac 1 2 (\alpha(\theta,y)+\alpha(\theta,-y)-2)}.
  \end{align}
  Using the same computation as the proof of Lemma~\ref{lem:boht-robust:fine}, Eq.~\eqref{eqn:lem-boht-robust-fine:i}, we get
  \begin{align}
    \bE_\theta T_2(\theta) &= 2^b \left( (1+6x'_{k+1}+y'_{k+1})^b - 4(1+3x'_{k+1})^b-(1-2x'_{k+1}+y'_{k+1})^b\right.\\
    &\left.+4(1+x'_{k+1})^b\right),\\
    \bE_\theta T_3(\theta) &= 2^b \left(
      (1+10x'_{k+1}+5y'_{k+1})^b + (1+2x'_{k+1}-3y'_{k+1})^b -6(1+6x'_{k+1}+y'_{k+1})^b \right.\\
    \nonumber & \left.+12(1+3x'_{k+1})^b +4(1-2x'_{k+1}+y'_{k+1})^b-8(1+x'_{k+1})^b-4(1-x'_{k+1})^b
    \right),\\
    \frac{1}2 \bE_\theta |R_4(\theta)| &\le 2^b \left(
      (1+10x'_{k+1}+5y'_{k+1})^b+5(1+2x'_{k+1}-3y'_{k+1})^b-8(1+6x'_{k+1}+y'_{k+1})^b\right.\\
    \nonumber & +10(1-2x'_{k+1}+y'_{k+1})^b-32(1-y'_{k+1})^b+24(1+3x'_{k+1})^b\\
    \nonumber &\left.-24(1-2x'_{k+1}+y'_{k+1})^b+40(1-x'_{k+1})^b-32(1+x'_{k+1})^b+16\right).
  \end{align}
  Because $y_k \le \gamma x_k^2$, using Eq.~\eqref{eqn:lem-boht-robust-a-priori:i} and Eq.~\eqref{eqn:lem-boht-robust-a-priori:ii} we see that $y'_{k+1} \le \gamma x_{k+1}^{\prime 2}$.
  Using Lemma~\ref{lem:boht-robust:low-deg-poly}, we get
  \begin{align}
    \bE_b \frac 1{2^{b+1}} \bE_\theta T_0(\theta) &= d x'_{k+1} + O_r(d^3 x_{k+1}^{\prime 3}),\\
    \bE_b \frac 1{2^{b+1}} \bE_\theta T_1(\theta) &= 2\left(\bE_{b\sim D} b(b-1)\right) x_{k+1}^{\prime 2} + O_r(d^3 x_{k+1}^{\prime 3}),\\
    \bE_b \frac 1{2^{b+1}} \bE_\theta T_2(\theta) &= O_r(d^3 x_{k+1}^{\prime 3}),\\
    \bE_b \frac 1{2^{b+1}} \bE_\theta T_3(\theta) &= O_r(d^3 x_{k+1}^{\prime 3}),\\
    \bE_b \frac 1{2^{b+1}} \bE_\theta |R_4(\theta)| &= O_r(d^3 x_{k+1}^{\prime 3}).
  \end{align}

  Finally,
  \begin{align}
    x_{k+1} &= \bE_{\theta_1,\ldots,\theta_b} C_{\chi^2}(B_{\theta_1}\star \cdots \star B_{\theta_b}) \\
    \nonumber &= \bE_{\theta_1,\ldots,\theta_b} \frac{1}{2^{b+1}}\left(T_0(\theta) - \frac{1}{2} T_1(\theta) + \frac{1}{4} T_2(\theta) - \frac{1} 8 T_3(\theta) + \frac{1}{16} R_4(\theta) \right), \\
    \nonumber &= d x'_{k+1} - \left(\bE_{b\sim D} b(b-1)\right) x_{k+1}^{\prime 2} + O_r(d^3 x_{k+1}^{\prime 3}).
  \end{align}
  This finishes the proof.
\end{proof}

\begin{proof}[Proof of Lemma~\ref{lem:boht-robust:fine}]
  Eq.~\eqref{eqn:lem-boht-robust-fine:final} follows directly from Eq.~\eqref{eqn:lem-boht-robust-fine:i} and Eq.~\eqref{eqn:lem-boht-robust-fine:ii}.
  It remains to prove that $M_{P,k+1}$ is $\gamma$-normal.

  Recall our assumption that $\lambda\in \left[-\frac 1{2^{r-1}-1}, \frac 15\right]$ and $(r-1) d \lambda^2 \ge 0.99$.
  By Eq.~\eqref{eqn:lem-boht-robust-a-priori:final-i} and Eq.~\eqref{eqn:lem-boht-robust-a-priori:final-ii}, there exists $C=C(r)>0$ such that
  \begin{align}
    |x_{k+1}-(r-1) d \lambda^2 x_k| &\le C(r-1) d \lambda^2 x_k^2,\\
    |y_{k+1}-(r-1) d \lambda^4 y_k| &\le C x_k^2.
  \end{align}
  If $x_k$ is small enough, then
  \begin{align}
    x_{k+1} \ge ((r-1) d \lambda^2 - 0.01) x_k \ge 0.98 x_k.
  \end{align}
  On the other hand,
  \begin{align}
    y_{k+1} &\le (r-1) d \lambda^4 y_k + C x_k^2 \\
    \nonumber &\le (r-1) d \lambda^4 \gamma x_k^2 + C x_k^2 \\
    \nonumber &\le \lambda^2 \gamma x_k^2 + C x_k^2 \\
    \nonumber &\le 0.5 \gamma x_k^2 + C x_k^2,
  \end{align}
  where the last step uses the assumption that $|\lambda| \le \max\left\{\frac 1{2^{r-1}-1}, \frac 15\right\}$.
  Note that $C$ does not depend on the choice of $\gamma$.
  Therefore we can take $\gamma$ large enough such that $0.5\gamma + C \le \gamma \cdot 0.98^2$.
  In this way, $y_{k+1} \le \gamma x_{k+1}^2$.
\end{proof}

\section{Proofs of remaining results} \label{sec:remain}
In this section we prove our remaining main results, Corollary~\ref{coro:hsbm-upper} (non-tightness of the KS threshold), Theorem~\ref{thm:boht-asymp} (asymptotic BOHT reconstruction threshold) and Theorem~\ref{thm:hsbm-asymp} (asymptotic HSBM weak recovery bounds).

\subsection{Non-tightness of the KS threshold} \label{sec:remain:hsbm-upper}
\begin{proof}[Proof of Corollary~\ref{coro:hsbm-upper}]
  For $\lambda\in \left[-\frac 1{2^{r-1}-1}, 1\right]$, define
  \begin{align} \label{eqn:proof-coro-hsbm-upper:f}
    f_r(\lambda) = d_{\KL}\left( \lambda + \frac{1-\lambda}{2^{r-1}} \left\| \frac 1{2^{r-1}}\right. \right).
  \end{align}
  By Theorem~\ref{thm:hsbm-upper}, if for some $(r,\lambda)$ we have
  \begin{align} \label{eqn:proof-coro-hsbm-upper:cond}
    f_r(\lambda) > r(r-1)\lambda^2 \log 2,
  \end{align}
  then for this $(r,\lambda)$ the KS threshold is not tight for HSBM weak recovery.

  For $r=5,6$, we take $\lambda_0$ as in Table~\ref{tab:hsbm-upper} and verify that Eq.~\eqref{eqn:proof-coro-hsbm-upper:cond} holds for all $\lambda\in \left[-\frac 1{2^{r-1}-1}, \lambda_0\right)$. Therefore the KS threshold is not tight for such $(r,\lambda)$.

  It remains to prove that for all $r\ge 7$, there exists $\lambda_0(r)>0$ such that Eq.~\eqref{eqn:proof-coro-hsbm-upper:cond} holds for all $\lambda\in \left[-\frac 1{2^{r-1}-1}, \lambda_0\right)$.
  For $\lambda\in \left[-\frac 1{2^{r-1}-1},0\right)$, we apply Lemma~\ref{lem:d-kl-technical} with $x=\lambda + \frac{1-\lambda}{2^{r-1}}$, $y=\frac 1{2^{r-1}}$ and get
  \begin{align}
    f_r(\lambda) \ge \frac{2^{r-1}-1}{2} \lambda^2 > r(r-1) \lambda^2 \log 2
  \end{align}
  where the last step holds for all $r\ge 7$.

  For $\lambda>0$, we observe that
  \begin{align} \label{eqn:proof-coro-hsbm-upper:f-lim}
    \lim_{\lambda\to 0} \frac 1{\lambda^2} f_r(\lambda) = f''_r(0) = \frac{2^{r-1}-1}{2}.
  \end{align}
  So for $r\ge 7$, there exists $\lambda_0=\lambda_0(r)>0$ such that Eq.~\eqref{eqn:proof-coro-hsbm-upper:cond} holds for all $\lambda\in \left(0, \lambda_0\right)$.
  Combining both cases we finish the proof.
\end{proof}

\begin{lemma} \label{lem:d-kl-technical}
  For $0\le x\le y\le \frac 12$, we have
  \begin{align}
    d_{\KL}\left(x\| y\right) \ge \frac{(x-y)^2}{2y(1-y)}.
  \end{align}
\end{lemma}
\begin{proof}
  We first consider the case $x>0$.
  Let $t=y-x$. Then
  \begin{align}
    d_{\KL}\left(x\| y\right) &= x \log \left( 1 - \frac{t}{y}\right) + (1-x) \log \left( 1 + \frac {t}{1-y}\right) \\
    \nonumber &= (-y+t) \sum_{k\ge 1} \frac 1k \left(\frac{t}y\right)^k
    + (1-y+t) \sum_{k\ge 1} (-1)^{k-1}\frac 1k \left(\frac{t}{1-y}\right)^k \\
    \nonumber &= \left(-t+\sum_{k\ge 2} \frac {t^k y^{-k+1}}{k(k-1)}\right)  + \left( t + \sum_{k\ge 2} (-1)^k \frac{t^k (1-y)^{-k+1}}{k(k-1)} \right)\\
    \nonumber &= \sum_{k\ge 2} \frac {t^k}{k(k-1)} \left( y^{-k+1} + (-1)^k (1-y)^{-k+1} \right).
  \end{align}
  Because $y\le \frac 12$, the summand is non-negative for all $k\ge 2$.
  Taking $k=2$, we get
  \begin{align}
    d_{\KL}\left(x\| y\right) \ge \frac 12 t^2 \left( y^{-1} + (1-y)^{-1}\right) = \frac {(y-x)^2}{2y(1-y)}.
  \end{align}

  The case $x=0$ follows by continuity.
\end{proof}

\subsection{Asymptotic BOHT reconstruction threshold} \label{sec:remain:boht-asymp}
\begin{proof}[Proof of Theorem~\ref{thm:boht-asymp}]
  \textbf{Upper bound.}
  Let $w=w^*+\epsilon$.
  By Lemma~\ref{lem:boht-monotone}, it suffices to prove for $(r-1) d \lambda^2 = w$.
  By Lemma~\ref{lem:large-step-g-property}\ref{item:lem-large-step-g-property:i} and definition of $w^*$, there exists $x_0\in (0,1]$ such that $g_{r,w}(x_0) > x_0$.
  Let $\epsilon = g_{r,w}(x_0)-x_0$.
  Take $d_0=d_0(r,\epsilon)$ in Prop.~\ref{prop:large-deg-asymp}.
  By Prop.~\ref{prop:large-deg-asymp}, for $d\ge d_0$ and $(r-1) d \lambda^2 = w$, for any BMS channel $P$ with $C_{\chi^2}(P) \ge x_0$, we have
  \begin{align}
    C_{\chi^2}(\BP(P)) \ge g_{r,w}(C_{\chi^2}(P))-\epsilon \ge g_{r,w}(x_0)-\epsilon = x_0,
  \end{align}
  where the second step is by Lemma~\ref{lem:large-step-g-property}\ref{item:lem-large-step-g-property:i}.
  Therefore $\lim_{k\to \infty} C_{\chi^2}(M_k) \ge x_0>0$ and reconstruction is possible.

  \textbf{Lower bound.}
  Let $w=w^*-\epsilon$.
  By Lemma~\ref{lem:large-step-g-property}\ref{item:lem-large-step-g-property:i} and definition of $w^*$, we have $g_{r,w}(x)<x$ for all $x\in (0,1]$.
  By the same proof as Prop.~\ref{prop:large-deg-asymp-r56}, for any $\epsilon'>0$ there exists $d_0=d_0(r,\epsilon')$ such that $\lim_{k\to \infty} C_{\chi^2}(M_k) < \epsilon'$.
  The rest of the proof is the same as the proof of Theorem~\ref{thm:boht-r56}.
\end{proof}

\subsection{Asymptotic HSBM weak recovery bounds} \label{sec:remain:hsbm-asymp}
\begin{proof}[Proof of Theorem~\ref{thm:hsbm-asymp}\ref{item:thm-hsbm-asymp:lower}]
  We prove that $w^*(r) \ge \frac 1{2^{r-2}}$, where $w^*(r)$ is defined in Eq.~\eqref{eqn:thm-boht-asymp-wstar}. That is, for $w=\frac 1{2^{r-2}}$, $g_{r,w}(x)\le x$ for all $0\le x\le 1$.
  By Lemma~\ref{lem:large-step-g-property}\ref{item:lem-large-step-g-property:ii}, $g_{r,w}(x) \le s_{r,w}(x)$.
  Therefore it suffices to prove that $s_{r,w}(x)\le x$.
  Note That
  \begin{align}
    \frac{s_{r,w}(x)}x = w \cdot \sum_{0\le i\le \left\lfloor\frac{r-1}2\right\rfloor} \binom{r-1}{2i+1} x^{2i}
  \end{align}
  is a polynomial with non-negative coefficients.
  Therefore for $0\le x\le 1$, $\frac{s_{r,w}(x)}x$ is maximized at $x=1$, taking value $w 2^{r-2} = 1$.
  So $w^*(r) \ge w$.
  Applying Theorem~\ref{thm:boht-asymp} and {\cite[Theorem 5.15]{gu2023channel}} we finish the proof.
\end{proof}

\begin{proof}[Proof of Theorem~\ref{thm:hsbm-asymp}\ref{item:thm-hsbm-asymp:upper}]
  Recall function $f_r$ defined in Eq.~\eqref{eqn:proof-coro-hsbm-upper:f}.
  By Eq.~\eqref{eqn:proof-coro-hsbm-upper:f-lim}, for any $\delta>0$, there exists $\lambda_0=\lambda_0(r,\delta)>0$ such that for all $|\lambda|\le \lambda_0$, we have $f_r(\lambda) > \left(\frac{2^{r-1}-1}2-\delta\right) \lambda^2$.
  Furthermore, because $f_r(\lambda)$ is monotone increasing for $\lambda>0$ and monotone decreasing for $\lambda<0$, we can take $\lambda_0$ such that $f_r(\lambda) > \left(\frac{2^{r-1}-1}2-\delta\right) \min\{\lambda^2,\lambda_0^2\}$ for all $\lambda\in \left[-\frac 1{2^{r-1}-1}, 1\right]$.

  Now fix $\epsilon>0$.
  Take $\delta>0$ such that $\left(\frac{2r\log 2}{2^{r-1}-1} + \epsilon\right) \cdot r^{-1}\left(\frac{2^{r-1}-1}{2}-\delta\right) = \log 2$.
  Take $\lambda_0 = \lambda_0(r,\delta)$.
  Take $d_0 = \lambda_0^{-2} \left(\frac{2r\log 2}{2^{r-1}-1} + \epsilon\right)$.
  Then for any $d\ge d_0$ and $d \lambda^2 \ge \frac{2r\log 2}{2^{r-1}-1} + \epsilon$, we have
  \begin{align}
    \frac dr \cdot f_r(\lambda) > \frac dr \cdot \left(\frac{2^{r-1}-1}2-\delta\right) \min\{\lambda^2,\lambda_0^2\}
    = \frac{d \min\{\lambda^2, \lambda_0^2\}\log 2}{\frac{2r\log 2}{2^{r-1}-1} + \epsilon} \ge \log 2.
  \end{align}
  By Theorem~\ref{thm:hsbm-upper}, we finish the proof.
\end{proof}

\section{Discussions} \label{sec:discuss}


\subsection{Comparison with the \texorpdfstring{$q$}{q}-community SBM} \label{sec:discuss:q-sbm}
The HSBM is a generalization of the two-community SBM to hypergraphs. Another natural generalization is the $q$-community SBM ($q$-SBM). In this model, $n$ vertices are given i.i.d.~$\Unif([q])$ labels $X_v$. An edge $(u,v)$ is added with probability $\frac an$ if $X_u=X_v$, and with probability $\frac bn$ if $X_u\ne X_v$. The goal of the weak recovery problem is to recover a non-trivial fraction of the labels given the generated graph.

For the $q$-SBM, define
\begin{align}
  d = \frac{a+(q-1)b}q, \qquad \lambda=\frac{a-b}{a+(q-1)b}.
\end{align}
The Kesten-Stigum threshold is at $d \lambda^2=1$. It plays a similar role in the $q$-SBM as in the HSBM: weak recovery is always possible above the KS threshold, and this is sometimes, but not always, tight.

\begin{table}[ht]
  \centering
  \begin{tabular}{|c|c|c|c|} \hline
    & any $d$ &  small $d$ & large $d$\\ \hline
    $q=3$, $\lambda>0$ & unknown & unknown & tight \\
    $q=3$, $\lambda<0$ & unknown & unknown & tight \\
    $q=4$, $\lambda>0$ & unknown & unknown & tight \\
    $q=4$, $\lambda<0$ & unknown & not tight & tight \\
    $5\le q\le 10$, $\lambda>0$ & unknown & unknown & unknown \\
    $5\le q\le 10$, $\lambda<0$ & unknown & not tight & unknown \\
    $q\ge 11$, $\lambda>0$ & unknown & unknown & not tight \\
    $q\ge 11$, $\lambda<0$ & never tight & (not tight) & (not tight) \\ \hline
  \end{tabular}
  \caption{Tightness of the KS threshold for symmetric $q$-SBM weak recovery. An entry in parentheses means it is implied by another entry in the same row. References: Possibility above KS: \cite{abbe2015detection}. Tightness results: \cite{mossel2023exact}. Non-tightness results: \cite{abbe2015detection} (all non-tightness results) and \cite{banks2016information} (for $q\ge 5$).}
  \label{tab:tightness-ks-q-sbm}
\end{table}

Our current knowledge about tightness of the KS threshold for $q$-SBM weak recovery is summarized in Table~\ref{tab:tightness-ks-q-sbm}.
Comparing Table~\ref{tab:tightness-ks} and Table~\ref{tab:tightness-ks-q-sbm}, we make a few interesting observations.
In both models, in the large $d$ regime, as $r$ and $q$ goes larger, the KS threshold goes from tight to non-tight. However, the HSBM has a small $d$ regime ($\lambda\ge \frac 15$) where KS is tight for any $r$, but the $q$-SBM is not known to have such a regime.

For the $4$-SBM, it is conjectured that tightness of the KS threshold is different for $\lambda>0$ and $\lambda<0$ cases. It is known that for $\lambda<0$ and small $d$, KS is not tight, and conjectured that for $\lambda>0$, KS is always tight. We expect similar behavior to hold for the HSBM with $r=5,6$.

Unlike the HSBM case, we know more about tightness of the KS threshold for $q$-ary Potts model on a tree than the $q$-SBM. For $q\ge 5$, it is known that the KS threshold is never tight (\cite{sly2011reconstruction,mossel2023exact}) for the Potts model. However, as shown in Table~\ref{tab:tightness-ks-q-sbm}, there are several regimes with $q\ge 5$ where the KS threshold is not known to be not tight for the $q$-SBM.

\subsection{Methods for reconstruction on trees and hypertrees} \label{sec:discuss:non-recon}
As noted in \cite{mossel2004survey}, there are a number of different methods for establishing the reconstruction threshold for the Ising model, but few, if any, generalize to other BOT problems. After that, new methods are developed for proving bounds on the reconstruction threshold.

For the upper bound (reconstruction), the Kesten-Stigum bound is simple and works universally. When it is not tight, upper bounds are proved by establishing non-contraction properties.
Suppose for a certain non-trivial channel $P$, we have $\BP(P)\ge P$, where $\ge$ is a certain channel preorder that respects the $\BP$ operator (which is often the degradation preorder), then by iterating we get $\BP^k(\Id) \ge P$ for all $k\ge 0$, and reconstruction is possible.
\cite{sly2009reconstruction} used freezing probability as the information measure to prove reconstruction results for the random coloring model, which can be viewed as taking $\ge$ to be the degradation preorder and $P$ to be an erasure channel.
\cite{sly2011reconstruction,mossel2023exact} used $\chi^2$-capacity (also called magnetization) to prove reconstruction results for the Potts model in the large degree regime (via the large degree asymptotics method). This was also used in \cite{liu2019large} for the asymmetric Ising model and in \cite{gu2023weak} for the BOHT model. The current work uses this method in Theorem~\ref{thm:boht-asymp}.
Sometimes the channel $P$ is designed explicitly, such as \cite{sly2016reconstruction,zhang2017phase} which established lower order terms of the reconstruction threshold for the random coloring model and the NAE-$k$-SAT model (in other words, the BOHT model with $\lambda=-\frac 1{2^{r-1}-1}$).

For the lower bound (non-reconstruction), there are two particularly successful methods.
One method proves contraction of certain information measures using strong data processing inequalities (SDPIs) and subadditivity. \cite{borgs2006kesten} proved tightness of the KS threshold for roughly symmetric Ising models using $\chi^2$-information. \cite{kulske2009symmetric} proved non-reconstruction results for Potts model using this method with symmetric KL (SKL) information, and \cite{gu2023non} obtained improved results using mutual information.
\cite{gu2023weak} proved non-reconstruction results for the BOHT model using a multi-terminal generalization of the method with SKL information and $\chi^2$-information.
The advantage of this method is that it is often simple to obtain, works for any degree, and can be easily generalized to different tree structures (non-Galton-Watson trees). However, the choice of the information measure is mysterious and it is unclear what is the best information measure to use.

The other method is Sly's method, which uses large degree asymptotics and robust reconstruction. This method was introduced in \cite{sly2009reconstruction,sly2011reconstruction} for the random coloring model and the Potts model. \cite{liu2019large} used the method for the asymmetric Ising model. \cite{mossel2023exact} improved this method by analyzing more orders in the Taylor expansion, and removing the need for a certain concentration result. The current work uses a hypertree verion of Sly's method in Theorem~\ref{thm:boht-r56}. The advantage of this method is that it often gives tight results. The disadvantage is that it needs a large degree assumption and requires heavy computation.

\subsection{Further directions}
\paragraph{Tightness of the KS threshold}
The obvious open question is to fill in the blanks of Table~\ref{tab:tightness-ks} and Table~\ref{tab:tightness-ks-q-sbm}. It could be possible that using a clever information measure in the SDPI method gives some tight non-reconstruction results. However, it is unclear what could be such an information measure, other than mutual information, $\chi^2$-information, and SKL information.

For $r=5,6$ and $\lambda>0$, Table~\ref{tab:tightness-ks} shows that the KS threshold is tight for small $d$ and large $d$. If there is a certain kind of convexity property, then we could conclude that KS is tight for all $d$.

For $r\ge 7$ and $\lambda>0$, Table~\ref{tab:tightness-ks} shows that the KS threshold is tight for small $d$ and not tight for large $d$. Therefore, it seems possible that there exists $d^*=d^*(r)$ for which KS is tight for $d<d^*$ and not tight for $d>d^*$.

\paragraph{Asymptotic HSBM weak recovery bounds}
Theorem~\ref{thm:hsbm-asymp} gives asymptotic bounds on the weak recovery threshold, where there is a factor $\Theta(r)$ gap.
In \cite{banks2016information}, similar asymptotic bounds were obtained for the $q$-SBM, and the gap there is of a factor of two.
Reducing the gaps in the asymptotic bounds for HSBM is an interesting question.
Evidences suggest that the upper bound (Theorem~\ref{thm:hsbm-asymp}\ref{item:thm-hsbm-asymp:upper}) might be closer to the true threshold as $r\to \infty$. The lower bound (Theorem~\ref{thm:hsbm-asymp}\ref{item:thm-hsbm-asymp:upper}) could be not tight for two reasons. One is that the HSBM weak recovery threshold may be different from the BOHT reconstruction threhsold; the other is that the constant $\frac 1{2^{r-2}}$ is not an asymptotically tight bound of $w^*(r)$ in Eq.~\eqref{eqn:thm-boht-asymp-wstar}. We choose $\frac 1{2^{r-2}}$ so that the statement of Theorem~\ref{thm:hsbm-asymp}\ref{item:thm-hsbm-asymp:upper} works for any $r\ge 3$ rather than as $r\to \infty$.

\paragraph{Non-uniform HSBM}
An interesting direction of generalization is the non-uniform HSBMs, where hyperedges may have different sizes. This model has a sequence of non-negative real parameters $(a_r,b_r)_{r\ge 2}$ with finitely many non-zero entries (not depending on $n$, the number of vertices). First, the vertices are given i.i.d.~$\Unif(\{\pm\})$ labels.
Then each set $S$ of vertices with $|S|\ge 2$ is added as an hyperedge with probability $\frac{a_{|S|}}{\binom n{|S|-1}}$ or $\frac{b_{|S|}}{\binom n{|S|-1}}$ depending on whether the set $S$ is monochromatic. The KS threshold is defined as $\sum_{r\ge 2} (r-1) d_r \lambda_r^2 = 1$ where $d_r$ and $\lambda_r$ are defined from $a_r$ and $b_r$ as in Eq.~\eqref{eqn:hsbm-params-d-lambda}. \cite{chodrow2023nonbacktracking} conjectured that weak recovery is possible above the KS threshold, and partial progress was made in \cite{dumitriu2021partial}. By analyzing the corresponding non-uniform BOHT model, one could show that weak recovery is impossible below the KS threshold under certain conditions. For example, by adapting the multi-terminal SDPI method from \cite{gu2023weak}, one can show that if $a_r\ge b_r$ for $r\le 4$ and $a_r=b_r=0$ for $r\ge 5$, then weak recovery is impossible below the KS threshold. Furthermore, our robust reconstruction result (Theorem~\ref{thm:boht-robust}) seems to still hold in the non-uniform case.
On the other hand, by adapting our proof of Theorem~\ref{thm:hsbm-upper} one could prove that weak recovery is possible whenever
\begin{align}
  \sum_{r\ge 2} \frac {d_r} r \cdot d_{\KL}\left( \lambda_r + \frac{1-\lambda_r}{2^{r-1}} \left\| \frac 1{2^{r-1}} \right. \right) > \log 2.
\end{align}

\end{document}